\DeclareMathOperator{\Mod}{Mod}
\DeclareMathOperator{\Cantor}{Cantor}
\begin{document}

\newtheorem{theorem}{Theorem}[subsection]
\newtheorem{lemma}[theorem]{Lemma}
\newtheorem{corollary}[theorem]{Corollary}
\newtheorem{conjecture}[theorem]{Conjecture}
\newtheorem{proposition}[theorem]{Proposition}
\newtheorem*{proposition*}{Proposition}
\newtheorem{question}[theorem]{Question}
\newtheorem*{answer}{Answer}
\newtheorem{problem}[theorem]{Problem}
\newtheorem*{simplex_theorem}{Superideal Simplex Theorem}
\newtheorem*{claim}{Claim}
\newtheorem*{criterion}{Criterion}
\theoremstyle{definition}
\newtheorem{definition}[theorem]{Definition}
\newtheorem{construction}[theorem]{Construction}
\newtheorem{notation}[theorem]{Notation}
\newtheorem{convention}[theorem]{Convention}
\newtheorem*{warning}{Warning}
\newtheorem*{assumption}{Simplifying Assumptions}
\newtheorem{ju}[theorem]{\textcolor{blue}{Comment (Juliette)}}
\newtheorem{al}[theorem]{\textcolor{blue}{Comment (Alden)}}

\theoremstyle{remark}
\newtheorem{remark}[theorem]{Remark}
\newtheorem{example}[theorem]{Example}
\newtheorem{scholium}[theorem]{Scholium}
\newtheorem*{case}{Case}

\def\Id{\text{Id}}
\def\H{\mathbb H}
\def\Z{\mathbb Z}
\def\N{\mathbb N}
\def\R{\mathbb R}
\def\C{\mathbb C}
\def\CP{{\mathbb {CP}}}
\def\CC{\mathcal C}
\def\HC{\mathcal H}
\def\S{\mathcal S}
\def\Sph{\mathbb S}
\def\P{\mathcal P}
\def\Q{\mathbb Q}
\def\L{\mathcal L}
\def\A{\mathcal A}
\def\E{\mathcal E}
\def\homeo{\textnormal{Homeo}}
\def\inte{\textnormal{int}}
\def\scl{\textnormal{scl}}
\def\loc{\textnormal{loc}}
\def\RG{{\mathcal{R}'}}
\def\RGC{\mathcal{R}}

\newcommand{\marginal}[1]{\marginpar{\tiny #1}}

\title{The Gromov boundary of the ray graph}
\author{Juliette Bavard}
\address{Institut de Math\'ematiques de Jussieu-Paris Rive Gauche, UPMC}
\email{juliette.bavard@imj-prg.fr}
\author{Alden Walker}
\address{Center for Communications Research \\ La Jolla, CA 92121}
\email{akwalke@ccrwest.org}

\begin{abstract}
The ray graph is a Gromov-hyperbolic graph on which the mapping class group of the plane minus a Cantor set acts by isometries. We give a description of the Gromov boundary of the ray graph in terms of cliques of long rays on the plane minus a Cantor set.  As a consequence, we prove that the Gromov boundary of the ray graph is homeomorphic to a quotient of a subset of the circle.
\end{abstract}

\maketitle

\setcounter{tocdepth}{1}
\tableofcontents

\section{Introduction}

\subsection{Background}

Let $S = \R^2-\Cantor$ be the plane minus a Cantor set.  The mapping class group $\Mod(S)$ arises naturally in dynamics problems of group actions on the plane and through \emph{artinization} of groups acting by homeomorphisms on Cantor sets (see for example \cite{Calegari-blog}, \cite{Funar-K-S}).  The curve graph and curve complex are important tools in the study of mapping class groups of surfaces of finite type.  However, the standard definition of the curve graph for $S$ yields a graph with finite diameter since any two closed curves are disjoint from a sufficiently small loop.  Therefore, we must find a more suitable object of study which usefully captures the action of $\Mod(S)$.

Such an object, the \emph{ray graph}, was defined by Danny Calegari in \cite{Calegari-blog}.  The mapping class group $\Mod(S)$ naturally acts on the ray graph, and the first author proved in~\cite{Juliette} that it has infinite diameter and is Gromov-hyperbolic. Hence, this graph can play a similar role to curve complexes for finite type surfaces.

Erica Klarreich gives in \cite{Klarreich} a description of the Gromov boundary of curve complexes of finite topological type surfaces in terms of minimal filling laminations on the surface (see also \cite{Pho-On} for a more recent proof using infinite unicorn paths). In the case of $S$, complications arise in generalizing this definition as the surface is of infinite type and in particular its lamination space is non compact in the Hausdorff topology. 

Our goal in this paper is to give a simple characterization of the Gromov boundary of the ray graph.

\subsection{Results}

We give a brief overview of our results, leaving precise definitions for Section~\ref{section:completed_ray_graph}.  A \emph{short ray} is a simple arc embedded in $S$ up to isotopy connecting infinity to a point on the Cantor set.  The \emph{ray graph} is the graph whose vertices are short rays and whose edges connect pairs of disjoint rays.  The mapping class group $\Mod(S)$ acts by isometries on the ray graph, which, as proved in \cite{Juliette}, is Gromov hyperbolic and has infinite diameter.

A \emph{long ray} is a simple arc embedded in $S$ which limits to infinity on one end but has no limit point on the other (we define this condition precisely below).  A long ray is more properly called a curve, since an arc implies its domain is closed, but inspired as we are by the arc graph for finite type surfaces, we think of long rays as arcs which accidentally fail to ever end.  A \emph{loop} is an embedded simple arc connecting infinity to itself.  We define a \emph{completed ray graph}, whose vertices are short and long rays and loops and whose edges are pairs of disjoint rays and loops.  We prove that this completed ray graph has infinitely many connected components and describe them (Theorem~\ref{theorem:completed_components}). The main one contains the usual ray graph and is quasi-isometric to it. In particular, it has infinite diameter. The others are \emph{cliques}, i.e. complete sub-graphs of diameter $0$ or $1$. We exhibit a natural bijection between the set of such cliques and the Gromov boundary of the main component, which is also the Gromov boundary of the ray graph.  See Theorem~\ref{theorem:boundary_bijection}.  Thus, roughly speaking, the Gromov boundary of the ray graph is identified with a set of long rays.

Coming from another angle, we describe a specific cover of $S$, called the \emph{conical cover}.  This cover has a circle $S^1$ as a natural boundary, which plays a primary role in our description. Indeed, the main interest of this cover is the existence of a natural injection from the set of vertices of the completed ray graph to this circle boundary. This injection allows us to see the union of the vertices of the cliques as a subset $\mathcal{H}$ of $S^1$. In particular, $\mathcal{H}$ inherits the induced topology of the circle~$S^1$. Quotienting $\mathcal{H}$ by the equivalence relation ``being in the same clique'', we get a quotient space $\mathcal{H} / \sim $  endowed with the quotient topology. We show that the Gromov boundary of the ray graph is homeomorphic to $\mathcal{H} / \sim $.  See Theorem~\ref{theorem:boundary_homeo}.

\subsection{Outline}

In Section~\ref{section:completed_ray_graph}, we define the completed ray graph and the different types of rays that we will encounter.  In Section~\ref{section:unicorn_paths}, we adapt the \emph{infinite unicorn paths} defined by Witsarut Pho-On in \cite{Pho-On} to our situation. Section \ref{section:Gromov_boundaries} reviews Gromov boundaries of non proper Gromov-hyperbolic spaces and provides some necessary technical background relating unicorn paths to quasi-geodesics.  Finally, in Section~\ref{section:bijection}, we describe a bijection between the set of cliques and the Gromov-boundary, and we prove that this bijection is a homeomorphism.

\subsection{Acknowledgements}

We would like to thank Danny Calegari and Fr\'ed\'eric Le Roux for helpful conversations. Juliette Bavard was supported by grants from R\'egion Ile-de-France. Alden Walker was partially supported by NSF grant DMS 1203888.

\section{The completed ray graph}\label{section:completed_ray_graph}

\subsection{The plane minus a Cantor set}

As above, let $S = \R^2 - \Cantor$ be the plane minus a Cantor set.  Note that it does not matter how the removed Cantor set is embedded in the plane, as all embeddings are homeomorphic; see for example \cite{Moise}.  Therefore, we will take the Cantor set to be $K$, the standard middle-thirds Cantor set on the horizontal axis.  In addition, $S$ is homeomorphic to a $2$-sphere minus a Cantor set and a single isolated point $\infty$.  For this reason, we will often abuse notation and think of $S$ as $\Sph^2 - (K \cup\{\infty\})$, where $\Sph^2$ is the Riemann sphere, from which we have removed the above-defined Cantor set $K$ and the point $\infty$.  From this viewpoint, we think of both $K$ and $\infty$ as lying on the equator (real axis) of $\Sph^2$.  The fact that $K$ and $\infty$ divide this equator into infinitely many segments will be important for our purposes.

It will also be useful to fix a complete hyperbolic metric (of the first kind) on $S$.  Figure~\ref{figu:surface_cantor_1} shows the different ways in which we can think about $S$.

\begin{figure}[htb]
\labellist
\pinlabel $\infty$ at 1219 475
\pinlabel $\infty$ at 2144 -25
\endlabellist
\centering
\includegraphics[scale=0.13]{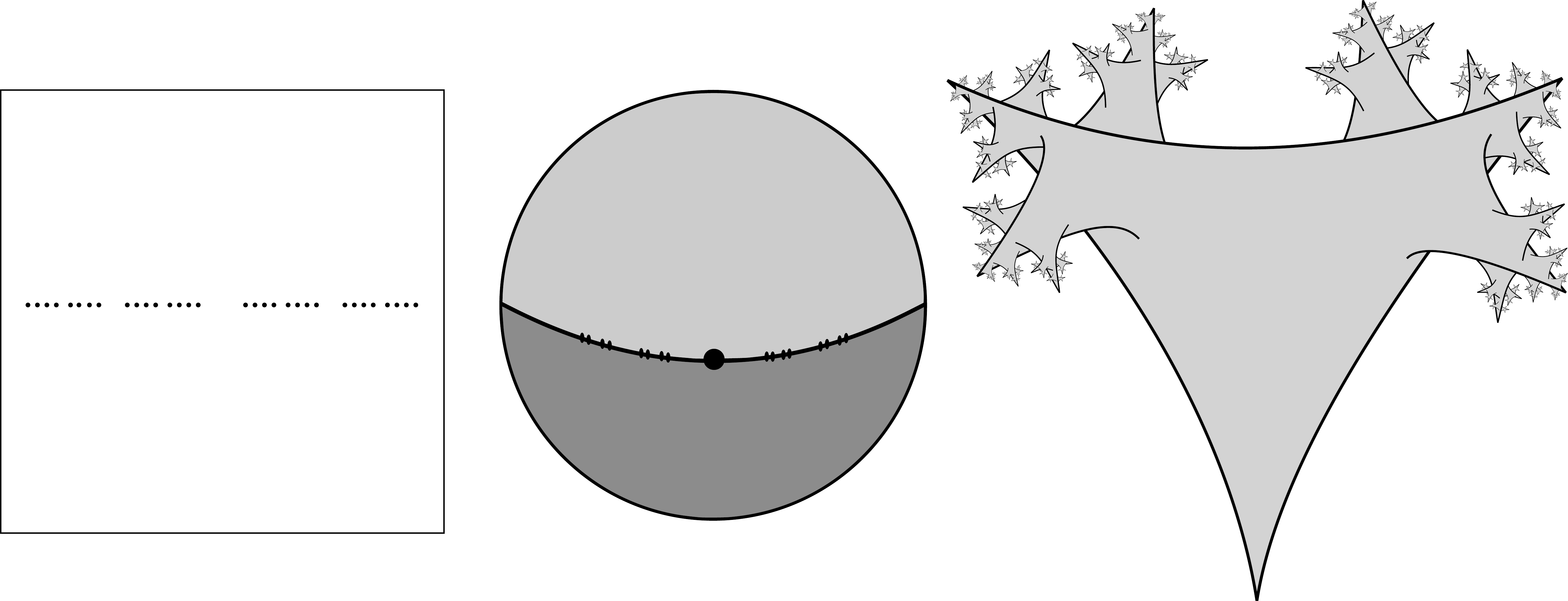}
\vspace{0.3cm}
\caption{Three views of the plane minus a Cantor set: the usual viewpoint, as a sphere minus a Cantor set and a point, and as endowed with a hyperbolic metric}
\label{figu:surface_cantor_1}
\end{figure}

\subsection{The conical cover}

Since $S$ is hyperbolic, we can think of the universal cover of $S$ as the hyperbolic plane.  We won't make particular use of the metric, but it is convenient to know that the boundary of the universal cover is a circle.  We can work directly with the universal cover, but in fact there is a sub-cover which is even more amenable, the \emph{conical cover}, defined as follows.

Think of $S$ as $\Sph^2 - (K \cup\{\infty\})$, and recall that we have chosen an equator in $S$ which contains $\{\infty \} \cup K$. A lift of each hemisphere is a half-fundamental domain (see Figure~\ref{figu:cover}). Choose a lift of $\infty$ on the boundary of the universal cover, and quotient by parabolics around this lift in order to get a cover of $S$ with one special lift $\tilde \infty$ of $\infty$ which bounds only two half fundamental domains, as the cover on the right of Figure~\ref{figu:cover}. Such a cover is said to be a \emph{conical cover}.  We fix a conical cover and denote it by $\tilde{S}$ and its (circle) boundary by $\partial\tilde{S}$.  Call the covering $\pi :\tilde{S} \to S$. See figure \ref{figu:cover2} for two views of a conical cover.

\begin{figure}[htb]
\labellist
\pinlabel $\tilde \infty$ at 84 386 
\pinlabel $\tilde \infty$ at 266 275
\pinlabel $\infty$ at 86 49
\pinlabel $S$ at 20 40
\pinlabel $\tilde S$ at 280 220
\pinlabel $\pi$ at 190 120
\endlabellist
\centering
\vspace{0.2cm}
\includegraphics[scale=0.8]{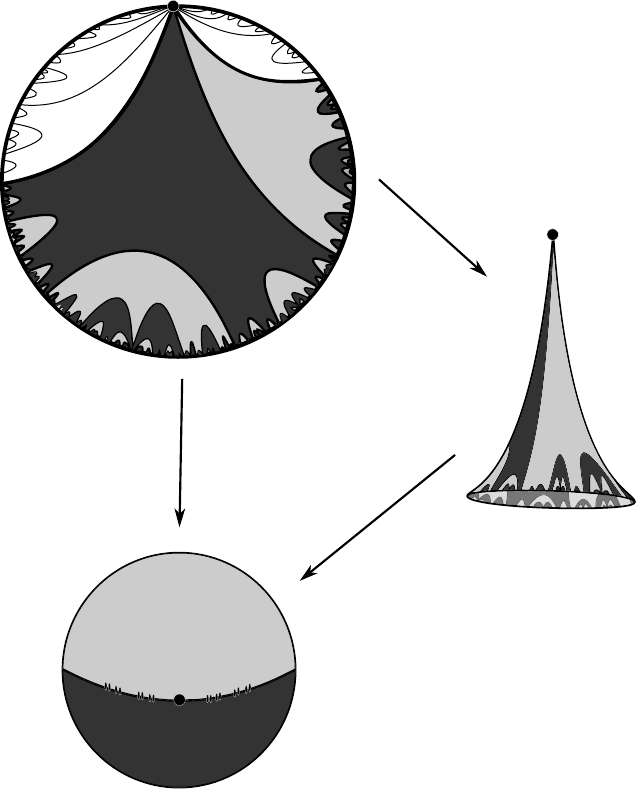}
\caption{The universal cover and a \emph{conical} cover $\tilde S$ of $\Sph^2 -(\{\infty \} \cup K)$}
\label{figu:cover}
\end{figure}

\begin{figure}[htb]
\labellist
\pinlabel $\tilde \infty$ at 42 141
\pinlabel $\tilde \infty$ at 252 65
\endlabellist
\centering
\vspace{0.3cm}
\includegraphics[scale=0.8]{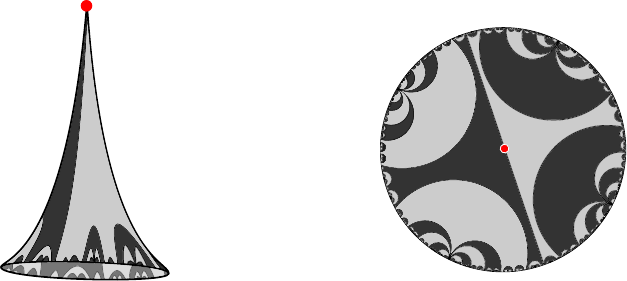}
\caption{Two views of a conical cover: one as a cone, and the other one seen from the top -- we will mainly use the second one}
\label{figu:cover2}
\end{figure}

\subsection{Different types of rays}

\subsubsection{Short rays and loops}

An embedding $\gamma$ of the segment $]0,1[$ in $S$ is said to be:
\begin{itemize}
\item A \emph{loop} if it can be continuously extended in $\Sph^2$ by $\gamma(0)=\gamma(1)=\{\infty\}$.
\item A \emph{short ray} if it can be continuously extended in $\Sph^2$ by $\gamma(0)=\{\infty\}$ and $\gamma(1) \in K$.
\end{itemize}

We are interested in loops and short rays only up to isotopy.  Hereafter, we conflate a short ray or loop with its isotopy equivalence class.  We say that two short rays or loops are disjoint if there are disjoint rays or loops in their equivalence class (i.e. if they can be made disjoint by separate isotopies).  There is a technical issue with loops, which is that we actually consider \emph{oriented} loops.  In practice, this is never an issue, except that we must keep it in mind when we define unicorn paths.  We can take a loop and its reverse to be disjoint.

The conical cover is very useful for understanding short rays and loops because it allows us to make a natural choice of equivalence class representatives, as follows.  Given a representative of a short ray $r$, there is a distinguished lift $\tilde{r}$ from the basepoint $\tilde{\infty}$ in the conical cover.  The ray $\tilde{r}$ limits to some point $p \in \partial\tilde{S}$ (by definition, a lift of the endpoint of $r$ in the Cantor set).  It follows from standard facts about the hyperbolic plane that the equivalence class of $r$ is specified by $p$, and there is a unique geodesic in the equivalence class of $r$ with limit point $p$ which we can choose as the class representative.  Similarly, given a loop, there is a well-defined limit point in $\partial\tilde{S}$ (which is a lift of $\infty$, but not the distinguished lift $\tilde{\infty}$), and we can choose a class representative which is a geodesic in the conical cover.  Note that we conflate this geodesic lift and its image in $S$. Figure~\ref{figu:rays_and_loop} shows examples of a short ray and a loop.

The hyperbolic metric on $\tilde{S}$ is not necessary to decide whether two rays or loops are disjoint.  However, it is very useful because if we choose class representatives which are these geodesics, then these representatives are disjoint if and only if there are any class representatives which are disjoint.  Also, a ray or short loop is simple iff its geodesic representative is simple.  These are standard facts about hyperbolic geodesics.

\subsubsection{Long rays}

We have defined geodesic representatives of short rays and loops.  We observed that short rays and loops are specified by points on the boundary $\partial\tilde{S}$ and that there is a subset of $\partial\tilde{S}$ corresponding to simple short rays and loops.  We could instead start with $\partial\tilde{S}$ and ask: which points $p\in \partial\tilde{S}$ are the endpoints of geodesics whose image under the covering map $\pi:\tilde{S} \to S$ is simple?  We would find the lifts of points in $K$ which are endpoints of simple short rays and the lifts of $\infty$ which are the endpoints of simple loops.  However, we would also find other points on the boundary.  These are the long rays, defined as follows:

\begin{itemize}
\item Let $\tilde\gamma$ be a geodesic ray from $\tilde\infty$ to $\partial\tilde{S}$.  If $\gamma = \pi(\tilde{\gamma})$ is simple and $\gamma$ is not a short ray or loop, then it is a \emph{long ray}.\end{itemize}

Note that again the hyperbolic metric is not necessary for the definition of long rays -- any ray from $\infty$ which lifts to a ray with a well-defined limit point on $\partial\tilde{S}$ is a long ray, but the metric is convenient to pick out a geodesic representative. Figure~\ref{figu:rays_and_loop} gives an example of a long ray.

\begin{figure}[htb]
\centering
\labellist
\pinlabel $\infty$ at 182 433
\pinlabel $\tilde{\infty}$ at 179 148
\pinlabel $p$ at 364 428
\pinlabel $\tilde{p}$ at 278 22

\pinlabel $\infty$ at 560 433
\pinlabel $\tilde{\infty}$ at 557 148
\pinlabel $\tilde{\infty}'$ at 664 224

\pinlabel $\infty$ at 965 433
\pinlabel $\tilde{\infty}$ at 956 146
\endlabellist
\includegraphics[scale=0.3]{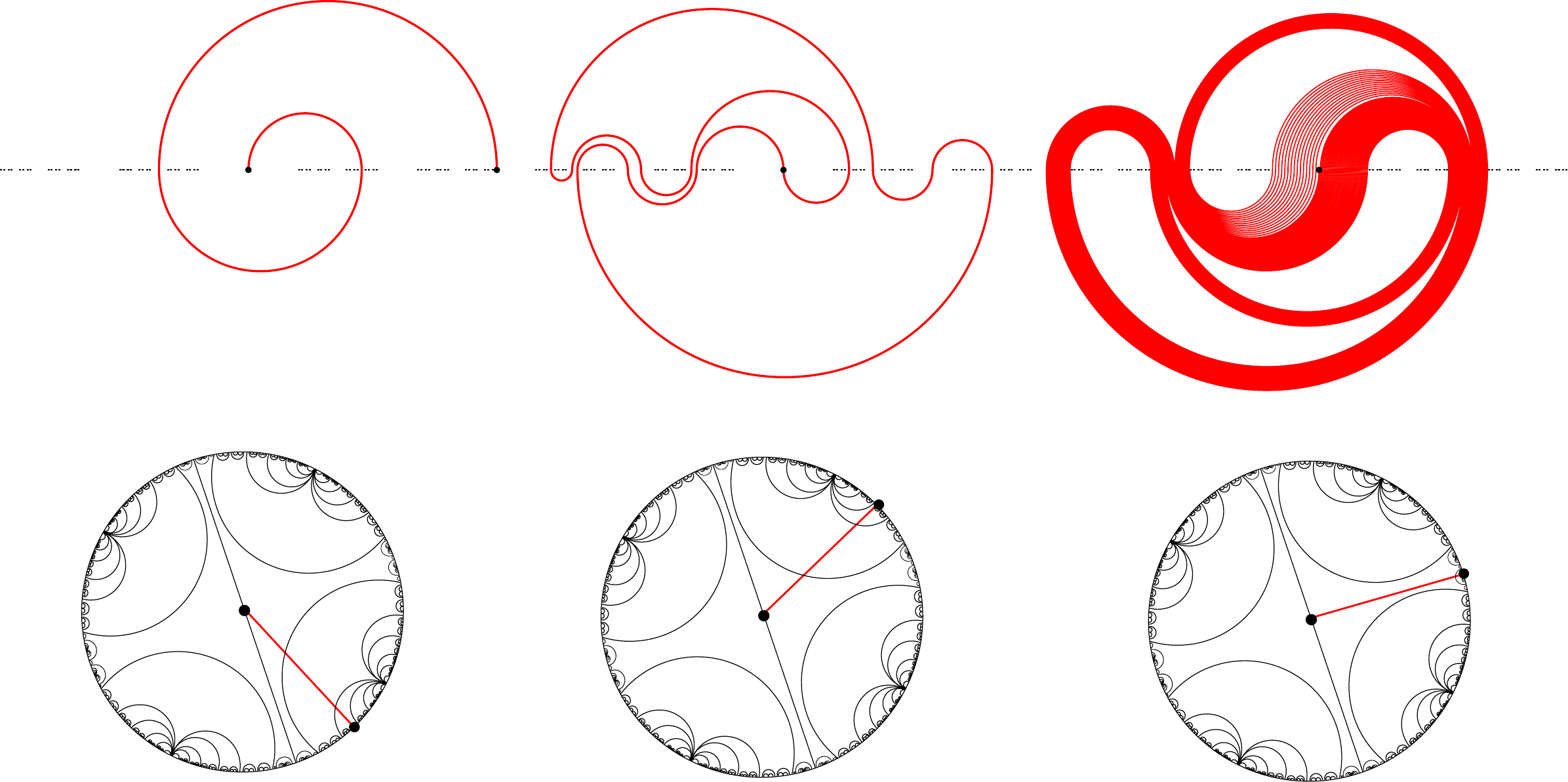}
\vspace{0.2cm}
%\caption{A short ray, a loop, and (the beginning of) a long ray.  We show the lift of the rays and loop from the special infinity lift $\tilde{\infty}$.  Note the short ray ends on a lift of a point in the Cantor set, the loop ends on another lift of $\infty$, and the long ray ends on a boundary point which is neither.  Note the pictures are drawn \emph{on the sphere}, with $\infty$ highlighted in the middle.}
\caption{A short ray, a loop, and (the beginning of) a long ray.  We show the lifts of the rays and loop in the conical cover.  Note the short ray ends on a lift of a point in the Cantor set, the loop ends on another lift of $\infty$, and the long ray ends on a boundary point which is neither.  Note
the pictures are drawn \emph{on the sphere}, with $\infty$ highlighted in the middle.}
\label{figu:rays_and_loop}
\end{figure}

\subsection{Cover-convergence and $k$-beginning}
\label{section:cover_convergence}

The conical cover will be helpful to us especially because it is compact.  We record the idea that rays converge on the boundary.

\begin{definition} We say that a sequence of rays or oriented loops $(x_n)$ \emph{cover-converges} to a geodesic (ray or loop) $l$ on the surface if the sequence of endpoints of the $\tilde{x_n}$ on the boundary of the conical cover converges to a point $p$ such that the image $\pi( (\tilde{\infty} p) )$ of the geodesic $(\tilde{\infty} p)$ by the quotient map of the covering is $l$.
\end{definition}

\begin{lemma}\label{lemma:simple_is_compact}
Let $E \subseteq \partial \tilde{S}$ be the set of endpoints of short rays, loops, and long rays (i.e. the set of all endpoints of geodesics whose projection to $S$ is simple).  Then $E$ is compact.  The endpoints of loops are isolated (are not accumulation points of $E$).
\end{lemma}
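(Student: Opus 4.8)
The plan is to show $E$ is closed in the circle $\partial\tilde{S}$, which suffices since $\partial\tilde{S}$ is compact. Equivalently, I will show that the complement $\partial\tilde{S}\setminus E$ is open: if $p\in\partial\tilde{S}$ is such that the projection $\gamma=\pi((\tilde{\infty}p))$ of the geodesic from $\tilde{\infty}$ to $p$ is \emph{not} simple, then all boundary points sufficiently close to $p$ also give non-simple projections. The key point is that non-simplicity of $\gamma$ is \emph{witnessed at a finite scale}: since $\gamma$ either has a transverse self-intersection or accumulates on itself, there is a compact sub-arc of $\gamma$ (equivalently, a compact portion of the lifted geodesic $(\tilde{\infty}p)$ together with one other lift) that already fails to be embeddable disjointly. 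Concretely, pick a point $x\in S$ that is a double point of $\gamma$, lift $x$ to two distinct points $\tilde{x}_1,\tilde{x}_2$ on two geodesics from lifts of $\tilde\infty$ hitting the boundary; the two geodesic segments from those lift-basepoints to $\tilde{x}_1,\tilde{x}_2$ form a finite ``crossing configuration.''

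Next I would use continuity of the endpoint-to-geodesic correspondence: in the hyperbolic plane (hence in the conical cover), the geodesic $(\tilde{\infty}q)$ depends continuously on $q\in\partial\tilde{S}$, uniformly on compact subsets of $\tilde S$. Therefore for $q$ close to $p$, the geodesic $(\tilde{\infty}q)$ stays uniformly close to $(\tilde{\infty}p)$ on the relevant compact region, and the same applies to the relevant competing lift (which is a $\Gamma$-translate, $\Gamma=\pi_1(S)$ acting on the conical cover — note $q$ near $p$ means the translated point is near the translate of $p$, and translations act continuously on the closed disk). Since transverse intersection is an open condition, the perturbed configuration still crosses, so $\pi((\tilde\infty q))$ still has a self-intersection and hence is not simple. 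This gives $q\notin E$, proving $\partial\tilde{S}\setminus E$ open.

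The one subtlety, and the main obstacle, is the case where $\gamma$ is non-simple not because of a \emph{transverse} double point but because it accumulates on itself (the ``long ray that fails to be embedded'' degenerate case) or is tangent to a translate — one must argue that such a geodesic actually does have a transverse self-crossing, or handle the accumulation directly. For a complete geodesic ray whose projection is not simple, standard hyperbolic-surface geometry gives that it in fact has a transverse double point (a geodesic that is tangent to itself coincides with itself), so the accumulation case forces an honest crossing at finite scale; I would cite this as a standard fact about geodesics on hyperbolic surfaces. Once transversality is secured at a compact witness, the openness argument above closes the proof.

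Wait, let me reconsider whether $E$ is actually closed — cover-convergence (Definition above) is exactly convergence of endpoints, so the lemma is literally the assertion that $E$ is a closed subset of the compact space $\partial\tilde S$; so the above is the right strategy and the displayed proof should confirm it. I will also remark that the limit of a cover-convergent sequence of \emph{simple} geodesics is simple, which is the contrapositive formulation and may be how the authors phrase it; the argument is identical.
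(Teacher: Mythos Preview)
Your proposal is correct and follows essentially the same approach as the paper: both reduce to showing $E$ is closed in the compact circle $\partial\tilde S$, and both argue that a non-simple projection forces a transverse interior intersection of $(\tilde\infty p)$ with another lift of $\pi((\tilde\infty p))$, which persists under small perturbation of the endpoint (the paper phrases this sequentially as ``limit of simple is simple,'' exactly the contrapositive you anticipated). One minor terminological slip: the conical cover is not a regular cover, so speaking of ``$\Gamma$-translates'' with $\Gamma=\pi_1(S)$ is not quite right---just say ``another lift of $\pi(\gamma)$'' as the paper does; this does not affect the argument.
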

\begin{proof}
Since $\partial \tilde{S}$ is compact, to show that $E$ is compact it suffices to show that $E$ is closed.  Let $(\gamma_i)$ be a sequence of geodesic rays in $\tilde{S}$ such that $\pi(\gamma_i)$ is simple (each is a short ray, loop, or long ray).  Suppose that $\gamma_i$ cover-converges to $\gamma$.  To prove the lemma, it suffices to show that $\pi(\gamma)$ is simple.  Suppose not.  Then $\pi(\gamma)$ must have a self-intersection \emph{not} at $\infty$ nor at a point in the Cantor set.  Therefore, $\gamma$ must intersect another lift of $\pi(\gamma)$, and this intersection must occur in the interior of $\tilde{S}$.  Since $\gamma_i$ cover-converges to $\gamma$, we must have $\gamma_i$ converging to $\gamma$ in the Hausdorff metric when $\gamma_i$ and $\gamma$ are thought of as subsets of the disk (analogous to the Poincar\'{e} disk).  Therefore, for $i$ sufficiently large, we must find a similar self-intersection between $\gamma_i$ and another lift of $\pi(\gamma_i)$.  This is a contradiction since all the $\gamma_i$ are simple.

To show that loops are isolated, observe that no simple geodesic ray can cross the equator
adjacent to $\infty$ more than twice (a simple geodesic cannot wrap around $\infty$).
This produces a region about each lift of $\infty$ on $\partial\tilde{S}$ into
which no simple geodesic can enter (except to terminate exactly on that lift of $\infty$).
\end{proof}

\begin{lemma}\label{lemma:cover_converge_limit}
Let $(x_i)$ and $(y_i)$ be sequences of short rays, loops, or long rays such that the pair $x_i$, $y_i$ is disjoint for all $i$.  Suppose that $(x_i)$ cover-converges to $x$ and $(y_i)$ cover-converges to $y$.  Then $x$ and $y$ are disjoint.
\end{lemma}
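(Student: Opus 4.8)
The plan is to mimic the structure of the proof of Lemma \ref{lemma:simple_is_compact}, but now tracking two families of geodesics simultaneously and using a \emph{pair} of lifts rather than a single lift together with one of its translates. Suppose for contradiction that $x$ and $y$ are not disjoint. Since each of $x$, $y$ is a short ray, loop, or long ray, its geodesic representative is simple; so if they are not disjoint then their chosen geodesic representatives in $\tilde S$ (the lifts $\tilde x$ from $\tilde\infty$, and a suitable lift $\tilde y$ of $y$) must cross transversally at some point $q$ in the interior of $\tilde S$ — here I would invoke the same standard hyperbolic-geodesic fact used in the previous lemma, that geodesic representatives realize the geometric intersection number, so non-disjointness of the isotopy classes forces an honest transverse intersection of \emph{some} pair of lifts, which we may take to be $\tilde x$ and a single lift $\tilde y$ of $y$. (If $x$ or $y$ is a loop there is the orientation bookkeeping, but as the authors note this is harmless; and the degenerate possibility $x=y$ is fine since a simple curve is disjoint from itself in the sense defined.)

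Next I would lift the convergence. By definition of cover-convergence, the endpoints of $\tilde x_i$ converge on $\partial\tilde S$ to the endpoint of $\tilde x$; choosing the lifts $\tilde y_i$ of $y_i$ compatibly (i.e. applying to $\tilde y_i$ the deck transformation needed to bring $\tilde y$ near $\tilde x$), their endpoints converge to the endpoint of $\tilde y$. Both $\tilde x_i$ and $\tilde y_i$ emanate from lifts of $\infty$ — $\tilde x_i$ from $\tilde\infty$ itself and $\tilde y_i$ from a fixed lift of $\infty$ — so each is the geodesic joining two boundary points that converge as $i\to\infty$. Hence, viewing everything in the closed disk model of $\tilde S$, $\tilde x_i \to \tilde x$ and $\tilde y_i \to \tilde y$ in the Hausdorff topology on closed subsets of the disk, exactly as in Lemma \ref{lemma:simple_is_compact}.

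Transverse intersections are stable under Hausdorff convergence of geodesics: since $\tilde x$ and $\tilde y$ cross transversally at the interior point $q$, for all sufficiently large $i$ the geodesics $\tilde x_i$ and $\tilde y_i$ also cross near $q$, in the interior of $\tilde S$. Projecting by $\pi$, this gives an intersection point of $x_i = \pi(\tilde x_i)$ and $y_i = \pi(\tilde y_i)$ in $S$ that is not at $\infty$ and not at a Cantor point (it is the image of an interior point of $\tilde S$), and it persists under isotopy because it is a transverse, hence essential, crossing of geodesic representatives. This contradicts the hypothesis that $x_i$ and $y_i$ are disjoint for every $i$, completing the proof.

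The step I expect to be the main obstacle is the first one: justifying cleanly that non-disjointness of the limiting isotopy classes produces a transverse intersection of a \emph{single, canonically chosen} pair of lifts $(\tilde x, \tilde y)$ — where $\tilde x$ is the distinguished lift from $\tilde\infty$ — so that the convergence statement for $\tilde y_i$ can be set up with matched deck transformations. One must be careful that, a priori, the crossing of $x$ and $y$ might be witnessed only by some translate $g\cdot\tilde y$; the fix is to replace $\tilde y$ (and correspondingly each $\tilde y_i$) by $g\cdot\tilde y$ from the outset, which is legitimate since cover-convergence of $y_i$ to $y$ is preserved under applying a fixed deck transformation. Once the right lifts are pinned down, the rest is the same Hausdorff-limit argument as in Lemma \ref{lemma:simple_is_compact}, applied to two sequences instead of a sequence and its translates.
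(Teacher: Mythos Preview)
Your argument is correct and is essentially the paper's own proof, just spelled out in more detail: assume the limits intersect, locate the crossing in the cover, and use stability of a transverse intersection under Hausdorff convergence of geodesics to force $x_i$ and $y_i$ to intersect for large $i$. You are in fact more careful than the paper about one real point it elides: in the conical cover the distinguished lifts $\tilde x$ and $\tilde y$ both emanate from $\tilde\infty$, and two geodesics from a common ideal point never meet in the interior, so the crossing must be witnessed by $\tilde x$ against a \emph{non-distinguished} lift $\tilde y'$ of $y$.

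One small imprecision in your write-up: the conical cover $\tilde S \to S$ corresponds to the cyclic parabolic subgroup $\langle\gamma\rangle\subset\pi_1(S)$, which is its own normalizer in the free group $\pi_1(S)$; hence the deck group of this cover is trivial and cannot be used to carry $\tilde y$ to $\tilde y'$. Your fix still goes through if you run it in the universal cover (where $\pi_1(S)$ does act and your ``fixed $g$'' makes literal sense), or, staying in $\tilde S$, if you simply take $\tilde y_i'$ to be the unique lift of $y_i$ issuing from the same lift of $\infty$ on $\partial\tilde S$ as $\tilde y'$; cover-convergence of $y_i$ to $y$ then gives $\tilde y_i'\to\tilde y'$ as needed.
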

\begin{proof}
By Lemma~\ref{lemma:simple_is_compact}, we know that each of $x$, $y$ is a simple short ray, loop, or long ray, so the only question is whether they are disjoint.  Suppose they are not.  Then thinking of $x$ and $y$ as their representatives in the conical cover $\tilde{S}$, we see they must have a point of intersection not at the boundary.  But by taking $i$ large enough, we can make $x_i$ and $y_i$ have geodesic representatives in $\tilde{S}$ close enough to $x$ and $y$ that they must intersect.  This is a contradiction.
\end{proof}

Note that we have no information about whether $x$ and $y$ are short rays, loops, or long rays, but whatever they are, they are simple and disjoint.

It is also helpful to have a topological or combinatorial notion of cover-convergence.  This is the notion of \emph{$k$-beginning-like}.  

\begin{definition}\label{def:k_begin}
Let us think of $S$ as $\Sph^2 -(\{\infty \} \cup K)$, and recall that $K$ and $\infty$ divide the equator into infinitely many segments.  We say that two oriented rays or loops \emph{$k$-begin} like each other if they cross the same initial $k$ equatorial segments in the same direction.  Note that a loop or long ray is specified uniquely by the sequence of equatorial segments that it crosses, and note that it is important to record in which direction we cross the segments (or equivalently in which hemisphere the ray starts).
\end{definition}

The definition applies even to short rays, but these might not be specified just by their equator crossings.  Figure~\ref{figu:k_begin} gives an example of two rays which $k$-begin like each other.

\begin{figure}[htb]
\labellist
\pinlabel $\infty$ at 184 57
\endlabellist
\centering
\includegraphics[scale=0.55]{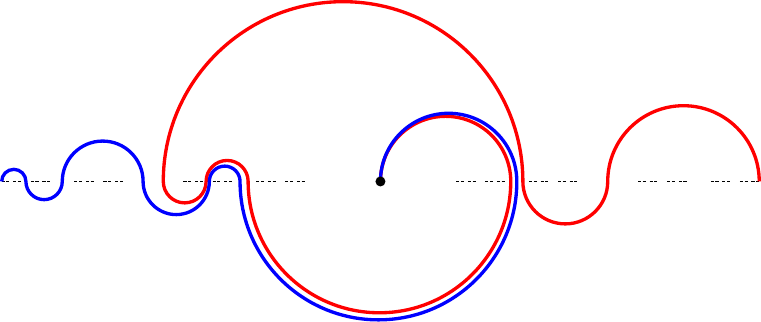}
\caption{Two rays which $4$-begin like each other.}
\label{figu:k_begin}
\end{figure}

\begin{lemma}\label{lemma:cover_converge_iff_k_begin}
Let $(x_i)$ be a sequence of rays or loops and let $x$ be a long ray.  Then $(x_i)$ cover-converges to $x$ if and only if for all $k$ there is an $I$ so that for all $i \ge I$, we have $x_i$ $k$-begins like $x$.
\end{lemma}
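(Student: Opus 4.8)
The plan is to unwind both sides in terms of the boundary circle $\partial\tilde S$ and the partition of that circle coming from lifts of the equatorial segments. Fix the distinguished lift $\tilde\infty$ and recall that the full preimage under $\pi$ of the equator cuts $\tilde S$ into the half-fundamental domains, and correspondingly cuts $\partial\tilde S$ into countably many arcs; a ray or loop through $\tilde\infty$ is determined (at least for loops and long rays, which is the relevant case here) by the sequence of these arcs it meets, i.e.\ by the sequence of equatorial segments it crosses together with the crossing directions. The key geometric observation is that "$x_i$ $k$-begins like $x$" is equivalent to saying that the endpoint of $\tilde{x_i}$ on $\partial\tilde S$ lies in the same depth-$k$ arc of this partition as the endpoint of $\tilde x$. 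So the statement becomes: the endpoints of $\tilde{x_i}$ converge to the endpoint $p$ of $\tilde x$ if and only if for every $k$ the endpoints of $\tilde{x_i}$ are eventually in the same depth-$k$ arc as $p$.

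First I would set up the partition carefully: let $A_k(p)$ denote the arc of $\partial\tilde S$ at combinatorial depth $k$ (determined by the first $k$ equatorial segments crossed, with direction) containing a given boundary point $p$ that is the endpoint of a geodesic crossing at least $k$ segments. I would record two facts. (a) The arcs $A_k(p)$ are nested and their intersection over all $k$ is exactly $\{p\}$ when the corresponding geodesic crosses infinitely many segments — this is precisely the situation when $\pi(\tilde x)$ is a long ray (or a loop going around infinitely often; a long ray crosses infinitely many segments since it has no limit point). Hence $\{A_k(p)\}_k$ is a neighborhood basis of $p$ in $\partial\tilde S$. (b) Membership of a boundary point $q$ in $A_k(p)$ is exactly the condition that the geodesic to $q$ $k$-begins like the geodesic to $p$ — this is just the definition of $k$-begins-like transported through the covering.

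Given (a) and (b), the lemma is formal. For the forward direction: if $x_i$ cover-converges to $x$, then by definition the endpoints $p_i$ of $\tilde{x_i}$ converge to the endpoint $p$ of $\tilde x$ in $\partial\tilde S$; since $A_k(p)$ is an open neighborhood of $p$ (it is an open arc containing $p$ in its interior — here I should note $p$ is not an endpoint of $A_k(p)$ precisely because $\pi(\tilde x)$ crosses more than $k$ segments), there is an $I$ with $p_i\in A_k(p)$ for all $i\ge I$, i.e.\ $x_i$ $k$-begins like $x$. For the converse: if for every $k$ there is $I_k$ with $p_i\in A_k(p)$ for $i\ge I_k$, then since $\{A_k(p)\}$ is a neighborhood basis at $p$, any neighborhood of $p$ eventually contains all $p_i$, so $p_i\to p$; and since $\pi$ of the geodesic to $p$ is $x$ by hypothesis, this says exactly that $x_i$ cover-converges to $x$.

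The main obstacle — really the only subtle point — is establishing (a), that the depth-$k$ arcs shrink to a point and form a neighborhood basis. This needs: (i) the geodesic underlying a long ray crosses infinitely many equatorial segments (otherwise it would eventually stay in one half-fundamental domain and limit to a lift of a Cantor point or of $\infty$, contradicting that it is a long ray — i.e.\ that $\pi(\tilde x)$ has no limit point on the non-infinity end); and (ii) the diameters of the nested arcs $A_k(p)$ tend to $0$, which follows from the fact that the preimage of the equator is a locally finite family of complete geodesics in the disk cutting $\partial\tilde S$ into arcs, so any nested decreasing sequence of such arcs with a common interior point $p$ and crossing "deeper and deeper" has intersection $\{p\}$ — this is the standard fact that ends of a locally finite geodesic lamination in $\H^2$ give a basis for the topology on the circle at infinity. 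I would phrase (ii) using compactness of $\partial\tilde S$: an infinite nested sequence of closed arcs has nonempty intersection, and if that intersection contained a point $q\ne p$ then the geodesic from $\tilde\infty$ to $q$ would cross the same infinitely many segments as the one to $p$, forcing $q=p$ since a long ray (or loop) is determined by its equator-crossing sequence (the parenthetical remark in Definition~\ref{def:k_begin}).
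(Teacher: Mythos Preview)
Your proposal is correct and follows essentially the same approach as the paper, just in far greater detail: the paper's entire proof is the single observation that since $x$ is a long ray its endpoint is not on the boundary of a fundamental domain, so cover-convergence is exactly eventual agreement of fundamental-domain crossings. Your arcs $A_k(p)$ and the neighborhood-basis argument are a careful unpacking of that sentence; the only point the paper makes explicit is your remark that $p$ lies in the interior of each $A_k(p)$, and it leaves the shrinking of the $A_k(p)$ implicit (relying on the parenthetical in Definition~\ref{def:k_begin} that a long ray is determined by its crossing sequence, exactly as you do in your argument for (ii)).
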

\begin{proof}
Because $x$ is a long ray, its endpoint in the boundary of the conical cover is not on the boundary of a fundamental domain of $S$.  Hence cover convergence is equivalent to crossing the same fundamental domains as $x$; that is, $k$-beginning like.
\end{proof}

\subsection{The ray graphs} 

The \emph{ray graph} $\RG$ is the graph whose vertices are short rays up to isotopy and whose edges are pairs of disjoint short rays.  Assigning each edge a length of~$1$ makes $\RG$ into a metric space.  See Figure~\ref{figu:RG_example}.  Since homeomorphisms preserve the property of being disjoint, the mapping class group $\Mod(S)$ acts by isometries on $\RG$.  In \cite{Juliette}, the first author proved that $\RG$ is infinite diameter and hyperbolic.  The analogous \emph{loop graph} defined for loops is, in fact, quasi-isometric to $\RG$.  This relationship is critical in \cite{Juliette}.  We can define yet another graph, the \emph{short-ray-and-loop graph}, whose vertices are both short rays and loops and whose edges join pairs of objects (rays or loops) which are disjoint.

The \emph{completed ray graph} $\RGC$ is the graph whose vertices are short rays, long rays, and loops.  The edges of $\RGC$ are pairs of vertices which can be realized disjointly.  It will be convenient to work in the context of $\RGC$ because we will go back and forth between rays and loops: rays are easier to reason about, and they give a nice description of the Gromov boundary of $\RGC$, but loops are easier to work with than short rays.  Note that $\RG$ is a subgraph of $\RGC$.

\begin{figure}[htb]
\labellist
\pinlabel $\alpha$ at 60 158
\pinlabel $\beta$ at 299 174
\pinlabel $\gamma$ at 152 127

\pinlabel $\alpha$ at 422 95
\pinlabel $\beta$ at 583 95
\pinlabel $\gamma$ at 502 95
\endlabellist
\centering
\includegraphics[scale=0.55]{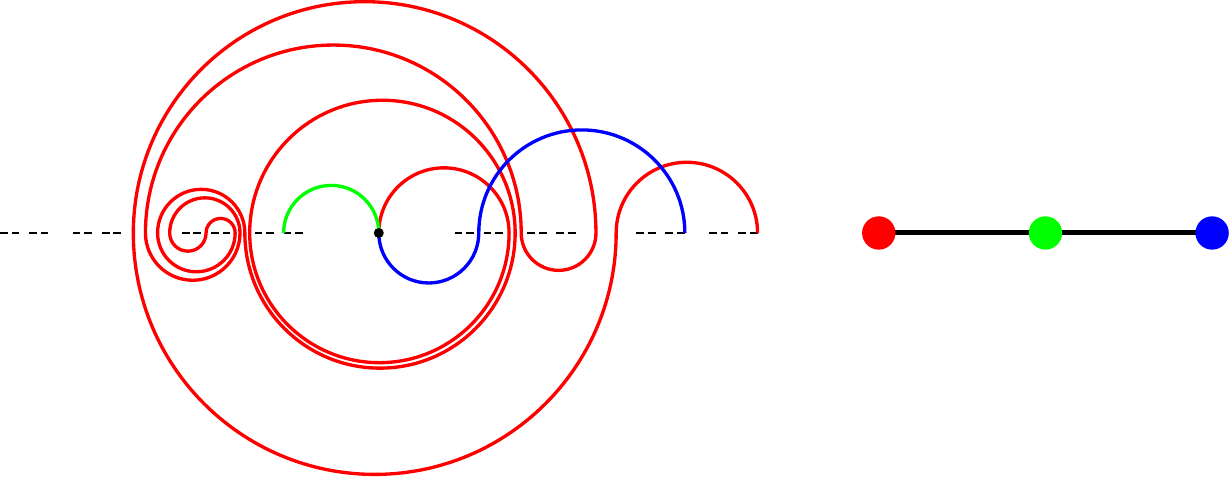}
\caption{Example of three short rays giving a path of length $2$ in the ray graph $\RG$, 
and the corresponding picture in $\RG$.}
\label{figu:RG_example}
\end{figure}

\subsection{Actions}

Although our overarching goal is to understand the mapping class group $\Mod(S)$ of the plane minus a Cantor set, we will not actually be acting by any homeomorphisms in this paper.  Here we are concerned with understanding the boundary of the ray graph $\RG$ so that later we may leverage this understanding to study $\Mod(S)$.  However, it is important to note that $\Mod(S)$ does, in fact, act on $\RGC$.

\begin{lemma}\label{lemma:mod_acts}
The mapping class group $\Mod(S)$ acts by homeomorphisms on the boundary $\partial \tilde{S}$ of the conical cover.  It acts by isometries on $\RG$ and $\RGC$.
\end{lemma}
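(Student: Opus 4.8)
The plan is to treat the two assertions separately, the first being essentially formal and the second requiring some care about the circle at infinity.

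For the action on $\RG$ and $\RGC$: any homeomorphism $\phi$ of $S$ extends to a homeomorphism of $\Sph^2$ preserving $K\cup\{\infty\}$ setwise, and since $\infty$ is the unique isolated point of $K\cup\{\infty\}$ (equivalently, $S$ has a unique isolated end), the extension must fix $\infty$ and preserve $K$. Hence $\phi$ carries short rays to short rays, loops to loops, and --- because being a long ray is the purely topological condition of being a simple arc which limits to $\infty$ on one end and has no limit point on the other --- long rays to long rays. Disjointness being topological, $\phi$ induces an automorphism of the graph $\RGC$ restricting to an automorphism of the subgraph $\RG$; graph automorphisms preserve the path metric, so $\phi$ acts by isometries on both. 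Finally, isotopic homeomorphisms act identically on isotopy classes of rays and loops, so the action factors through $\Mod(S)$.

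For the action on $\partial\tilde S$: I would realize $\tilde S$ as $\H^2/\langle p\rangle$, where $\Gamma=\pi_1(S)$ is the deck group of the universal cover (a Fuchsian group of the first kind by our choice of metric) and $p\in\Gamma$ is the parabolic fixing the chosen lift $\tilde\infty\in\partial\H^2$, so that $\partial\tilde S=(\partial\H^2\setminus\{\tilde\infty\})/\langle p\rangle$. Given $\phi$, pick a lift $\hat\phi$ of $\phi$ to $\H^2$ normalizing $\Gamma$; since $\phi$ fixes the end $\infty$, conjugation by $\hat\phi$ preserves the $\Gamma$-conjugacy class of $\langle p\rangle$, so after composing $\hat\phi$ with a suitable deck transformation we may assume $\hat\phi\langle p\rangle\hat\phi^{-1}=\langle p\rangle$. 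Then $\hat\phi$ fixes $\tilde\infty$, the unique fixed point of $p$, and descends to a homeomorphism $\tilde\phi$ of $\tilde S$; it remains to extend it to the boundary circle and to check that the resulting map is equivariant and depends only on the mapping class of $\phi$.

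The main obstacle is the continuous extension of $\hat\phi$ to $\partial\H^2$, since $S$ has infinite type and $\hat\phi$ need not be a quasi-isometry of $\H^2$. I would handle this group-theoretically: $\hat\phi$ conjugates hyperbolic elements of $\Gamma$ to hyperbolic elements, mapping each axis to an axis, and, being a homeomorphism of $\H^2$, it sends crossing geodesics to crossing geodesics and disjoint ones to disjoint ones; hence it induces a bijection of the set $F$ of fixed points of hyperbolic elements of $\Gamma$ which respects the four-point separation relation on the circle. Since $\Gamma$ is of the first kind, $F$ is dense in $\partial\H^2\cong S^1$, so this bijection extends uniquely to a homeomorphism $\bar\phi$ of $S^1$; it fixes $\tilde\infty$ and conjugates the $\langle p\rangle$-action to itself, hence descends to a homeomorphism of $\partial\tilde S$. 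Because $\bar\phi$ is determined by the action on axes, it depends only on the outer automorphism of $\Gamma$ induced by $\phi$, so it is unchanged by isotopy of $\phi$ and by the residual ambiguity (conjugation by $N_\Gamma(\langle p\rangle)/\langle p\rangle$) in the choice of lift; together with functoriality under composition this yields the claimed action of $\Mod(S)$ on $\partial\tilde S$ by homeomorphisms.
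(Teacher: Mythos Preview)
Your argument is essentially correct but both the logical ordering and the method for the boundary action differ from the paper's.

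\textbf{Ordering.} You treat the action on $\RGC$ first, arguing that long rays are preserved because ``being a long ray is the purely topological condition of being a simple arc which limits to $\infty$ on one end and has no limit point on the other.'' But in this paper the \emph{formal} definition of a long ray is in terms of its endpoint on $\partial\tilde S$, not that informal description; and it is not clear that every simple arc from $\infty$ with no limit point lifts to something with a well-defined endpoint on $\partial\tilde S$. The paper therefore reverses your order: it establishes the boundary homeomorphism first, and only then deduces that $\phi$ acts on long rays (and hence on $\RGC$), explicitly noting that this ordering is forced by the chosen definition. Your argument for $\RGC$ thus has a small gap relative to the paper's definitions, though it would be fine under a purely topological definition of long ray.

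\textbf{Method for $\partial\tilde S$.} The paper's argument is shorter and stays entirely in the conical cover. It observes that endpoints of (not necessarily simple) short rays and loops from $\tilde\infty$ are dense in $\partial\tilde S$, and that a mapping class preserves the cyclic order of geodesic rays emanating from $\tilde\infty$; hence it preserves the cyclic order on this dense subset and extends to a circle homeomorphism. Your route through the universal cover and fixed points of hyperbolic elements also works, but is heavier, and the step ``being a homeomorphism of $\H^2$, it sends crossing geodesics to crossing geodesics'' needs a word of justification: $\hat\phi$ does not take geodesics to geodesics, so one really argues that the $\hat\phi g\hat\phi^{-1}$-invariant line $\hat\phi(\mathrm{axis}(g))$ separates the two fixed points of $\hat\phi g\hat\phi^{-1}$ (via the annulus $\H^2/\langle\hat\phi g\hat\phi^{-1}\rangle$), so that linking of fixed-point pairs is preserved. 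The paper's approach avoids this by working with rays from a common basepoint rather than bi-infinite axes.
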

\begin{proof}
First, we observe that $\Mod(S)$ acts by isometries on $\RG$.  This is obvious from the definition of short rays and loops.  Next, as we require that any mapping class $\phi \in \Mod(S)$ fixes $\infty$, there is a well-defined lift $\tilde{\phi}:\tilde{S} \to \tilde{S}$ fixing $\tilde{\infty}$.  Now, endpoints of short rays and loops (not necessarily simple) are dense in the circle $\partial\tilde{S}$.  As any mapping class must preserve the cyclic order of geodesic rays from $\tilde{\infty}$, the mapping class must preserve the cyclic order of this dense set of ray and loop endpoints and thus extend to a homeomorphism on the boundary $\partial\tilde{S}$.  To avoid issues of isotopy, our definition of a long ray is in terms of endpoints on $\partial\tilde{S}$.  Hence only now, after establishing that $\tilde{\phi}$ induces a homeomorphism on $\partial\tilde{S}$, can we observe that this implies that $\phi$ acts on the set of long rays.  Furthermore, this action preserves disjointness between long rays and short rays and loops (because this can be defined in terms of cyclic orders on the boundary $\partial\tilde{S}$).  Thus $\phi$ induces an isometry on $\RGC$.
\end{proof}

\begin{lemma}\label{lemma:mod_transitive_on_short_rays}
The mapping class group $\Mod(S)$ acts transitively on short rays.
\end{lemma}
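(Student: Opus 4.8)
The claim is that $\Mod(S)$ acts transitively on short rays. The plan is to use the change-of-coordinates principle familiar from finite-type surface theory, adapted to $S = \Sph^2 - (K \cup \{\infty\})$. Given two short rays $r_1$ and $r_2$, each is an embedded arc from $\infty$ to a point of the Cantor set $K$, and I want to build a homeomorphism of $S$ (fixing $\infty$) carrying $r_1$ to $r_2$, which will then be the desired mapping class.

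First I would observe that cutting $S$ along a short ray $r$ yields a surface whose homeomorphism type does not depend on $r$: the ray $r$, together with the point $\infty$ and its endpoint on $K$, can be absorbed into the boundary structure, and one checks that $S$ cut along $r$ is again homeomorphic to a plane (or sphere) minus a Cantor set, with a distinguished boundary arc coming from the two sides of $r$. Concretely, I would work in the $\Sph^2$ picture: a regular neighborhood of $r \cup \{\infty\} \cup \{r(1)\}$ is a disk $D$, and $r$ separates this disk into two halves. The key point is that the complement $S \setminus r$ is homeomorphic to $\Sph^2$ minus a Cantor set minus an open disk, with the Cantor set sitting in a standard way. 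Since all embeddings of a Cantor set in $\Sph^2$ are equivalent (as recalled in the excerpt, citing \cite{Moise}), and since one can additionally arrange the complementary disk to be placed compatibly, the cut surfaces for $r_1$ and $r_2$ are homeomorphic via a homeomorphism respecting all the marked boundary data.

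The second step is to promote the homeomorphism between the cut surfaces to a homeomorphism $\phi$ of $S$ itself. This requires that the identification of the two copies of $r$ along the boundary is compatible — i.e. the boundary homeomorphism identifies $\infty$-side with $\infty$-side and endpoint-side with endpoint-side — which can be arranged, possibly after composing with a homeomorphism supported near the boundary arc. Gluing back up produces $\phi \in \homeo(S)$ with $\phi(\infty) = \infty$ and $\phi(r_1) = r_2$ up to isotopy; its class in $\Mod(S)$ is the element we want. I would be slightly careful that our convention requires mapping classes to fix $\infty$, so I must check the construction can be done fixing $\infty$ rather than merely permuting it with a Cantor point — but since $\infty$ is the unique isolated puncture and $r_i$ is an arc from $\infty$ to $K$, any homeomorphism taking $r_1$ to $r_2$ and $S$ to $S$ automatically takes $\infty$ to $\infty$.

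The main obstacle is making the change-of-coordinates argument rigorous in the infinite-type setting: one cannot simply invoke the finite-type classification of surfaces, and one must be genuinely careful that the Cantor set in the cut surface is embedded in the standard way so that \cite{Moise}-type uniqueness applies, and that the distinguished boundary arc can be placed in a standard position independent of the ray. The cleanest route is probably to first reduce to a \emph{model} short ray $r_0$ (say, the vertical segment from $\infty$ down to the leftmost point of $K$ in the planar picture), show directly that $S \setminus r_0$ is a standard "plane minus a Cantor set with a boundary slit," and then argue that for an arbitrary short ray $r$ the same is true — using that an embedded arc in a surface has a standard regular neighborhood and that the complement's ends are classified by the end structure of $S$, which is unchanged by removing an arc. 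Transitivity then follows by composing the coordinate change for $r_1 \to r_0$ with that for $r_0 \to r_2$.
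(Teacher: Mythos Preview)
Your change-of-coordinates approach is a genuine alternative to the paper's argument. The paper proceeds by two quite different steps: first it applies a Schoenflies theorem to obtain a homeomorphism $f$ of $\Sph^2$ fixing $\infty$ and sending the arc $a$ to the arc $b$; then, since $f$ need not send $K$ to $K$, it constructs a second homeomorphism $g$ fixing $b\cup\{\infty,p_b\}$ pointwise with $g(f(K))=K$. The construction of $g$ is the technical heart: one chooses disjoint closed disks $A_n$ in $U=\Sph^2\setminus(b\cup\{\infty,p_b\})$, Hausdorff-converging to the endpoint $p_b$, each $A_n$ containing a sub-Cantor-set of both $K$ and $f(K)$; on each $A_n$ one invokes the result of \cite{Beguin} that any two Cantor sets interior to a closed disk are ambiently homeomorphic rel boundary, and the infinite composition converges since the diameters go to zero.

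Your route is viable, but the step you flag as the main obstacle really is one, and your sketch does not quite close it. After cutting along $r$, the complement in $\Sph^2$ of the closed arc is an open disk, and the set to remove is $K\setminus\{p\}$, which is neither compact nor closed in the disk: it accumulates on the boundary at $p$. So the \cite{Moise}-type uniqueness statement for Cantor sets does not apply directly; you would need either the Ker\'ekj\'art\'o--Richards classification for infinite-type surfaces (with care about the noncompact boundary), or an ad hoc statement that any two copies of ``Cantor-set-minus-a-point accumulating at a single boundary point of a disk'' are ambiently equivalent rel boundary --- and proving the latter is essentially the same work as the paper's shrinking-disk construction. The paper's approach buys a self-contained argument from Schoenflies plus an elementary Cantor-set lemma; yours, once completed via the classification theorem, would be more conceptual and would generalize at once to arcs between arbitrary ends on other infinite-type surfaces.
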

\begin{proof}
Let us be given two short rays $a,b$.  We will think of them as being embedded in $S^2 - (\{\infty\} \cup K)$.  The endpoints $p_a, p_b$ of $a,b$ are points in the Cantor set.  By a theorem of Schoenflies (see for example~\cite{Le Roux}, Theorem A.3), there is a homeomorphism $f$ of the sphere such that $f(\infty) = \infty$ and $f(a) = b$ (and hence $f(p_a) = p_b$).  If we had $f(K) = K$, we would be done, but this is probably not the case.  We will find a homeomorphism $g$ of the sphere which fixes $\infty$, $b$, and $p_b$ such that $g(f(K)) = K$.  The composition $g \circ f$ will then be the desired element of $\Mod(S)$.  We let $U = S^2 - \{\infty,p_b\}\cup b$.  Note $U$ is an open disk whose closure is the ray $b$.  Every point of $f(K)$ and $K$ is contained in $U$ except for the point which is labeled $p_b \in K$.  To alleviate confusion, we will denote by $x$ the point which is labeled $p_b$ in $K$ (and is $f(p_a)$ in $f(K)$).

It is proved in~\cite{Beguin}, Proposition~I.1, that if $C, C'$ are two Cantor sets in the interior of a closed disk $D$, there is a homeomorphism of $D$ which fixes the boundary and maps $C$ to $C'$.  We cannot quite apply this result to our situation because the Cantor sets $K$ and $f(K)$ have the point $x$ on the boundary.  

Because $K$ and $f(K)$ are both Cantor sets, the union $K \cup f(K)$ is also a Cantor set, and we can enclose it in a union of disks as follows.  Let $A_n$ be a sequence of closed disks such that:
\begin{enumerate}
\item $A_n \subseteq U$.
\item The $A_n$ are pairwise disjoint.
\item As $n \to \infty$, the set $A_n$ Hausdorff converges to the single point $x$.
\item For all $n$, $A_n$ contains some point in $K$ and some point in $f(K)$ and $\partial A_n$ does not intersect $K \cup f(K)$.
\item $(K\cup f(K)) - \{x\} \subseteq \bigcup_n A_n$.
\end{enumerate}
That is, $A_0$ contains the points in $K \cup f(K)$ which are far from $x$, then $A_1$ contains some points that are closer, and so on, limiting to the single point $x$.  Note that both $K$ and $f(K)$ have $x$ as an accumulation point, so there is no trouble containing points from both sets and also Hausdorff limiting to $x$.

Because $\partial A_n$ does not intersect $K \cup f(K)$, we must have $A_n$ containing two sub-Cantor-sets from $K$ and $f(K)$, and we can apply \cite{Beguin}~Proposition~I.1 to obtain a homeomorphism $g_n :A_n \to A_n$ taking $f(K) \cap A_n$ to $K \cap A_n$.  Since the diameter of $A_n$ is going to zero, the infinite composition  $g = \ldots \circ g_1 \circ g_0$ is a well-defined homeomorphism of $U$ which takes $f(K)$ to $K$.  Then $g \circ f$ is the desired homeomorphism in the lemma. 
\end{proof}

\subsection{Filling rays}

We will be interested in understanding how long rays interact with short rays and loops.  The concept of \emph{filling} rays is important.  Recall rays and loops are defined up to isotopy; hence a long ray intersects a ray or loop if it intersects every representative of their equivalence class.  Equivalently, the geodesic representatives intersect.

\begin{definition}
A long ray $l$ is said to be:
\begin{itemize}
\item \emph{loop-filling} if it intersects every loop.
\item \emph{ray-filling} if it intersects every short ray.
\item \emph{k-filling} if 
       \begin{enumerate}
       \item There exists a short ray $l_0$ and long rays $l_1,\ldots, l_k=l$ such that $l_i$ is disjoint from  $l_{i+1}$ for all $i\ge 0$.
       \item $k$ is minimal for this property.
      \end{enumerate}
\item \emph{high-filling} if it is ray-filling and not $k$-filling for any $k \in \N$.
\end{itemize}
\end{definition}

That is, a long ray is $k$-filling if it is at distance exactly $k$ from the set of short rays in the graph $\RGC$.  A long ray is high-filling if it is not in the connected component of $\RGC$ containing the short rays.

\subsubsection{Examples of high-filling rays}
In this section, we give some examples of high-filling rays.  We will not need the fact that these rays are actually high-filling; they are provided as examples to help understand the ideas.  Therefore, we just give a brief sketch of \emph{why} they are high-filling.

Figure~\ref{figu:high-filling_alpha_k} gives an example of a high-filling ray. It is the limit ray of the sequence $(\alpha_k)$ defined in \cite{Juliette}. This sequence is such that for every $k \geq 1$, every ray disjoint from a ray which $\textnormal{long}(\alpha_{k+1})$-begins like $\alpha_{k+1}$ has to $\textnormal{long}(\alpha_k)$-begin like $\alpha_k$, where for every ray $x$, $\textnormal{long}(x)$ denotes the number of segments of the equator that $x$ intersects.  Moreover, for every $k$, $\textnormal{long}(\alpha_{k+1})$ is greater than $\textnormal{long}(\alpha_{k})$ (these properties are established in~\cite{Juliette}). As points of the boundary of the universal cover, the endpoints of the $\alpha_k$'s are cover-converging to a point, denoted by $p$. According to Lemma~\ref{lemma:simple_is_compact}, the geodesic between $\tilde \infty$ and $p$ is the lift of a simple ray, denoted by $\alpha_\infty$. We claim that this ray $\alpha_\infty$ is high-filling. Indeed, according to the properties of $(\alpha_k)$, there is no ray disjoint from $\alpha_\infty$ (more precisely, every ray disjoint from $\alpha_\infty$ has to $\infty$-begin like $\alpha_\infty$). Thus, there are no edges in $\RGC$ incident on $\alpha_\infty$; it is a clique unto itself. As a consequence, we see that there exist infinitely many cliques in $\RGC$: any image of $\alpha_\infty$ under an element $\phi$ of $\Mod(\R^2 -\Cantor)$ which does not preserve $\alpha_\infty$ gives a clique with the one element $\phi(\alpha_\infty)$.

\begin{figure}[htb]
\labellist
\pinlabel $\alpha_1$ at 13 53
\pinlabel $\alpha_2$ at 89 53
\pinlabel $\alpha_3$ at 167 53
\pinlabel $\alpha_4$ at 13 3
\pinlabel $\alpha_5$ at 89 3
\pinlabel $\alpha_6$ at 167 3
\endlabellist
\centering
\includegraphics[scale=1.3]{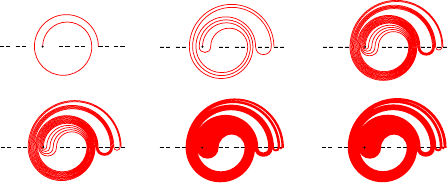}
\caption{Example of (the beginning of) a high-filling ray -- the high-filling ray itself is $\alpha_\infty$ (which looks like $\alpha_6$ if we do not zoom).}
\label{figu:high-filling_alpha_k}
\end{figure}

As the example above shows, it is nontrivial to prove that a long ray is high-filling.  We can create more examples by finding rays fixed by pseudo-Anosov mapping classes of subsurfaces.  The details of this are not important for this paper; we mention it here as a comment that we have examples of cliques of high-filling rays with an arbitrary number of elements, (see for example Figure~\ref{figu:4_clique}), but we don't know if there exist cliques of high-filling rays with infinitely many elements.

\begin{figure}[htb]
\centering
\includegraphics[scale=0.23]{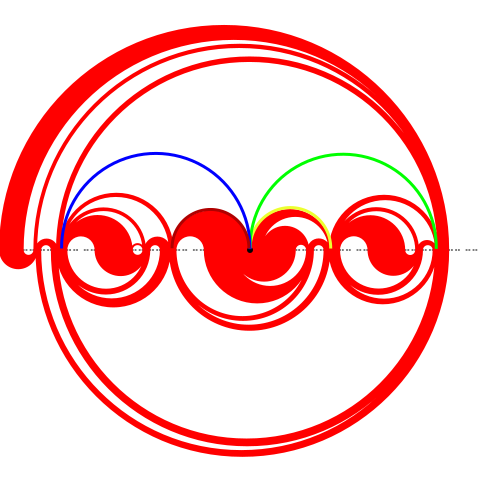}
\includegraphics[scale=0.23]{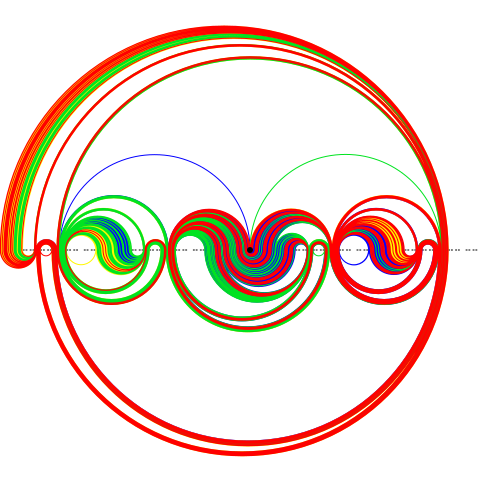}
\includegraphics[scale=0.23]{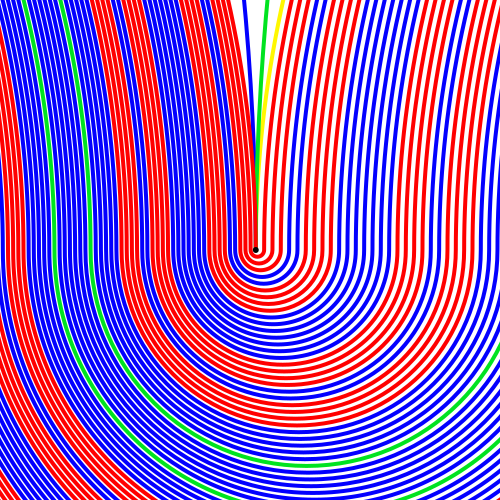}
\caption{Example of (the beginning of) four mutually disjoint high-filling long rays. One of the rays is shown in red, left, with the beginnings of the others.  We see them all (confusingly) drawn together, middle, and we show how they come together at $\infty$, right. }
\label{figu:4_clique}
\end{figure}

\subsubsection{Some properties of filling rays}

\begin{lemma}\label{lemma:loop_filling_ray_filling}
Every ray-filling ray is also loop-filling.  Hence every high-filling ray is loop-filling.
\end{lemma}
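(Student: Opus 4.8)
The plan is to prove the contrapositive: if a long ray $l$ is not loop-filling, then it is not ray-filling. So suppose $l$ is disjoint from some loop $\delta$. I want to produce a short ray disjoint from $l$. The key observation is that a loop $\delta$, being an embedded simple arc from $\infty$ to itself, separates the sphere $\Sph^2$ into two pieces; since $K$ is a Cantor set lying on the equator and $\delta$ meets the equator only at $\infty$ together with possibly crossing it transversally, each of the two complementary regions of $\delta$ in $S$ contains points of $K$ (one can arrange, by choosing $\delta$'s geodesic representative, that $\delta$ genuinely has Cantor points on both sides — a loop with all of $K$ on one side would be isotopic into a neighborhood of $\infty$ and hence trivial, contradicting that it is a genuine loop).

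The main step is then: since $l$ is disjoint from $\delta$, the ray $l$ lies entirely in the closure of one of the two complementary regions, say $R$. Pick a point $p \in K$ lying in the \emph{other} region $R'$. I claim there is a short ray $r$ from $\infty$ to $p$ whose interior stays in $R'$ (away from $\delta$): indeed, $R' \cup \{\infty, p\}$ is a region of the sphere whose frontier is $\delta$, and within it one can connect $\infty$ to $p$ by a simple arc, using that $R'$ is an open disk (or has $\infty$ on its frontier in a way that permits an approach) — concretely, take a small half-disk neighborhood of $\infty$ on the $R'$ side and an arc inside $R'$ from there to $p$, which exists because $R'$ is connected and $p$ is accessible from $R'$. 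Such an $r$ is disjoint from $\delta$, and since $l$ and $\delta$ are disjoint with $l$ on the $R$ side, we can isotope $r$ to avoid $l$ as well (or simply note that the geodesic representatives of $r$ and $l$, both disjoint from the geodesic representative of $\delta$, lie on opposite sides of it and hence are disjoint). Therefore $r$ is a short ray disjoint from $l$, so $l$ is not ray-filling.

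The last sentence of the lemma is immediate: a high-filling ray is ray-filling by definition, hence (by the part just proved) loop-filling.

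\medskip

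The step I expect to be the main obstacle is making precise the claim that the loop $\delta$ really does separate points of $K$ on both sides, and that there is an accessible Cantor point in the region $R'$ not containing $l$ together with a short ray reaching it. One has to be a little careful: a priori $\delta$ could have only finitely many Cantor points, or Cantor points on both sides but with $l$ somehow "boxed in." The clean way around this is to work with geodesic representatives in the conical cover $\tilde S$ (or the universal cover): lift $\delta$ based at $\tilde\infty$; its two endpoints $\tilde\infty$ and $q \in \partial\tilde S$ (with $q$ a non-distinguished lift of $\infty$) cut $\partial\tilde S$ into two arcs $J_1, J_2$. The lift $\tilde l$ of $l$ from $\tilde\infty$ is disjoint from $\tilde\delta$, so its endpoint lies in the closure of one arc, say $\bar J_1$. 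The other arc $J_2$ is a nonempty open interval on the circle $\partial\tilde S$, and since endpoints of simple short rays are dense in $\partial\tilde S$ (they are dense even among all ray/loop endpoints, which are dense — this is used in the proof of Lemma~\ref{lemma:mod_acts}, but more to the point each fundamental-domain segment of $K$ contributes a lift in any nonempty boundary arc), $J_2$ contains the endpoint $\tilde p$ of some simple short ray $r$. The geodesic $(\tilde\infty\, \tilde p)$ is then disjoint from $\tilde\delta$ and, being on the opposite side, disjoint from $\tilde l$; projecting down, $r$ is a short ray disjoint from $l$. This cover-based argument sidesteps all the Schoenflies-type fussing and is the version I would actually write.
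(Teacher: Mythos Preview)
Your first argument --- the contrapositive via the Jordan curve theorem --- is correct and is exactly the paper's proof: the loop $\delta$ (together with $\infty$) separates the sphere into two regions, each containing Cantor points since $\delta$ is nontrivial, the long ray $l$ sits in one region, and a short ray drawn entirely in the other region witnesses that $l$ is not ray-filling.

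Your preferred conical-cover version, however, has a genuine gap at the final step ``projecting down, $r$ is a short ray disjoint from $l$.'' You find a simple short ray $r$ whose distinguished lift $\tilde r$ from $\tilde\infty$ lands in $J_2$, and correctly observe that $\tilde r$ and $\tilde l$ are separated by $\tilde\delta$ in $\tilde S$. But disjointness of the distinguished lifts does \emph{not} imply disjointness of $r$ and $l$ in $S$: the conical cover is still an infinite-sheeted cover, and $l$ has many other lifts in $\tilde S$ (emanating from the other preimages of $\infty$ on $\partial\tilde S$, for instance from $q$ itself). Any of those lifts may lie on the $J_2$ side of $\tilde\delta$ and cross $\tilde r$. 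Equivalently, the half of $\tilde S\setminus\tilde\delta$ containing $\tilde r$ does not project into $R'$; it projects onto all of $S\setminus\delta$. To repair the argument you would need to know that $r$ is disjoint from $\delta$ \emph{in $S$} (not merely $\tilde r$ disjoint from $\tilde\delta$), so that the Jordan separation downstairs forces $r\subset R'$ --- and producing such an $r$ is exactly the topological step you were hoping to bypass. There is also a smaller loose end: density of \emph{simple} short-ray endpoints in $\partial\tilde S$ is not what is asserted in the proof of Lemma~\ref{lemma:mod_acts} (which uses not-necessarily-simple rays and loops), so that claim would need its own justification.
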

\begin{proof}
Let $r$ be a ray-filling ray.  Suppose towards a contradiction that $r$ is not loop filling, so there is some nontrivial loop $l$ which is disjoint from $r$.  Now $l$ divides $S$ into two regions $U_1, U_2$ by the Jordan curve theorem, and since $r$ and $l$ are disjoint, it must be that $r$ lies completely inside one of the two regions, say $U_1$ without loss of generality.  Since $l$ is a nontrivial loop, there are points in the Cantor set $K$ in both $U_1$ and $U_2$.  Hence, it is simple to draw a short ray totally inside $U_2$ connecting $\infty$ to some point of the Cantor set.  By construction, this ray is disjoint from $r$.  This is a contradiction.
\end{proof}

\begin{conjecture}\label{conj:loop_filling_not_ray_filling}
There are loop-filling rays which are not ray-filling.
\end{conjecture}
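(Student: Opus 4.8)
The plan is to fix a short ray $r_0$ and construct a long ray $l$ which is disjoint from $r_0$ (hence not ray-filling) but whose accumulation set on the Cantor set is \emph{all} of $K$; the latter will force $l$ to be loop-filling. The reduction rests on the following loop-filling criterion. Viewing $S$ as $\Sph^2\setminus(K\cup\{\infty\})$, let $l$ be a long ray with its geodesic representative, and let $A_l=\bar l\cap K$ (closure taken in $\Sph^2$) be its accumulation set. If $l$ is disjoint from a nontrivial loop $\ell$, then the Jordan curve obtained by closing $\ell$ up at $\infty$ is disjoint from $K$ and separates $\Sph^2$ into two disks $U_1,U_2$; since $l$ is connected and avoids $\ell$, it lies in one of them, say $U_1$, so $A_l\subseteq U_1\cap K$, a proper clopen subset of $K$. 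Contrapositively, if $A_l=K$ then $l$ meets every nontrivial loop. So it suffices to produce a long ray with $A_l=K$ that is disjoint from some short ray.

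Next I would build such an $l$ as a limit of approximating short rays. Fix a short ray $r_0$ from $\infty$ to $p_0\in K$ and a countable dense set $\{q_1,q_2,\ldots\}\subseteq K$. Inductively construct simple short rays $A_1,A_2,\ldots$, all disjoint from $r_0$, together with integers $k_1<k_2<\cdots$, so that $A_{n+1}$ crosses the same first $k_n$ equatorial segments as $A_n$ in the same directions, and so that the portion of $A_n$ realizing its first $k_n$ crossings passes within spherical distance $1/n$ of $q_n$. To pass from $A_n$ to $A_{n+1}$, let $B$ be the initial sub-arc of $A_n$ realizing its first $k_n$ crossings; then $B\cup r_0$ is a simple arc, its complement in $S$ is connected, and one extends $B$ inside this complement by a thin ``finger'' that enters the $\tfrac1{n+1}$-neighborhood of $q_{n+1}$ and retraces back out before terminating at a point of $K$, yielding $A_{n+1}$; one sets $k_{n+1}$ to be the total number of equator crossings so produced. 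The finger is chosen so as never to separate off a region of $S$ still containing points of $K$ that later excursions must approach.

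Finally I would pass to the limit using the compactness results of Section~\ref{section:cover_convergence}. The lifts of the $A_n$ from $\tilde\infty$ have endpoints $x_n\in\partial\tilde S$, and since the committed equator-crossing prefixes have length $k_n\to\infty$, the $x_n$ converge to a point $x_\infty$, which by Lemma~\ref{lemma:simple_is_compact} lies in $E$; set $l:=\pi((\tilde\infty x_\infty))$, a simple short ray, loop, or long ray. By Lemma~\ref{lemma:cover_converge_iff_k_begin} the $A_n$ cover-converge to $l$, so Lemma~\ref{lemma:cover_converge_limit} (applied with the constant sequence $r_0$) shows $l$ is disjoint from $r_0$. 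Since $l$ agrees with $A_n$ on its first $k_n$ crossings, $l$ passes within $1/n$ of $q_n$ for every $n$, and then density of $\{q_n\}$ together with $1/n\to0$ gives $A_l=\overline{\{q_n\}}=K$. In particular $l$ is not a short ray (whose accumulation set is a single point) nor a loop (whose accumulation set is empty), so $l$ is a long ray; by the criterion above it is loop-filling; and it is disjoint from $r_0$, hence not ray-filling.

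The hard part is the inductive construction: arranging that each $A_n$ is simultaneously simple, disjoint from $r_0$, has a strictly longer committed prefix than $A_{n-1}$, and has that prefix genuinely approach $q_n$ — in particular managing the finger near $q_{n+1}$ when $q_{n+1}$ is close to the endpoint $p_0$ of $r_0$, and ensuring no region of $K$ is ever irrecoverably enclosed. Although this is standard in spirit for producing dense simple arcs in a surface, making all the conditions mutually compatible requires genuine bookkeeping, which is presumably why the statement is recorded here as a conjecture rather than proved.
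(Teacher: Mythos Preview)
The paper does not prove this statement: it is recorded as a conjecture, with only Figure~\ref{figu:loop_filling_not_ray_filling} offered as evidence, so there is no argument to compare yours against. That said, your reduction is sound. The criterion ``$\bar l\cap K=K$ implies $l$ is loop-filling'' follows exactly from the Jordan-curve separation you describe (the same mechanism as in the proof of Lemma~\ref{lemma:loop_filling_ray_filling}), and building $l$ as a cover-limit of short rays disjoint from a fixed $r_0$, using Lemmas~\ref{lemma:simple_is_compact} and~\ref{lemma:cover_converge_limit} to control the limit, is the natural strategy.

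Beyond the bookkeeping you already flag, two points need repair. First, ``passes within spherical distance $1/n$ of $q_n$'' is a geometric condition on a chosen representative of $A_n$ and does not automatically transfer to the geodesic representative of the limit $l$: agreeing on the first $k_n$ equator crossings tells you which \emph{gaps} $l$ traverses, not where it travels within a hemisphere. The fix is to make the condition combinatorial --- require that among the first $k_n$ crossings of $A_n$ there is a crossing of some gap of diameter $<1/n$ lying within $1/n$ of $q_n$; then $l$ crosses that gap too, and density of $\{q_n\}$ gives $A_l=K$. Second, your deduction that $l$ is a long ray is circular as written: you cite Lemma~\ref{lemma:cover_converge_iff_k_begin}, whose hypothesis is already that the limit is a long ray, and then deduce long-ray-ness from $A_l=K$, which you obtained via that lemma. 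One can argue directly instead: if $x_\infty$ were a lift of some $p\in K$, then for all large $k$ the $k$-th committed equatorial lift would have to share the endpoint $x_\infty$, forcing the crossing sequence eventually to visit only the two gaps adjacent to $p$ --- impossible, since by construction it visits gaps near a dense subset of $K$. With these repairs the outline is plausible, but executing the inductive extension rigorously is exactly the work the authors declined to do.
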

For an illustration of why we believe Conjecture~\ref{conj:loop_filling_not_ray_filling}, see Figure~\ref{figu:loop_filling_not_ray_filling}.

\begin{figure}[htb]
\centering
\includegraphics[scale=0.3]{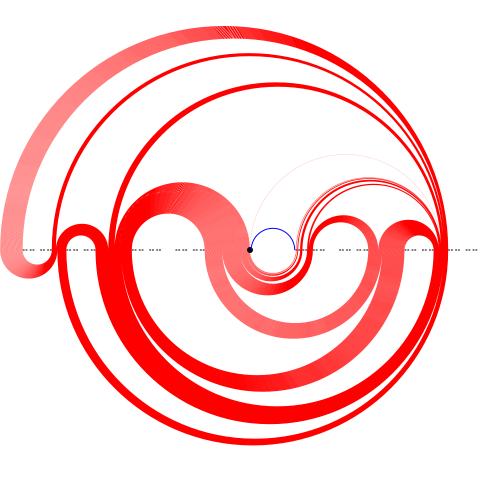}
\includegraphics[scale=0.3]{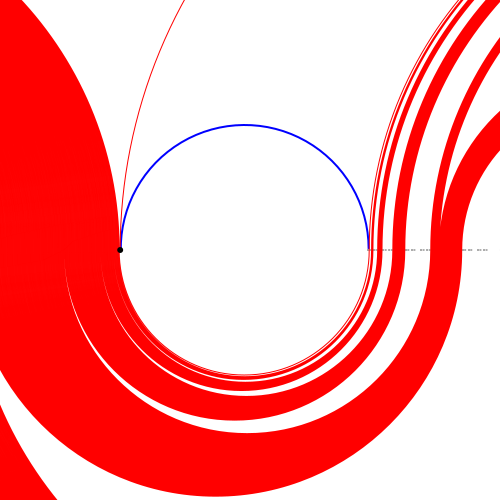}
\caption{An apparently loop-filling ray (red) which is not ray-filling.}
\label{figu:loop_filling_not_ray_filling}
\end{figure}

\begin{lemma} \label{lemma:disjoint_from_loop_filling}
Let $L$ be a loop-filling ray.
If $l$ and $l'$ are two rays (short or long) disjoint from $L$, then $l$ and $l'$ are disjoint.
\end{lemma}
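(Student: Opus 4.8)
The plan is to argue by contradiction using the conical cover, where disjointness of geodesic representatives translates into a cleaner combinatorial/topological picture. Suppose $l$ and $l'$ are both disjoint from the loop-filling ray $L$ but intersect each other. Work with the geodesic representatives in $\tilde S$ (and their images in $S$): the lifts $\tilde L$, $\tilde l$, $\tilde l'$ all emanate from $\tilde\infty$, and $\tilde L$ is disjoint from $\tilde l$ and from $\tilde l'$ in the interior of $\tilde S$. The key point is that $L$, being loop-filling, is in particular not disjoint from any loop, which should force $L$ to be ``spread out'' enough that the complement $S \setminus L$ contains no loop; I will leverage this to constrain where $l$ and $l'$ can live relative to $L$.

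The first step is to set up the complement. Since $l$ and $l'$ are each disjoint from $L$, after isotoping to geodesic representatives they lie in the closure of $S \setminus L$. Now $L$ is an embedded arc from $\infty$ to a non-puncture boundary point (in the conical-cover picture, its lift limits to $p \in \partial\tilde S$ which is neither a lift of a Cantor point nor of $\infty$). The complement $S \setminus L$ is an open surface; I want to understand its topology well enough to say that any two disjoint simple rays in it are disjoint. The crucial claim is: \emph{$S \setminus L$ is ``planar with only one end that sees $\infty$''}, or more precisely, that if $l$ and $l'$ both start at $\infty$ and are disjoint from $L$, then near $\infty$ they enter the same complementary region of $L$, and that region is simply enough connected (a disk, or a half-plane) that two disjoint arcs in it cannot cross. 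If $l$ and $l'$ entered \emph{different} complementary regions of $L$ near $\infty$, then walking along $L$ from $\infty$ together with an arc of $l$ and an arc of $l'$ back to a neighborhood of $\infty$ would produce a loop disjoint from $L$ — contradicting that $L$ is loop-filling. This is exactly where loop-filling gets used.

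So the core argument runs: (1) by loop-filling, every complementary region of $L$ in $S$ that is adjacent to $\infty$ — there are two, one on each side of $L$ locally at $\infty$ — cannot contain a loop, hence must be ``disk-like'' relative to $\infty$ in the sense that its fundamental group gives no loop around Cantor points on both sides; more carefully, I will show that $l$ and $l'$ must lie in the \emph{same} one of these two local regions, again because otherwise concatenating initial segments of $l$ and $l'$ along a small arc near $\infty$ (using that loops come with both orientations, and $L$ separates locally at $\infty$) yields a loop disjoint from $L$; (2) within a single complementary region $U$ of $L$, which is an open subsurface of the plane bounded by (part of) $L$ and with $\infty$ on its boundary, two simple arcs from $\infty$ that are each disjoint from $\partial U \supseteq$ (the relevant part of) $L$ — wait, they needn't be disjoint from each other a priori. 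So step (2) must instead be: the region $U$, being cut out by the single simple arc $L$, is planar and has the property that $\infty$ lies on its boundary as a single accessible point, so $U$ is homeomorphic to a subset of a half-plane with $\infty$ on the boundary line; but that still allows crossing arcs. The real resolution: use that $l$ and $l'$ are disjoint from \emph{all} of $L$, run the same ``concatenate and get a loop'' trick on $l$ and $l'$ themselves — if $l$ and $l'$ cross, take the sub-arcs of $l$ and $l'$ from $\infty$ to their first intersection point; their union is a loop (or a ray, handled similarly) which, being built from pieces of $l$ and $l'$, is disjoint from $L$; a loop contradicts loop-filling of $L$ directly, and the ray case reduces to the loop case by a standard surgery or is handled by Lemma~\ref{lemma:loop_filling_ray_filling}. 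That is the cleanest route.

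The main obstacle I anticipate is the orientation/basepoint bookkeeping at $\infty$: the concatenation of an initial segment of $l$ and an initial segment of $l'$ at a first intersection point gives an embedded bigon-like arc, but to make it a genuine \emph{loop} (embedded arc from $\infty$ to $\infty$) I must ensure the two segments meet only at that intersection point and that the resulting closed-up curve at $\infty$ is simple — this requires choosing the \emph{first} intersection point along $l$ and then the corresponding parameter along $l'$, and checking that no earlier crossing was missed, i.e. a ``innermost intersection'' argument. I must also handle the degenerate case where the concatenated object is not a loop but a long or short ray (if $l$ and $l'$ share an endpoint or the surgered curve fails to close up at $\infty$); in that case it is still a simple ray disjoint from $L$, and since $L$ is loop-filling hence ray-filling would be too strong — but $L$ need only be loop-filling here, so I would instead note a short ray disjoint from $L$ produces, by the Jordan-curve argument in the proof of Lemma~\ref{lemma:loop_filling_ray_filling} run in reverse, a loop disjoint from $L$, contradiction. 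Collecting these cases gives that $l$ and $l'$ cannot intersect, which is the claim.
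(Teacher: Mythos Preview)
Your eventual core idea matches the paper's exactly: if $l$ and $l'$ intersect, concatenate initial segments of $l$ and $l'$ at an intersection point to produce a simple nontrivial loop disjoint from $L$, contradicting that $L$ is loop-filling. The long detour through complementary regions of $L$ and the conical cover is unnecessary and never quite lands; you should cut it and go straight to the concatenation argument.

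There is one genuine gap. You correctly flag the obstacle that a ``first intersection along $l$'' may not exist (since $l'$ could in principle accumulate on $l$), but your resolution (``innermost intersection argument'') is too vague to work as stated. The paper's fix is a clean two-step: first pick \emph{any} intersection point $x$ of $l$ and $l'$, and let $l_x$, $l'_x$ be the initial segments of $l$, $l'$ from $\infty$ to $x$. These are compact arcs, so $l_x$ and $l'_x$ intersect only finitely many times; now the \emph{first} intersection $y$ of $l_x$ with $l'_x$ (along $l_x$) does exist, and $l_{x,y} \cup l'_{x,y}$ is the desired simple loop. You should incorporate this explicitly.

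Two smaller points. Your worry that the concatenation might be a ray rather than a loop is unfounded: both initial segments start at $\infty$, so their union joined at the intersection point is automatically a loop (an arc from $\infty$ to $\infty$). And nontriviality of that loop follows simply from $l$ and $l'$ being geodesic representatives, hence in minimal position: a trivial bigon would let you isotope away the intersection.
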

\begin{proof}
Suppose that $l$ and $l'$ intersect.  We must be careful, because a priori $l'$ might accumulate on $l$ in such a way that there is no ``first'' intersection as we follow $l$ from $\infty$. However, we can still let $x$ be a point of intersection, and let $l_x$, $l'_x$ be the initial segments of $l$ and $l'$ from $\infty$ to the point of intersection $x$.  Let $y$ be the first intersection point between $l_x$ and $l'_x$ as we follow $l_x$ from $\infty$.  Let $l_{x,y}$ be the initial segment of $l_x$ from $\infty$ to $y$, and let $l'_{x,y}$ be the similar initial segment of $l'_x$.  Since $l$ and $l'$ are in minimal position, $l_{x,y}\cup l'_{x,y}$ is a nontrivial loop, and it is simple because $y$ is that first intersection of $l_x$ and $l'_x$.  This nontrivial loop is disjoint from $L$, which is a loop-filling ray.  This is a contradiction, so we conclude that $l$ and $l'$ are disjoint.
\end{proof}

\begin{corollary} \label{corollary:disjoint_from_loop_filling}
Let $l$ be a loop-filling ray which is not ray-filling, then every long ray disjoint from $l$ is not ray-filling.
\end{corollary}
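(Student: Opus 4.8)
The plan is to deduce this directly from Lemma~\ref{lemma:disjoint_from_loop_filling}, so the proof will be very short. Since $l$ is assumed to be loop-filling but not ray-filling, by the definition of ray-filling there exists a short ray $r$ which is disjoint from $l$. Now let $m$ be an arbitrary long ray disjoint from $l$. The point is that $r$ and $m$ are both rays (one short, one long) disjoint from the \emph{loop-filling} ray $l$, which is exactly the situation covered by Lemma~\ref{lemma:disjoint_from_loop_filling}. That lemma therefore applies and tells us that $r$ and $m$ are disjoint. But $r$ is a short ray, so $m$ fails to intersect every short ray; by definition $m$ is not ray-filling. Since $m$ was an arbitrary long ray disjoint from $l$, this is precisely the claimed statement.

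There is essentially no obstacle here: the only thing to verify is that the hypotheses of Lemma~\ref{lemma:disjoint_from_loop_filling} hold, i.e.\ that $l$ is loop-filling (given by hypothesis) and that the two rays under consideration are each disjoint from $l$ (the short ray $r$ by our choice, and the long ray $m$ by the hypothesis on $m$). One might also note at the outset that the statement is vacuous if no long ray is disjoint from $l$, so no existence claim about such $m$ is needed; and conversely, if desired, one can observe that $r$ itself witnesses that $l$ is not ray-filling, which is all we invoke. Thus the corollary is an immediate consequence of Lemmas~\ref{lemma:loop_filling_ray_filling} and~\ref{lemma:disjoint_from_loop_filling}, the latter doing all the work.
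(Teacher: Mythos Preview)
Your proof is correct and is essentially identical to the paper's own argument: pick a short ray disjoint from $l$ (using that $l$ is not ray-filling), then apply Lemma~\ref{lemma:disjoint_from_loop_filling} to conclude this short ray is disjoint from any long ray $m$ disjoint from $l$, so $m$ is not ray-filling. The only difference is cosmetic (your mention of Lemma~\ref{lemma:loop_filling_ray_filling} is not actually needed, since the hypothesis already assumes $l$ is loop-filling).
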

\begin{proof}
Since $l$ is not ray filling, there is a short ray $x$ disjoint from $l$.  Now suppose we are given a long ray $l'$ disjoint from $l$.  As $l$ is loop-filling, according to Lemma~\ref{lemma:disjoint_from_loop_filling}, $x$ and $l'$ are disjoint. Hence $l'$ is not ray-filling.
\end{proof}

\begin{lemma}\label{lemma:no_k_filling}
There exists no $k$-filling ray for $k> 2$.
\end{lemma}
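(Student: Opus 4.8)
The plan is a short argument by contradiction. Suppose some long ray $l$ is $k$-filling with $k \geq 3$, and fix a witnessing sequence as in the definition: a short ray $l_0$ and long rays $l_1,\dots,l_k=l$ with $l_i$ disjoint from $l_{i+1}$ for every $i$, where $k$ is minimal with this property. The idea is to examine the next-to-last ray $l_{k-1}$ and show that, whatever its filling type, the witnessing sequence can be shortened, contradicting minimality.

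First I would dispose of the easy case: if $l_{k-1}$ is \emph{not} ray-filling, then by definition there is a short ray $x$ disjoint from $l_{k-1}$, and then $x,\, l_{k-1},\, l$ is a short ray followed by two long rays with consecutive ones disjoint, so it witnesses that $l$ is $j$-filling for some $j \leq 2$. This contradicts $k \geq 3$.

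The substantive case is that $l_{k-1}$ \emph{is} ray-filling. Then Lemma~\ref{lemma:loop_filling_ray_filling} gives that $l_{k-1}$ is loop-filling. Since $k \geq 3$ we have $k-2 \geq 1$, so $l_{k-2}$ is an honest long ray (not the short ray $l_0$), and both $l_{k-2}$ and $l_k = l$ are rays disjoint from the loop-filling ray $l_{k-1}$. Lemma~\ref{lemma:disjoint_from_loop_filling} then forces $l_{k-2}$ and $l$ to be disjoint, so $l_0, l_1, \dots, l_{k-2}, l$ is a witnessing sequence of length $k-1$, contradicting the minimality of $k$. In both cases we reach a contradiction, so no $k$-filling ray exists for $k > 2$.

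I do not anticipate a real obstacle here. The only points requiring care are the index bookkeeping --- it is exactly the hypothesis $k > 2$ that ensures $l_{k-2}$ is a long ray, so that the shortened sequence still has the form demanded by the definition of $k$-filling --- and the trivial observation that any degenerate coincidence among the $l_i$ would only make the sequence shorter. Everything else is a direct application of Lemmas~\ref{lemma:loop_filling_ray_filling} and~\ref{lemma:disjoint_from_loop_filling}.
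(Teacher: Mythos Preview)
Your proof is correct and follows essentially the same approach as the paper's: locate a ray-filling (hence loop-filling) intermediate vertex in the witnessing chain and invoke Lemma~\ref{lemma:disjoint_from_loop_filling} to bypass it, contradicting minimality. The only cosmetic difference is that the paper argues at the start of the chain (examining $l_2$, so that $l_1$ and $l_3$ become disjoint) while you argue at the end (examining $l_{k-1}$, so that $l_{k-2}$ and $l_k$ become disjoint); the logic is identical.
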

\begin{proof}
Suppose there is a $k$-filling ray $l$ for $k>2$.  Then there is a short ray $l_0$ and long rays $l_1,\ldots,l_k=l$ such that $(l_0,l_1,...,l_k)$ is a path in the graph $\RGC$.  Consider the portion of the sequence $l_0,l_1,l_2,l_3,\ldots$.  Now, $l_2$ must be ray-filling (or else we could shorten the sequence by removing $l_1$) and hence loop-filling.  Applying Lemma~\ref{lemma:disjoint_from_loop_filling}, we find that $l_3$ and $l_1$ must be disjoint.  But in this case, we can remove $l_2$ from the sequence entirely, and in fact $l$ is $(k-1)$-filling.  This is a contradiction because $l$ was assumed to be $k$-filling.
\end{proof}

Note that Lemma~\ref{lemma:no_k_filling} implies that any long ray which is in the connected component of $\RG$ inside $\RGC$ must be at distance at most $2$ from $\RG$.

\begin{question}
Do there exist $2$-filling rays?
\end{question}

Note that this question is equivalent to asking whether ray-filling rays are necessary high-filling.  We do not know the answer.  It seems difficult to find a long ray which intersects every short ray but is disjoint from a long ray which is disjoint from a short ray.

\begin{lemma}\label{lemma:cliques}
Any connected component of $\RGC$ containing a high-filling ray is a clique of high-filling rays.
\end{lemma}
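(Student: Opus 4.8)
The plan is to show that if a connected component $C$ of $\RGC$ contains a high-filling ray $l$, then $C = \{l\}$, and moreover $l$ is the unique vertex of $C$, which is trivially a clique of high-filling rays. First I would observe that by Lemma~\ref{lemma:loop_filling_ray_filling}, every high-filling ray is loop-filling, so $l$ is loop-filling. The key structural input is Lemma~\ref{lemma:disjoint_from_loop_filling}: any two rays (short or long) disjoint from a loop-filling ray are themselves disjoint. I would use this to propagate information around the component.

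The main step is to prove that every vertex of $C$ is high-filling. Suppose $v$ is a vertex adjacent to $l$. Since $l$ is loop-filling, $v$ cannot be a loop (a loop-filling ray meets every loop), so $v$ is a short or long ray disjoint from $l$. If $v$ were a short ray, then $l$ would be at distance $1$ from $\RG$, contradicting that $l$ is high-filling (high-filling rays are not $k$-filling for any $k$, in particular not $1$-filling). So $v$ is a long ray. Now I claim $v$ is itself high-filling. First, $v$ is ray-filling: if not, there is a short ray $x$ disjoint from $v$, and applying Lemma~\ref{lemma:disjoint_from_loop_filling} to the loop-filling ray $l$ and the two rays $x$ and $v$ — wait, I need $x$ disjoint from $l$, which I don't have. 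Instead I argue directly: $v$ is disjoint from $l$, and if $v$ were in the same component as $\RG$, then concatenating a path from a short ray to $v$ with the edge $v$–$l$ would put $l$ at finite distance from $\RG$, contradicting high-filling; so $v$ is high-filling. Then, inductively, since high-filling rays are loop-filling, the same argument applies at $v$: every neighbor of $v$ in $\RGC$ is again a high-filling (in particular long) ray. By induction along paths, every vertex of $C$ is a high-filling ray.

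Finally I show $C$ is a clique, i.e. any two vertices of $C$ are adjacent (disjoint). Let $u, w \in C$. Since $C$ is connected, there is a path from $u$ to $w$; but it suffices to show that $u$ and $w$ are disjoint whenever they lie in the same component, which I do by showing they are each disjoint from some common loop-filling ray and invoking Lemma~\ref{lemma:disjoint_from_loop_filling}. Concretely: take any vertex $z$ on a path from $u$ to $w$ adjacent to both... this does not immediately work for long paths, so instead I argue that $u$ and $w$ are each disjoint from $l$ — but that also is not automatic. The clean route: all vertices of $C$ are high-filling hence loop-filling; pick two adjacent vertices $a, b$ on the path; $a$ is loop-filling and both $b$ and the next vertex $c$ are disjoint from $a$, so by Lemma~\ref{lemma:disjoint_from_loop_filling} $b$ and $c$ are disjoint — but they already are. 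The real point is: fix $u \in C$; for any $w \in C$, induct on path distance $d(u,w)$ that $u$ and $w$ are disjoint. If $d(u,w) = 1$ this is the edge. If $d(u,w) = n+1$, let $w'$ be a neighbor of $w$ with $d(u,w') = n$; by induction $u$ and $w'$ are disjoint, and $w$ is disjoint from $w'$; since $w'$ is loop-filling, Lemma~\ref{lemma:disjoint_from_loop_filling} applied to $w'$ with the two rays $u$ and $w$ gives that $u$ and $w$ are disjoint. Hence $C$ is a complete graph, and combined with the previous step it is a clique of high-filling rays.

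The step I expect to be the main obstacle is the inductive propagation that \emph{every} vertex of $C$ is high-filling: one must rule out both loops and short rays as vertices of $C$, and rule out that a long-ray neighbor fails to be ray-filling, all without having a short ray known to be disjoint from the loop-filling ray at hand. The resolution is to phrase "high-filling" via the combinatorial/graph-distance characterization (not in the component of $\RG$) rather than via ray-filling directly, so that adjacency to $l$ immediately forces a neighbor to also be out of the $\RG$-component; the geometric lemmas (Lemma~\ref{lemma:loop_filling_ray_filling} and Lemma~\ref{lemma:disjoint_from_loop_filling}) then only need to be invoked to get the clique (completeness) property.
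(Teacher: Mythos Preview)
Your opening sentence misstates the goal: you say you will show $C=\{l\}$, but this is false in general (the paper exhibits cliques with four mutually disjoint high-filling rays), and indeed your actual argument never uses or establishes this. Discard that sentence; the remainder of your proof is correct and follows essentially the paper's approach.

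The paper's version is shorter on the first half. Rather than analyzing neighbors of $l$ case by case, it simply observes that ``high-filling'' is, by definition, equivalent to ``not in the connected component of $\RGC$ containing the short rays.'' Hence $C$ contains no short rays; since every loop is adjacent to some short ray, $C$ contains no loops either; so every vertex of $C$ is a long ray not in the short-ray component, i.e.\ high-filling. Your inductive propagation along neighbors reaches the same conclusion but is unnecessary once you use the component characterization directly. For the clique property, the paper is actually terser than you (it only explicitly says that two rays disjoint from a fixed loop-filling $l$ are disjoint from each other), and your induction on path distance---applying Lemma~\ref{lemma:disjoint_from_loop_filling} at the penultimate vertex $w'$, which is loop-filling, to conclude $u$ and $w$ are disjoint---makes the collapse of distances fully explicit.
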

\begin{proof}
Let $C$ be a connected component of $\RGC$ containing a high-filling ray.  By the definition of high-filling rays, $C$ cannot contain any short rays.  This implies it cannot contain any loops (which always have some disjoint ray).  So any two elements of $C$ are high-filling rays.  Lemma~\ref{lemma:disjoint_from_loop_filling} implies that if two long rays $a$,$b$ are disjoint from $l$, then $a$,$b$ are also disjoint from each other.  Thus $C$ is a clique. 
\end{proof}

\begin{lemma}\label{lemma:cliques_compact}
As a subset of the boundary of the conical cover, every clique of high-filling rays is compact.
\end{lemma}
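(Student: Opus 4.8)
The plan is to show that $C$, viewed via the endpoint map as a subset of the circle $\partial\tilde S$, is \emph{closed}; since $\partial\tilde S$ is compact, this yields the lemma. Recall that each long ray is determined by the endpoint on $\partial\tilde S$ of its distinguished lift from $\tilde\infty$, so that $C$, regarded as a subset of $\partial\tilde S$, is exactly the set of these endpoints, and a sequence of rays of $C$ whose endpoints converge cover-converges to the ray determined by the limit point.

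So I would start from a sequence $(x_i)$ of rays in $C$ whose endpoints converge to $p\in\partial\tilde S$, set $x=\pi((\tilde\infty p))$, and note that $(x_i)$ cover-converges to $x$ and that, by Lemma~\ref{lemma:simple_is_compact}, $x$ is a simple short ray, loop, or long ray. The goal is to show $x\in C$. If every element of $C$ equals $x$, then $C=\{x\}$ (it is nonempty, containing the $x_i$) and we are done; otherwise fix $r\in C$ with $r\ne x$. Since the endpoint of $x$ differs from that of $r$ while the endpoints of the $x_i$ tend to that of $x$, we have $x_i\ne r$ for all large $i$, and then $x_i$ and $r$ are distinct vertices of the clique $C$, hence disjoint. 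Applying Lemma~\ref{lemma:cover_converge_limit} to $(x_i)$ and the constant sequence $r$ shows that $x$ and $r$ are disjoint; since $x\ne r$, they span an edge of $\RGC$, so $x$ lies in the connected component of $r$, which is $C$. By Lemma~\ref{lemma:cliques} every vertex of $C$ is a high-filling ray, so $x\in C$, and $C$ is closed.

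Nearly all of the content here is already packaged in Lemmas~\ref{lemma:simple_is_compact}, \ref{lemma:cover_converge_limit}, and \ref{lemma:cliques}, so I do not expect a serious obstacle. The only care needed is bookkeeping: exhibiting the limit ray $x$ as the cover-limit of an appropriate sequence so that Lemma~\ref{lemma:cover_converge_limit} applies, and dispatching the degenerate cases in which $x$ coincides with $r$ or with infinitely many of the $x_i$, where $x$ is already visibly a vertex of $C$.
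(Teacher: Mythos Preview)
Your proposal is correct and follows essentially the same route as the paper: reduce to showing the clique is closed in the compact circle $\partial\tilde S$, use Lemma~\ref{lemma:simple_is_compact} to know the cover-limit is simple, and use Lemma~\ref{lemma:cover_converge_limit} (with a constant sequence from the clique) to see that the limit is disjoint from a high-filling ray and hence lies in the same component, which is the clique by Lemma~\ref{lemma:cliques}. The paper's version omits your bookkeeping about the degenerate cases $r=x$ and $x_i=r$, but the argument is otherwise identical.
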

\begin{proof}
The boundary of the conical cover is compact, so it suffices to show that every clique is closed under cover-convergence.  To see this, suppose we have a sequence $(l_i)$ of high-filling rays, all pairwise disjoint, which cover-converges to a ray or loop $l$.  By Lemma~\ref{lemma:simple_is_compact}, $l$ must be simple, and by Lemma~\ref{lemma:cover_converge_limit}, it must be disjoint from all the $l_i$.  But since $l$ is disjoint from all (any) of the $l_i$, it cannot be a short ray or loop because all the $l_i$ are high-filling.  Therefore, $l$ is a high-filling ray, and as it is disjoint from all (any) of the $l_i$, it is in the same clique.  So the clique is closed under cover-convergence, and we are done.
\end{proof}

\subsection{Connected components of $\RGC$}

It was shown in \cite{Juliette} that the loop graph and the ray graph are quasi-isometric. More precisely, a quasi-isometry from the ray graph to the loop graph is given by any map which sends a short ray to a disjoint loop (see \cite{Juliette}, section 3.2). This implies that the natural inclusion from the loop graph to the short-ray-and-loop graph is a quasi-isometry. Thus the ray graph, the loop graph, and the short-ray-and-loop graph are quasi-isometric. We will now show that these graphs are also quasi-isometric to the connected component of $\RG$ inside $\RGC$.

\begin{figure}[htb]
\labellist
\endlabellist
\centering
\includegraphics[scale=0.4]{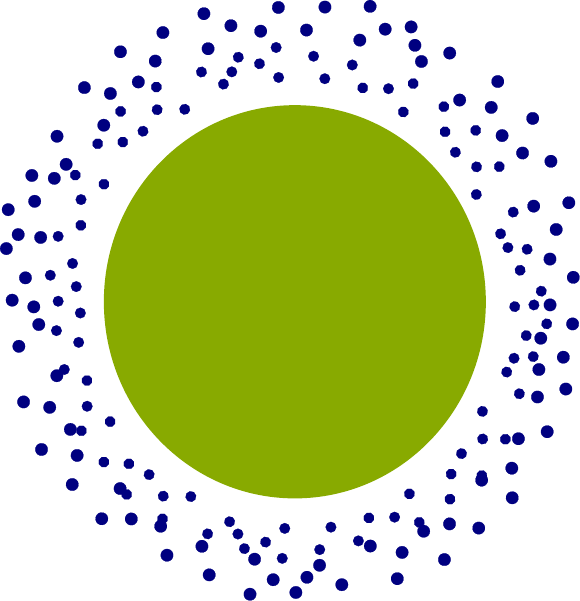}
\caption{A representation of the connected components of the completed ray graph: the main one is infinite diameter -- it contains the short-ray-and-loop graph and is quasi-isometric to it; others are cliques, of diameter $0$ or $1$. We will give a bijection between the Gromov-boundary of the main one and the set of cliques (Theorem~\ref{theorem:boundary_bijection}).}
\label{figu:completed_ray_graph}
\end{figure}

\begin{theorem}\label{theorem:completed_components}
There is a single connected component of $\RGC$ containing all the short rays and loops.  This component is quasi-isometric to the ray graph $\RG$, to the loop graph, a to the short-ray-and-loop graph.  All other connected components (which are the high-filling rays) are cliques (complete subgraphs).
\end{theorem}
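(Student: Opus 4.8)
The plan is to prove, in turn, that a single component $M$ of $\RGC$ carries all short rays and loops, that every other component is a clique, and that $M$ is quasi-isometric to the three graphs in the statement. For the first point: since $\RG$ is connected (by \cite{Juliette}), all short rays lie in one component $M$ of $\RGC$, and I would then observe that every loop is disjoint from some short ray. Indeed, a nontrivial loop $l$ cuts $\Sph^2$ into two Jordan domains, each meeting $K$, and a short ray run from $\infty$ into either domain while avoiding $l$ is disjoint from $l$ (exactly as in the proof of Lemma~\ref{lemma:loop_filling_ray_filling}), while a trivial loop bounds a disk disjoint from almost every short ray. Hence every loop is adjacent in $\RGC$ to a short ray, so $M$ contains all loops as well, which is the first sentence of the theorem.

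\emph{Other components are cliques.} Let $C \ne M$ be another component. It contains no short ray, and hence no loop (a loop is adjacent to a short ray, which would then lie in $C$), so every vertex of $C$ is a long ray; and no vertex of $C$ is $k$-filling for any $k$, since a $k$-filling ray is joined by a path in $\RGC$ to a short ray, which would then lie in $C$. Thus every vertex of $C$ is high-filling, and Lemma~\ref{lemma:cliques} shows $C$ is a clique. Conversely, Lemma~\ref{lemma:no_k_filling} shows the only long rays in $M$ are the $1$- and $2$-filling ones, so the components other than $M$ are precisely the cliques of high-filling rays.

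\emph{The quasi-isometries.} By the discussion preceding the theorem, $\RG$, the loop graph, and the short-ray-and-loop graph $G$ are pairwise quasi-isometric, and $G$ is a subgraph of $M$; so it suffices to prove the inclusion $\iota\colon G \hookrightarrow M$ is a quasi-isometry. Coarse surjectivity is immediate from Lemma~\ref{lemma:no_k_filling} and the remark after it: every long ray in $M$ is $k$-filling with $k \le 2$, hence within $M$-distance $2$ of a short ray. Since $G$ is a subgraph, $d_M \le d_G$ on $G$, and the remaining point is to bound $d_G(x,y)$ linearly in $d_M(x,y)$ for $x,y \in G$. Here is how I would proceed. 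Take an $M$-geodesic $x = v_0, v_1, \dots, v_n = y$. No loop-filling ray occurs as an interior vertex $v_i$: by Lemma~\ref{lemma:disjoint_from_loop_filling}, $v_{i-1}$ and $v_{i+1}$ would then be disjoint, and the path $v_0,\dots,v_{i-1},v_{i+1},\dots,v_n$ would be shorter. In particular, since every $2$-filling ray is ray-filling, hence loop-filling by Lemma~\ref{lemma:loop_filling_ray_filling}, no $v_i$ is $2$-filling; so every interior $v_i$ is a short ray, a loop, or a $1$-filling long ray that is not loop-filling. It then remains to replace each such long ray (and each maximal run of consecutive such long rays) by a nearby vertex of $G$ at bounded cost and concatenate, yielding $d_G(x,y) \le C\, d_M(x,y) + C$.

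\emph{The main obstacle.} The heart of the matter is exactly that last replacement step: absorbing a $1$-filling, non-loop-filling long ray $v$ that genuinely sits on an $M$-geodesic into $G$ without distorting distances. For \emph{loop-filling} rays Lemma~\ref{lemma:disjoint_from_loop_filling} gives this for free, since the neighbourhood of such a ray is a clique; but for a non-loop-filling $v$ one cannot merely use that two vertices are disjoint from $v$, and it is not obvious \emph{a priori} that the set of short rays disjoint from $v$ is bounded in $\RG$. I would resolve this by a surgery argument in the spirit of the proof of Lemma~\ref{lemma:disjoint_from_loop_filling} — cutting competitor rays at their first crossing and using a loop disjoint from $v$ to reroute them — or, alternatively, by carrying out the whole distance comparison in the loop graph, where the surgery combinatorics is cleaner. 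Modulo this step, the rest of the argument is routine bookkeeping.
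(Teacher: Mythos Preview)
Your overall architecture is correct and matches the paper's: one component from connectedness of $\RG$ plus ``every loop is disjoint from some short ray''; cliques from Lemma~\ref{lemma:cliques}; quasi-surjectivity from Lemma~\ref{lemma:no_k_filling}; and for the distance comparison, take a geodesic in $\RGC$ and observe via Lemma~\ref{lemma:disjoint_from_loop_filling} that no interior vertex can be loop-filling. The place you flag as the obstacle is exactly where the paper does the real work, and you have not yet filled it.

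The paper's resolution is cleaner than the direct replacement you sketch. Rather than trying to swap each non-loop-filling long ray $v_i$ for a short ray while preserving adjacency to \emph{both} neighbours, the paper does two things. First, since each $\lambda_i$ on the geodesic is not loop-filling, choose any loop $a_i$ disjoint from $\lambda_i$; this costs nothing and requires no control on the neighbours. Second, observe that $a_i,\lambda_i,\lambda_{i+1},a_{i+1}$ is a length-$3$ path in $\RGC$ between two \emph{loops}, and prove the technical Lemma~\ref{lemma:short_rays_enough}: two loops at distance $3$ in $\RGC$ are already at distance at most $3$ in the short-ray-and-loop graph. That lemma is the surgery argument you anticipate (using equatorial segments that $\lambda$ or $\mu$ meet but $a,b$ do not, and a nested-disk construction when $\lambda$ hits one segment infinitely often), and it gives the clean bound $d_G(a,b)\le 3\,d_{\RGC}(a,b)+2$. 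Your proposed route---replacing $v_i$ directly, or bounding the diameter in $\RG$ of the short rays disjoint from a fixed $1$-filling ray---would require essentially the same surgery but organised less conveniently; the paper's trick of passing to an auxiliary loop $a_i$ decouples the replacement step from the adjacency constraints and isolates the surgery into a single self-contained lemma about paths of length $3$.
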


We start with a technical lemma.

\begin{lemma}\label{lemma:short_rays_enough}
Let $a$ and $b$ be two loops at distance $3$ in the completed ray graph. Then they are at distance $3$ in the short-ray-and-loop graph.
\end{lemma}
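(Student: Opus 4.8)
The plan is to take an $\RGC$--geodesic $a = v_0, v_1, v_2, v_3 = b$ and modify its two interior vertices into short rays or loops without increasing the length. Write $G$ for the short-ray-and-loop graph; since $G$ is a subgraph of $\RGC$ we already have $d_G(a,b)\geq d_{\RGC}(a,b)=3$, so it suffices to exhibit a path of length at most $3$ in $G$ from $a$ to $b$, i.e.\ a path $a, q_1, q_2, b$ with $q_1,q_2$ short rays or loops. A principle I expect to use repeatedly: \emph{a long ray cannot be confined to a complementary region avoiding $K$}. Concretely, if the geodesic tail of a long ray lies in a region $R$ of $S$ whose closure in $\Sph^2$ meets $K\cup\{\infty\}$ only along its topological frontier, then $R$ is an honestly embedded open disk in $S$, and a hyperbolic geodesic ray trapped in an embedded disk converges on the boundary circle of the disk -- contradicting the definition of a long ray.

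\textbf{Cheap structural facts.} Since $v_1$ is disjoint from the loop $a$ it is not loop-filling, hence by Lemma~\ref{lemma:loop_filling_ray_filling} not ray-filling; likewise $v_2$ is not ray-filling, so neither is high-filling (consistent with their lying in the main component). Also $a\cap b$, $a\cap v_2$ and $v_1\cap b$ are all nonempty, since otherwise the geodesic could be shortened below length $3$. If $v_1$ and $v_2$ are already short rays or loops, we are done; so assume at least one is a long ray.

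\textbf{The main move: surgery in a finite-graph complement.} Suppose $v_2$ is a long ray and $v_1$ is a short ray or loop. Then $v_2$ is disjoint from the finite connected graph $v_1\cup b\cup\{\infty\}$, so it lies in a single complementary region $R$ of $S\setminus(v_1\cup b)$, and $\overline R$ is a closed disk in $\Sph^2$ with $\infty$ accessibly on its frontier. By the confinement principle $R$ must contain a point of the Cantor set, hence infinitely many; choose such a point $c$ (distinct from the endpoint of $v_1$ if $v_1$ is a short ray) and join $\infty$ to $c$ by an embedded arc inside $\overline R$. This arc is a short ray $q_2$ disjoint from $v_1$ and from $b$, so $a, v_1, q_2, b$ is the desired path in $G$. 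The case where $v_1$ is long and $v_2$ is a short ray or loop is symmetric.

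\textbf{The hard case: both $v_1$ and $v_2$ long.} Here one must replace \emph{both}, and the obstacle is that fencing a long ray against another long ray need not produce a disk-like region (e.g.\ a long ray may spiral onto a closed geodesic, whose complementary region is an annulus), so the previous move does not directly apply. The plan is to first replace $v_1$ by a short ray $\sigma$ that is still disjoint from $a$ and from $v_2$, and then finish by the surgery above applied to $v_2$ with the finite fence $\sigma\cup b$. To produce $\sigma$ I would analyse the geodesic lamination $\overline{v_1}$, which is contained in the side $\overline D$ of $a$ containing $v_1$. If $\overline{v_1}$ has a leaf converging to a point of $K$, that leaf is a short ray $\sigma$: it is disjoint from $v_1$ (which only accumulates on it), disjoint from $v_2$ (which cannot cross a leaf of $\overline{v_1}$ without crossing $v_1$), and meets $a$ only at $\infty$ by a cusp argument. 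Otherwise $\overline{v_1}$ is a compact lamination supported on a subsurface $Y\subseteq\overline D$ of finite type; since $D$ has infinitely many Cantor ends but $Y$ has only finitely many, one tries to route a short ray from $\infty$ to a Cantor end outside $Y$ while staying off $\overline{v_1}$, using that $v_2$ is disjoint from the filling lamination of $Y$ together with the confinement principle. Making this routing simultaneously avoid $v_2$ (equivalently: ruling out that $v_2$, or a boundary curve of $Y$, fences $\infty$ off from all Cantor ends reachable without meeting $\overline{v_1}$) is the step I expect to be the main technical difficulty; once $\sigma$ is obtained, the path $a,\sigma,q_2,b$ completes the proof as in the previous paragraph.
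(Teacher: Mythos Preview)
Your reduction and the mixed case (one of $v_1,v_2$ short or a loop, the other long) are fine and essentially match the spirit of the paper's argument: trap the long ray in a complementary region of a finite graph, observe that the region must contain Cantor points since a geodesic long ray cannot live in a puncture-free disk, and draw a short ray there.

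The gap is exactly where you say it is: the case where both $v_1$ and $v_2$ are long.  Your plan to study the lamination closure $\overline{v_1}$ is plausible but not carried out; you explicitly flag the step ``making this routing simultaneously avoid $v_2$'' as unresolved, and indeed nothing in your sketch rules out that every short ray in the relevant complementary piece of $\overline{v_1}$ is forced to cross $v_2$.  So as written this is not a proof.

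The paper avoids lamination theory entirely by working with the equator.  The key dichotomy is: a long ray $\lambda$ either meets infinitely many distinct equatorial gaps, or meets some single gap $I$ infinitely often.  In the first case there is a gap $I$ met by $\lambda$ but by neither loop $a$ nor $b$, and one short-cuts $\lambda$ (and $\mu$, if it also meets $I$) to a Cantor point through $I$.  In the second case, two successive crossings of a subinterval $I'\subset I$ (chosen disjoint from $a$ and $b$) together with the arc of $\lambda$ between them bound a disk $D_\lambda$ which must contain Cantor points.  If $\mu$ avoids $D_\lambda$ one short-cuts $\lambda$ into $D_\lambda$ and recurses on $\mu$; if $\mu$ enters $D_\lambda$, one refines to a subdisk $D_\lambda'\subset D_\lambda$ disjoint from $b$ (using that $b$ meets $D_\lambda$ only finitely often) which still traps the relevant part of $\mu$, and then both $\lambda$ and $\mu$ can be short-cut \emph{simultaneously} to Cantor points inside $D_\lambda'$.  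This last simultaneous short-cut is precisely the device that handles the ``both long'' case without any appeal to laminations, and it is what your argument is missing.
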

\begin{proof}
Let $(a,\lambda,\mu,b)$ be a path in the completed ray graph.  We will show how to replace $\lambda$ and $\mu$ with short rays, which will prove the lemma.  This proof proceeds by chasing down various cases.

Recall the equator is divided into segments by the Cantor set.  For each of $\lambda$ and $\mu$, there are two cases depending on whether they intersect finitely many or infinitely many equatorial segments.  Suppose that $\lambda$ intersects infinitely many segments.  As $a$ and $b$ are loops, they can only intersect finitely many segments.  Hence there is some segment $I$ which $\lambda$ intersects but $a$ and $b$ do not.  We can therefore connect $\lambda$ to a point in the Cantor set with a path disjoint from $a$ and $b$, producing a short ray (this short ray is disjoint from $a$ because $\lambda$ is, but might not be disjoint from $b$).  If $\mu$ intersects $I$ as well, so much the better: we can connect it as well, producing two short rays.  See Figure~\ref{figu:lemma_short_rays_are_enough_1}. 

\begin{figure}[htb]
\centering
\includegraphics[scale=2.0]{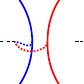}
\caption{If $\lambda$ (and possibly $\mu$) intersect an equatorial segment which neither $a$ nor $b$ do, then we can connect their first intersections to the Cantor set with disjoint paths.  Note $\lambda$ and $\mu$ are allowed to crash through this picture in any way; as long as we take the first points of intersection, the resulting short rays are disjoint}.
\label{figu:lemma_short_rays_are_enough_1}
\end{figure}

 In the latter case, we are done: we have produced a path of length $3$ between $a$ and $b$ in the short-ray-and-loop-graph.  In the former case, we may now assume that $\lambda$ is actually a short ray intersecting the equator finitely many times.  If $\mu$ intersects infinitely many segments, we repeat the argument and we are done.

Hence we have two cases: $\lambda$ and $\mu$, long rays, both intersect finitely many equatorial segments, or $\lambda$ is actually a short ray and $\mu$ intersects finitely many segments.  Potentially by switching the roles of $\lambda$ and $\mu$, then, we may assume that $\lambda$ is a long ray intersecting finitely many segments.  Since $\lambda$ intersects finitely many segments, there must be some segment $I$ it intersects infinitely many times. Since $a$ and $b$ intersect the equator finitely many times, there must be some subsegment $I'\subseteq I$ such that $\lambda$ intersects $I'$ at least twice, and $a$, $b$ intersect it not at all.   Let $p_1,p_2$ be the first and second points of intersection of $I'$ and $\lambda$ as we travel along $\lambda$ from~$\infty$.  The union of the interval along $\lambda$ between $p_1$ and $p_2$ and the interval along $I'$ between $p_1$ and $p_2$ is a simple closed curve $\gamma_\lambda$.  This curve divides the sphere into two connected components, one of which does not intersect $a$ (because $a$ is disjoint from both $I'$ and $\lambda$.  Call this component $D_\lambda$.  Note that $D_\lambda$ must contain some points in the Cantor set because $\lambda$ is geodesic.

There are several cases depending on the status of $\mu$.  If $\mu$ is disjoint from $D_\lambda$, then we can easily connect $\lambda$ to a Cantor set point in the interior with a path.  This replaces $\lambda$ with a short ray, and we can swap the roles of $\lambda$ and $\mu$ and go back to the beginning of this case to handle $\mu$.  See Figure~\ref{figu:lemma_short_rays_are_enough_2}.

\begin{figure}[htb]
\centering
\labellist
\pinlabel $\lambda$ at 11 67
\pinlabel $a$ at 11 62.5
\pinlabel $b$ at 11 58
\pinlabel $I$ at 11 54
\pinlabel $D_\lambda$ at 45 10
\pinlabel $p_1$ at 36.5 44
\pinlabel $p_2$ at 45.5 44
\endlabellist
\includegraphics[scale=1.9]{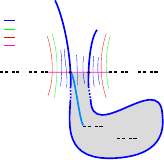}
\caption{If $\mu$ is disjoint from $D_\lambda$, we just replace $\lambda$ 
with a short ray which simply dead ends to any Cantor set point in $D_\lambda$ 
(the change is shown in lighter blue).}
\label{figu:lemma_short_rays_are_enough_2}
\end{figure}

If $\mu$ does intersect $D_\lambda$, then we must consider $b$.  Note that $b$ can intersect $D_\lambda$ finitely many times.  Hence there is a subdisk $D_\lambda' \subseteq D_\lambda$ disjoint from $b$ and containing the interval $I'$.  Since $\mu$ is disjoint from $\lambda$ and $b$, it must be that $\mu \cap D_\lambda$ is contained within $D_\lambda'$.  As $\mu$ is geodesic, we must have some Cantor set points within $D_\lambda'$.  Since $a$ and $b$ are both disjoint from $D_\lambda'$, we can connect both $\lambda$ and $\mu$ to Cantor set points with paths in $D_\lambda'$.  This replaces $\lambda$ and $\mu$ with short rays $\lambda'$, $\mu'$ with the same disjointness properties, producing the desired path in the short-ray-and-loop graph.  See Figure~\ref{figu:lemma_short_rays_are_enough_3}.

\begin{figure}[htb]
\centering
\labellist\small
\pinlabel $\lambda$ at 11 74
\pinlabel $a$ at 11 69
\pinlabel $b$ at 11 65.5
\pinlabel $I$ at 11 61.5
\pinlabel $\mu$ at 11 56.5
\pinlabel $D_\lambda'$ at 39 30
\large
\pinlabel $D_\lambda$ at 56 22
\endlabellist
\includegraphics[scale=2.0]{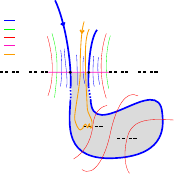}
\labellist\small
\pinlabel $\lambda'$ at 11 74
\pinlabel $a$ at 11 69
\pinlabel $b$ at 11 65.5
\pinlabel $I$ at 11 61.5
\pinlabel $\mu'$ at 11 56.5
\endlabellist
\includegraphics[scale=2.0]{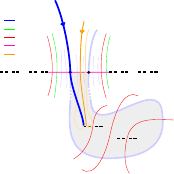}
\caption{If $\mu$ is not disjoint from $D_\lambda$, then we create $D_\lambda'$.  Here we can short-cut both $\lambda$ and $\mu$, as shown on right.  Note it only matters that the new paths are disjoint from each other, not the original $\mu$.  For example, $\mu$ could even be ray-filling in $D_\lambda'$.}
\label{figu:lemma_short_rays_are_enough_3}
\end{figure}

\end{proof}

\begin{proof}[Proof of Theorem~\ref{theorem:completed_components}]
We recall from~\cite{Juliette} the definition of a unicorn path between two loops in the loop graph (or, see Section~\ref{section:finite_unicorn_paths}).  The precise definition is not important: all we need is that this operation produces a path between any two loops.  Thus the loop graph is connected.

By~\cite{Juliette}, we already know that the ray graph $\RG$ is quasi-isometric to the loop graph and short-ray-and-loop graph.  The loop graph is connected, so these other graphs are too.  Because these graphs are quasi-isometric, we may conflate their definitions and show that the natural inclusion $\RG \to \RGC$ of the short-ray-and-loop graph into its connected component in $\RGC$ is a quasi-isometry.  Note all this means is that we must show that lengths in $\RG$ are not distorted when we add the ability to take ``detours'' using long rays in $\RGC$.  First we show it is a quasi-isometric embedding and then prove it is quasi-surjective.

Consider two loops or short rays $a$ and $b$ at distance $n \in \N$ in the completed ray graph. There exists two loops $a'$ disjoint from $a$ and $b'$ disjoint from $b$. Consider a geodesic path $(\lambda_i)$ between $a'$ and $b'$ in the completed ray graph. We claim that every element of this path is not loop-filling: by contradiction, if some $\lambda_j$ is loop-filling, then because $\lambda_j$ is different from $a'$ and $b'$, $\lambda_{j-1}$ and $\lambda_{j+1}$ are well-defined vertices in the path, and according to Lemma~\ref{lemma:disjoint_from_loop_filling}, they are disjoint. This gives a contradiction because $(\lambda_i)$ is geodesic.

For every $i$ such that $\lambda_i$ is well-defined, we choose a loop $a_i$ disjoint from $\lambda_i$. According to Lemma~\ref{lemma:short_rays_enough}, $a'_i$ and $a'_{i+1}$ at at distance at most $3$, hence the distance between $a'$ and $b'$ in the short-ray-and-loop graph is at most $3$ times the distance between $a'$ and $b'$ in the completed ray graph. Hence we have a path of length $3n+2$ between $a$ and $b$ in the short-ray-and-loop graph.  Thus we conclude that $\frac{1}{3}d_\RG(a,b) -2 \le d_\RGC(a,b) \le d_\RG(a,b)$ so the inclusion is a quasi-isometry.  Lemma~\ref{lemma:no_k_filling} immediately implies that this inclusion is quasi-surjective.

Finally, Lemma~\ref{lemma:cliques} says that all other connected components  of $\RGC$ must be cliques because they are composed of high-filling rays.
\end{proof}

\begin{remark}
After Theorem~\ref{theorem:completed_components}, we will conflate the definitions of the ray graph, the loop graph, and the short-ray-and-loop graph, using whichever is most convenient.  We'll refer to all of these graphs, and also to the quasi-isometric connected component of $\RGC$, by $\RG$.  We'll also call this component the \emph{main component}, as distinguished from the \emph{cliques}.  We show in Section~\ref{section:bijection} that the
cliques are in bijection with the Gromov boundary of $\RG$.
\end{remark}

\section{Infinite unicorn paths}\label{section:unicorn_paths}

\subsection{Outline}

We are interested in understanding the boundary of $\RG$.  We will use the same general idea as \cite{Pho-On} of \emph{infinite unicorn paths}, although with a slightly different definition.  That idea, in turn, was motivated by unicorn paths in the curve graph, as defined in \cite{Hensel-Przytycki-Webb}.  See \cite{Juliette} for an introduction to unicorn paths in the loop graph.  In this section, it is most convenient to work with the loop graph, although we recall that there is a bijection between its boundary and the boundary of the ray graph (and the short-ray-and-loop graph and the main component of $\RGC$).

\subsection{Finite unicorn paths}
\label{section:finite_unicorn_paths}

There are two equivalent definitions of a finite unicorn path.  We choose to use a slightly different definition than that which appears in~\cite{Juliette, Hensel-Przytycki-Webb} because it has some nice properties.  However, it is therefore important for us to explain both definitions and prove their equivalence.

\begin{definition}[Unicorn path, first definition, \cite{Hensel-Przytycki-Webb,Juliette}]\label{def:unicorn_1}
Let $a,b$ be loops representing vertices in the loop graph.  Let them be oriented.  We define the \emph{unicorn path} $P_1(a,b)$ between $a$ and $b$ as follows.  If $a,b$ are disjoint, then the unicorn path is simply the pair $(a,b)$.  If $a,b$ intersect, then we order their points of intersection $x_1, x_2, \ldots, x_k$ in the reverse order they appear along $a$.  We define $a_i = (\infty x_i)_b \cup (\infty x_i)_a$; that is, the loop which is the union of the interval along $b$ from $\infty$ to $x_i$ and the interval along $a$ from $\infty$ to $x_i$ (the beginnings of both $a$ and $b$ until they reach $x_i$).  It is possible that this loop is not simple.  The unicorn path $P_1(a,b)$ is the sequence $(a,a_{i_1}, a_{i_2}, \ldots, a_{i_n}, b)$, where $i_1, i_2, \ldots$ 
are the indices of the simple loops defined above.  That is, we define all the loops as above, then select just the simple ones, and that is the unicorn path.  Note that because we order the points of intersection in the reverse order they appear along $a$, we will get a little less of $a$ at each step along the path, and thus we interpolate between $a$ and $b$
\end{definition}

\begin{definition}[Unicorn path, second definition]\label{def:unicorn_2}
Let $a,b$ be loops representing vertices in the loop graph.  Let them be oriented.  We define the \emph{unicorn path} $P_2(a,b)$ as follows.  Set $a_0 = a$, and proceed inductively.  Let $x_k$ be the first intersection between $b$ and $a_k$ as we travel along $b$ from $\infty$.  If such an intersection does not exist, then $a_{k+1} = b$ and we are done.  Otherwise, set $a_{k+1} = (\infty x_k)_{b} \cup (\infty x_k)_{a_k}$.  That is, the union of the interval along $b$ from $\infty$ to $x_k$ followed by the reverse of the interval along $a_k$ from $\infty$ to $x_k$.  Note that $a_k$ is always simple, and $a_k, a_{k+1}$ are always disjoint.  The unicorn path is the sequence $P_2(a,b) = (a=a_0, a_1, \ldots, b)$.
\end{definition}

\begin{lemma}\label{lemma:unicorn_defs_equiv}
Definitions~\ref{def:unicorn_1} and~\ref{def:unicorn_2} are equivalent.  If $P_1(a,b) = (a=x_0, x_1, \ldots, x_n, b)$ and $P_2(a,b) = (a=y_0, y_1, \ldots, y_m, b)$, then $n=m$ and $x_i=y_i$ for all $i$.
\end{lemma}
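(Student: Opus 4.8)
The plan is to show that the two inductive constructions produce exactly the same sequence of loops by induction on the step index, matching up the intermediate loops $a_i$ of Definition~\ref{def:unicorn_1} with the loops $a_k$ of Definition~\ref{def:unicorn_2}. The key observation is that in Definition~\ref{def:unicorn_1}, the loop $a_i = (\infty x_i)_b \cup (\infty x_i)_a$ is built by truncating $b$ at $x_i$ and then following all of $a$ up to $x_i$; whereas in Definition~\ref{def:unicorn_2}, the loop $a_{k+1} = (\infty x_k)_b \cup (\infty x_k)_{a_k}$ truncates $b$ at $x_k$ and follows $a_k$ up to $x_k$. Since at each stage $a_k$ is itself of the form ``(some initial segment of $b$) followed by (some initial segment of $a$)'', and since $x_k$ is the first intersection of $b$ with $a_k$ along $b$, the portion of $a_k$ between $\infty$ and $x_k$ along $a_k$ is in fact a pure initial segment of $a$. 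This reduces the whole construction to bookkeeping about initial segments of $a$.

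First I would set up the following notation: for a point $x$ on $a$, write $a[x]$ for the initial segment of $a$ from $\infty$ to $x$, and similarly $b[x]$ for $b$; then both definitions produce loops of the form $b[x]\cup a[x]$ for various intersection points $x$ of $a$ and $b$. I would prove by induction on $k$ that the loop $y_k$ produced by Definition~\ref{def:unicorn_2} equals $b[x_{i_k}]\cup a[x_{i_k}]$ where $x_{i_k}$ is the $k$-th intersection point (in reverse order along $a$) for which this loop is \emph{simple}. The base case $k=0$ is $y_0 = a = x_0$. For the inductive step: given $y_k = b[x_{i_k}]\cup a[x_{i_k}]$ simple, the next intersection point $x_k$ of $b$ with $y_k$ along $b$ is the first point where $b$ meets $a[x_{i_k}]$ (it cannot meet the $b[x_{i_k}]$ part before that by simplicity considerations and the ordering along $b$), and one checks this first intersection corresponds precisely to the next index $i_{k+1}$ in the first definition — i.e. the next point (going backwards along $a$ from $x_{i_k}$) at which the corresponding loop is simple — and that the intermediate loops skipped (the non-simple ones) are exactly the same in both constructions. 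The equality $y_{k+1} = b[x_{i_{k+1}}]\cup a[x_{i_{k+1}}] = x_{k+1}$ then follows directly.

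The main obstacle, and the place where the bookkeeping needs care, is verifying that the ``first intersection of $b$ with $a_k$ along $b$'' in Definition~\ref{def:unicorn_2} genuinely lands on the $a[x_{i_k}]$ part of $a_k$ and not on the $b[x_{i_k}]$ part, and moreover lands at the correct point relative to the reverse-order-along-$a$ enumeration. For this I would argue: (1) because $a$ and $b$ are in minimal position, an initial segment of $b$ cannot self-intersect, so $b$ does not cross the $b[x_{i_k}]$ sub-arc of $a_k$ except possibly at its shared endpoint with the $a$-part; hence the first new intersection along $b$ is on the $a$-part; (2) the $a$-part of $a_k$ is exactly $a[x_{i_k}]$, and the points of $a$ lying on it are precisely $x_j$ with $x_j$ appearing before $x_{i_k}$ along $a$, i.e. $j > i_k$ in the reverse enumeration; (3) among these, the relevant one is the \emph{first along $b$}, which after unwinding the definition of which $a_i$ are simple matches the next retained index $i_{k+1}$. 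Once these three points are nailed down, the induction closes and we get $n=m$ and $x_i=y_i$ for all $i$. I would also note the orientation subtlety flagged in the text (loops are oriented, and $b$'s reverse is taken in forming $a_{k+1}$), but this only affects which of two parametrizations of the same arc we use and does not change the underlying loop.
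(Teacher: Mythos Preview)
Your overall framework is the same as the paper's: both argue by induction that the $k$-th loop in $P_2$ equals the $k$-th retained loop in $P_1$, after first observing that every $y_k$ has the form $b[x]\cup a[x]$ for some intersection point $x$. Steps (1) and (2) of your inductive argument are fine.

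The gap is in step (3). You assert that the first intersection of $b$ with $a[x_{i_k}]$ along $b$ ``after unwinding the definition of which $a_i$ are simple matches the next retained index $i_{k+1}$,'' but this is precisely the content of the lemma and you have not supplied the argument. Concretely: write $p=x_{i_k}$ and let $p'$ be that first intersection along $b$. You need to show that for every intersection point $q$ lying strictly between $p'$ and $p$ along $a$, the loop $b[q]\cup a[q]$ is \emph{not} simple; only then can you conclude that $p'$ is the next retained index. The paper's argument for this is short but essential: such a $q$ cannot lie before $p$ along $b$ (else $y_k=b[p]\cup a[p]$ would self-intersect at $q$), and by the definition of $p'$ as the \emph{first} intersection after $p$ along $b$, $q$ must therefore lie after $p'$ along $b$. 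But then $b[q]$ passes through $p'$, and $a[q]$ also passes through $p'$ (since $p'$ lies between $\infty$ and $q$ along $a$), so $b[q]\cup a[q]$ self-intersects at $p'$. This is the missing step; once you add it, your proof coincides with the paper's.

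A minor remark on your step (1): the reason $b$ does not meet the $b[x_{i_k}]$-part of $a_k$ is simply that $b$ is simple, not minimal position of $a$ and $b$.
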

\begin{proof}
Although the definition of $P_2$ is inductive, we can still think of each element of the unicorn path (second definition) as the union of a beginning interval of $b$ and a beginning interval of $a$ (if we simply don't deform $a_k$, it is clearly the union of an interval on $a$ and an interval on $b$; $a_{k+1}$ is created by simply going farther along $b$ before changing to $a$).  Hence each $y_i$ does appear in the sequence $P_1(a,b)$.  We will prove that $x_i=y_i$ for all $i$ inductively.  Clearly $x_0=a=y_0$ by definition.  Now suppose that $x_i = y_i$.  Write
\[
y_i = (\infty p)_b \cup (\infty p)_a \qquad \textnormal{and} \qquad y_{i+1} = (\infty p')_b \cup (\infty p')_a
\]
That is, $y_i$ is constructed by following $b$ until the intersection point $p$ and then $a$ backward, and $y_{i+1}$ is constructed by following $b$ until the point $p'$ (which is farther along $b$, note) and then $a$ backward.  Note that in the interval $(p,p')$ along $a$, we might find other points of intersection.  If any of these points of intersection can produce a simple loop, then these loops will appear between $x_i=y_i$ and $y_{i+1}$ in the first unicorn path $P_1(a,b)$.  Thus if we can show that all these loops are, in fact, non-simple, then we will know that $y_{i+1}=x_{i+1}$.

\begin{figure}[htb]
\labellist
\pinlabel $a$ at 10 29
\pinlabel $b$ at 40 20
\pinlabel $p$ at 32 33
\pinlabel $p'$ at 92 37
\pinlabel $q$ at 71 37
\pinlabel $y_i$ at 34 60
\pinlabel $y_{i+1}$ at 13 60
\endlabellist
\includegraphics[scale=1.7]{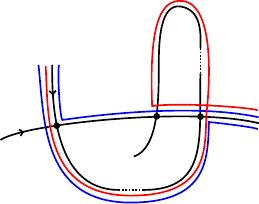}
\caption{The proof of Lemma~\ref{lemma:unicorn_defs_equiv}.  Any intersection of $b$ with the interval $(p,p')$ along $a$ must come after $p'$.  Hence any loop produced by Definition~\ref{def:unicorn_1} from any of these intersections will not be simple, and since $y_i=x_i$, we must have $y_{i+1}=x_{i+1}$.}
\label{figu:unicorn_defs_are_the_same}
\end{figure}

Let $q$ be a point between $p$ and $p'$ along $a$.  Were $q$ to appear before $p$ along $b$, then $y_i$ would not be simple (it would have a self-intersection at $q$); this is a contradiction to our assumption that it \emph{is} simple, so we know that $q$ must appear \emph{after} $p'$ along $b$.  But now if we take the union 
\[
(\infty q)_b \cup (\infty q)_a
\]
then this loop cannot be simple, as it must have an essential self-intersection at $p'$.  See Figure~\ref{figu:unicorn_defs_are_the_same}.

Therefore no intersection point in the interval $(p,p')$ along $b$ can contribute a loop to the sequence $P_1(a,b)$, since none of them are simple.  We conclude that $x_{i+1}=y_{i+1}$ and the lemma is proved by induction.
\end{proof}

Following Lemma~\ref{lemma:unicorn_defs_equiv}, we may now freely use our preferred Definition~\ref{def:unicorn_2} but cite facts about Definition~\ref{def:unicorn_1}.  Hereafter, we define \emph{the} unicorn path $P(a,b)$ to be $P_2(a,b)$ and we will assume that construction.  But we may use the following lemmas, which were were adapted in~\cite{Juliette} from lemmas in~\cite{Hensel-Przytycki-Webb}.

\begin{lemma}[Lemma $3.3$ of \cite{Juliette}]\label{lemma:subpaths_of_unicorn_paths}
For every $0\leq i <j\leq n$, either $P(a_i,a_j)$ is a subpath of $P(a,b)=\{a=a_0,a_1,...,a_n=b\}$, or $j=i+2$ and $a_i$ and $a_j$ represent adjacent vertices of the loop graph.
\end{lemma}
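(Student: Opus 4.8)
The plan is to work from the explicit description of unicorn arcs. Recall from Definition~\ref{def:unicorn_2} together with Lemma~\ref{lemma:unicorn_defs_equiv} that each interior vertex $a_\ell$ ($0<\ell<n$) of $P(a,b)$ is built from a single intersection point $\pi_\ell$ of $a$ and $b$, namely $a_\ell=(\infty\,\pi_\ell)_b\cup(\infty\,\pi_\ell)_a$ is the initial arc of $b$ from $\infty$ to $\pi_\ell$ together with the initial arc of $a$ from $\infty$ to $\pi_\ell$, these two arcs meeting only at $\infty$ and $\pi_\ell$; and that as $\ell$ grows the points $\pi_\ell$ move monotonically, backward along $a$ and correspondingly forward along $b$. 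It is convenient to declare $\pi_0,\pi_n$ to be the degenerate endpoints so that $a_0=a$ uses all of $a$ and none of $b$, and $a_n=b$ vice versa. The interior vertices of $P(a,b)$ are then exactly the loops $(\infty\,\pi)_b\cup(\infty\,\pi)_a$ over all intersection points $\pi$ of $a$ and $b$ for which this loop is simple, listed in the order in which the $\pi$ occur backward along $a$.

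Now fix $0\le i<j\le n$. The first step is to identify $a_i\cap a_j$. Since $\pi_i$ precedes $\pi_j$ along $b$ and $\pi_j$ precedes $\pi_i$ along $a$, the loops $a_i$ and $a_j$ literally share the initial $b$-arc up to $\pi_i$ and the initial $a$-arc up to $\pi_j$, so every transverse intersection of $a_i$ and $a_j$ is an intersection point $\pi$ of $a$ and $b$ lying strictly between $\pi_i$ and $\pi_j$ along $b$, equivalently between $\pi_j$ and $\pi_i$ along $a$. The crucial computation is that for such a $\pi$ the unicorn arc of the pair $(a_i,a_j)$ at $\pi$ is the \emph{same} loop as the unicorn arc of $(a,b)$ at $\pi$: following $a_j$ from $\infty$ to $\pi$ traces the initial $b$-arc to $\pi$, and following $a_i$ to $\pi$ along the strand on which it is an arc of $a$ (so that the shared $b$-arc is not retraced) traces the initial $a$-arc to $\pi$, so both constructions produce $(\infty\,\pi)_b\cup(\infty\,\pi)_a$, and in particular one is simple iff the other is. Combined with the monotonicity of the $\pi_\ell$, this shows the intersection points of $a$ and $b$ between $\pi_j$ and $\pi_i$ along $a$ that yield simple unicorn arcs are exactly $\pi_{i+1},\ldots,\pi_{j-1}$, with loops $a_{i+1},\ldots,a_{j-1}$. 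Hence the vertex set of $P(a_i,a_j)$ is exactly $\{a_i,\ldots,a_j\}$; these are linearly ordered by position of the defining point along $a_i$, and since consecutive vertices of a unicorn path must be disjoint whereas, when $j\ge i+3$, $a_i$ is not disjoint from $a_{j-1}$ (they share the essential intersection $\pi_{i+1}$), the only compatible ordering is $(a_i,a_{i+1},\ldots,a_j)$, the desired subpath.

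What remains — and what I expect to be the real obstacle — is to justify that the candidate intermediate points $\pi_{i+1},\ldots,\pi_{j-1}$ are genuine (essential) intersections of $a_i$ and $a_j$, and to see that this can fail only when $j=i+2$. Here I would use that $a_i$ and $a_j$ have geodesic representatives, so any bigon between them, once one leaves the two ``corners'' $\pi_i,\pi_j$, restricts to a bigon between a sub-arc of $a$ and a sub-arc of $b$; but $a$ and $b$ are in minimal position, so no such bigon exists. When $j\ge i+3$ there are at least two candidate points and this corner argument forces them to be essential, so $a_i$ and $a_j$ genuinely intersect and the previous paragraph applies. When $j=i+2$ there is the single candidate $\pi_{i+1}$, which may be an inessential intersection removed by a bigon through a corner; in that case $a_i$ and $a_{i+2}$ are disjoint, i.e.\ adjacent in the loop graph, and $P(a_i,a_{i+2})=(a_i,a_{i+2})$ is the stated exception, while otherwise $P(a_i,a_{i+2})=(a_i,a_{i+1},a_{i+2})$ is a subpath; the case $j=i+1$ is immediate since $a_i,a_{i+1}$ are adjacent by construction. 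A secondary technical point to be handled consistently throughout is the bookkeeping of the sub-arcs of $a$ and $b$ that emanate together from $\infty$: these shared strands must be discounted in the same way in both unicorn constructions whenever one refers to ``the first intersection along $b$''.
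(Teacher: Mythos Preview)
The paper does not prove this lemma; it is quoted from \cite{Juliette} (where it is in turn the loop-graph adaptation of Lemma~3.3 of \cite{Hensel-Przytycki-Webb}), so there is no proof in the present paper to compare your attempt against.

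That said, your outline is the standard argument and is essentially correct. The central observation---that for an intersection point $\pi$ of $a$ and $b$ lying strictly between $\pi_i$ and $\pi_j$, the unicorn loop of the pair $(a_i,a_j)$ at $\pi$ coincides with the unicorn loop $(\infty\pi)_b\cup(\infty\pi)_a$ of $(a,b)$ at $\pi$---is exactly the heart of the Hensel--Przytycki--Webb proof, and your identification of the exceptional case $j=i+2$ is the right one.

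On the two technical points you flag. The orientation bookkeeping is handled by orienting $a_i$ so that one reads it starting along its $a$-arc; then the order of intersection points along $a_i$ is inherited from the order along $a$, and Definition~\ref{def:unicorn_1} applies verbatim. The essentiality question can be largely sidestepped: work with the literal concatenated representatives $a_i=(\infty\pi_i)_b\cup(\infty\pi_i)_a$ and $a_j=(\infty\pi_j)_b\cup(\infty\pi_j)_a$ and apply Definition~\ref{def:unicorn_1}, which simply lists all intersection points and keeps the simple resulting loops. Every transverse intersection of these representatives is an intersection of $a$ and $b$ (which \emph{are} in minimal position), and the resulting loop is the same in both computations, so whether that particular crossing of $a_i$ and $a_j$ is removable is irrelevant to the output list. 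The only genuine degeneracy is when the concatenated representatives have no transverse crossing at all; this forces $j\le i+2$ and yields precisely the stated exception.
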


\begin{lemma}[Proposition $3.5$ and Corollary $3.6$ of \cite{Juliette}]\label{lemma:unicorn_paths_close_to_geo}
If $g$ is a geodesic of the loop graph between $a$ and $b$, then $P(a,b)$ is included the $6$-neighborhood of $g$, and the Hausdorff distance between $g$ and $P(a,b)$ is at most $13$.
\end{lemma}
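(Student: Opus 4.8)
The plan is to adapt the ``$1$-slim triangle'' method of \cite{Hensel-Przytycki-Webb} to the loop graph, following \cite{Juliette}. The crux --- and the main obstacle --- is a $1$-slimness statement for unicorn triangles: for any loops $a$, $b$, $c$ and any loop $d$ on the unicorn path $P(a,b)$, the distance in the loop graph from $d$ to $P(a,c)\cup P(c,b)$ is at most $1$. Granting this, the rest follows from general principles.

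To prove $1$-slimness, I would use the description from Definition~\ref{def:unicorn_2}: a vertex $d$ of $P(a,b)$ is the concatenation $\alpha\cup\beta$ of an initial sub-arc $\alpha$ of $a$ and an initial sub-arc $\beta$ of $b$, meeting at $\infty$ and at one further common point $p$. Put $c$ in minimal position and follow it from $\infty$, splitting into cases according to whether $c$ first meets $\alpha$, first meets $\beta$, or misses one of them entirely. In each case the relevant first intersection point is used to surger $c$ against $a$ (respectively against $b$), producing a loop that is a vertex of $P(a,c)$ (respectively of $P(c,b)$) and that can be realized disjointly from $d$, hence is at distance $\le 1$ from it; the degenerate sub-cases (the surgered loop coincides with $d$, or $c$ already misses one of $\alpha,\beta$) are checked directly. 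I expect this step to be the delicate one: intermediate unicorn loops and the surgered loops need not be simple, one must keep track of orientations (the unicorn path is built from \emph{oriented} loops), and in each case one must verify that the surgered loop is genuinely a vertex of the relevant unicorn path --- it is to make such identifications robust that the two definitions were shown equivalent in Lemma~\ref{lemma:unicorn_defs_equiv}.

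With $1$-slimness in hand, I would invoke the general combinatorial lemma of \cite{Hensel-Przytycki-Webb} (reproved in \cite{Juliette}): in any graph, a system of connected paths between all pairs of vertices that is $1$-slim and closed under passing to subpaths --- which, by Lemma~\ref{lemma:subpaths_of_unicorn_paths}, unicorn paths are --- lies within the $6$-neighborhood of every geodesic with the same endpoints. This gives the first assertion. For the Hausdorff bound, write $P(a,b)=(a=a_0,\dots,a_N=b)$ and $g=(a=g_0,\dots,g_L=b)$: consecutive vertices of $P(a,b)$ are disjoint, hence at distance $\le 1$, and each $a_i$ lies within $6$ of some $g_{\varphi(i)}$; since $\varphi(0)=0$, $\varphi(N)=L$, and $|\varphi(i+1)-\varphi(i)|\le 13$, the indices $\varphi(i)$ leave gaps of size at most $13$ along $\{0,\dots,L\}$, so a short estimate bounds the distance from every $g_j$ to $P(a,b)$ by $13$. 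Combined with the $6$-neighborhood bound, this gives Hausdorff distance at most $13$.
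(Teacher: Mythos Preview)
The paper does not actually supply a proof of this lemma: it is quoted verbatim as Proposition~3.5 and Corollary~3.6 of \cite{Juliette}, with no argument given here. Your sketch is precisely the argument used in \cite{Juliette} (itself transplanted from \cite{Hensel-Przytycki-Webb}): establish $1$-slimness of unicorn triangles by the case analysis on how $c$ meets the $a$-side and $b$-side of a unicorn arc, feed this together with the subpath property (Lemma~\ref{lemma:subpaths_of_unicorn_paths}) into the general ``slim paths are $6$-close to geodesics'' lemma, and then run the index-tracking estimate along the geodesic for the reverse inclusion. So your approach and the cited one coincide.

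One minor remark on the final numerical step: from $|\varphi(i+1)-\varphi(i)|\le 13$ and $\varphi(0)=0$, $\varphi(N)=L$, an intermediate-value argument gives for each $j$ an index $i$ with $\min(|j-\varphi(i)|,|j-\varphi(i+1)|)\le 6$, hence $d(g_j,P(a,b))\le 12$; the constant $13$ in \cite{Juliette} is simply a slightly coarser bookkeeping of the same estimate, so your outline is consistent with (indeed marginally sharper than) the stated bound.
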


An immediate consequence of the second lemma is that unicorn paths are close to quasi-geodesics.

\begin{lemma}\label{lemma:unicorn_paths_close_to_quasi}
For any $\kappa$, $\epsilon$, there exists $C \in \N$ such that if $(x_n)$ is a $(\kappa, \epsilon)$-quasi-geodesic in the loop graph, then for every $n,k$ with $n>k$, the unicorn path $P(x_0, x_k)$ is contained in the $C$-neighborhood of the unicorn path $P(x_0, x_n)$.
\end{lemma}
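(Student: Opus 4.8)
The plan is to reduce the statement to Lemma~\ref{lemma:unicorn_paths_close_to_geo} together with the standard Morse (stability) lemma for quasi-geodesics in $\delta$-hyperbolic spaces. Since the loop graph is Gromov-hyperbolic (by~\cite{Juliette}), with hyperbolicity constant $\delta$ say, there is a Morse constant $M = M(\kappa,\epsilon,\delta)$ so that any $(\kappa,\epsilon)$-quasi-geodesic lies in the $M$-neighborhood of any geodesic with the same endpoints, and vice versa. First I would fix a geodesic $g_n$ from $x_0$ to $x_n$ and a geodesic $g_k$ from $x_0$ to $x_k$. Because $(x_i)$ is a $(\kappa,\epsilon)$-quasi-geodesic, the subsegment $x_0,\dots,x_k$ is also a $(\kappa,\epsilon)$-quasi-geodesic from $x_0$ to $x_k$, so $g_k$ and $g_n$ both fellow-travel the quasi-geodesic $(x_i)$ up to index $k$; concretely, $g_k$ is contained in the $2M$-neighborhood of the initial segment of $g_n$ (and conversely), by comparing each to the common quasi-geodesic subpath and using that $d(x_k, \text{endpoint of this subpath on } g_n)$ is bounded in terms of $\kappa,\epsilon,M$.

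Next I would bring in the unicorn paths. By Lemma~\ref{lemma:unicorn_paths_close_to_geo}, $P(x_0,x_k)$ lies in the $6$-neighborhood of $g_k$, and $P(x_0,x_n)$ lies in the $6$-neighborhood of $g_n$; moreover the Hausdorff distance between $g_n$ and $P(x_0,x_n)$ is at most $13$. Combining: every point of $P(x_0,x_k)$ is within $6$ of $g_k$, which is within $2M$ of $g_n$, which is within $13$ of $P(x_0,x_n)$. Hence $P(x_0,x_k)$ is contained in the $(6+2M+13)$-neighborhood of $P(x_0,x_n)$, and we may take $C = 19 + 2M(\kappa,\epsilon,\delta)$, which depends only on $\kappa$ and $\epsilon$ (as $\delta$ is fixed). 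This is the whole argument; the reduction is essentially bookkeeping of neighborhood constants once the Morse lemma is invoked.

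The one genuine point to be careful about is the claim that $g_k$ lies in a bounded neighborhood of the \emph{initial portion} of $g_n$ — that is, that the geodesic $g_k$ does not wander far from $g_n$ even though it has a different (nearer) endpoint. The cleanest way to see this: both $g_k$ and the initial segment of $g_n$ up to a point closest to $x_k$ are geodesics (or coarse geodesics) with endpoints $x_0$ and a point at bounded distance from $x_k$, so by thinness of geodesic bigons (a consequence of $\delta$-hyperbolicity) they $O(\delta+M)$-fellow-travel. I expect this fellow-traveling step to be the main obstacle, in the sense that it is the only place where one must actually use hyperbolicity rather than merely quote Lemma~\ref{lemma:unicorn_paths_close_to_geo}; everything else is concatenation of the explicit constants $6$ and $13$ from that lemma. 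Alternatively, one can avoid geodesic bigons entirely by noting that $P(x_0, x_k)$ itself is a subpath-like object: by Lemma~\ref{lemma:subpaths_of_unicorn_paths}, unicorn paths are ``almost'' nested, but since $x_k$ is not in general a vertex of $P(x_0,x_n)$ this does not apply directly, so I would rather run the Morse-lemma argument above.
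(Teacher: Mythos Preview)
Your argument is correct and follows the same route as the paper's proof: invoke the Morse lemma to compare the quasi-geodesic to geodesics, then use Lemma~\ref{lemma:unicorn_paths_close_to_geo} to pass between geodesics and unicorn paths; the paper simply writes ``putting these together yields the lemma'' where you have spelled out the chain $P(x_0,x_k)\to g_k\to g_n\to P(x_0,x_n)$ with explicit constants. One small remark: your final paragraph worries about showing $g_k$ lies near the \emph{initial portion} of $g_n$, but this is not needed---the statement only asks that $P(x_0,x_k)$ lie in a neighborhood of $P(x_0,x_n)$, so it suffices that $g_k$ lie in the $2M$-neighborhood of $g_n$ anywhere, which follows immediately from $g_k\subset N_M(\{x_0,\dots,x_k\})\subset N_M(\{x_0,\dots,x_n\})\subset N_{2M}(g_n)$ without any bigon argument.
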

\begin{proof}
By Lemma~\ref{lemma:unicorn_paths_close_to_geo}, unicorn paths are uniformly close to geodesics, and for any $\kappa$, $\epsilon$ there is a constant $C'$ such that $(\kappa,\epsilon)$-quasi-geodesics are $C'$-close to geodesics.  This constant is called the $(\kappa, \epsilon)$-Morse constant of the loop graph; see Section~\ref{section:Gromov_boundaries}.  Putting these together yields the lemma.
\end{proof}

One way to think about the unicorn path $P(a,b)$ is that it slowly begins more and more like $b$.  We can make this precise with the following lemma.  Recall Definition~\ref{def:k_begin}, which defines the notion of $k$-beginning-like.

\begin{lemma}\label{lemma:uinicorn_path_begin_like_b}
Let $a$ be a loop which intersects the equator $p$ times.  For any loop or ray $b$, enumerate the unicorn path $P(a,b) = (x_n)$.  Then for $i>k\lceil p/2\rceil$, the loop $x_i$ must $k$-begin like $b$.
\end{lemma}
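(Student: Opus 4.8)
The plan is to track how much of $b$ is "locked in" as we move along the unicorn path, using the inductive Definition~\ref{def:unicorn_2}. Recall that each $x_i$ is of the form $(\infty p_i)_b \cup (\infty p_i)_{x_{i-1}}$, where $p_i$ is the \emph{first} intersection of $b$ with $x_{i-1}$ as we travel along $b$ from $\infty$. The key observation is that the initial segment $(\infty p_i)_b$ of $x_i$ along $b$ is never undone at later stages: once we pass to $x_{i+1}$, the first intersection $p_{i+1}$ of $b$ with $x_i$ occurs at or \emph{after} $p_i$ along $b$ (it cannot occur strictly before $p_i$, since the portion of $x_i$ before $p_i$ \emph{is} a sub-arc of $b$ and $b$ is simple, hence has no self-intersections there except trivially). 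Therefore the points $p_1, p_2, \ldots$ march monotonically forward along $b$, and $x_i$ agrees with $b$ on the initial $b$-segment $(\infty p_i)_b$.

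So the main quantitative step is to bound how many equatorial segments the arc $(\infty p_i)_b$ must cross, from below, in terms of $i$ and $p$. First I would show that at each step the "$x_{i-1}$-part" of $x_i$, namely the arc $(\infty p_i)_{x_{i-1}}$ traversed backward, can cross at most $\lceil p/2 \rceil$ equatorial segments more than... no — rather, the cleanest route: $x_i$ is a simple loop made of a $b$-initial arc and an $x_{i-1}$-terminal arc, and since $x_{i-1}$ in turn is built from a $b$-initial arc and an $x_{i-2}$-terminal arc, and so on down to $x_0 = a$, one sees that $x_i = (\text{initial arc of }b) \cup (\text{some arc that is a concatenation of pieces of } a)$. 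More precisely, the "non-$b$" part of $x_i$ is always a sub-arc of the original loop $a$ (this is visible in Definition~\ref{def:unicorn_2}: the terminal part of $a_{k+1}$ is a sub-arc of the terminal part of $a_k$). Since $a$ crosses the equator $p$ times total, the non-$b$ part of $x_i$ crosses the equator at most $p$ times, hence crosses at most $\lceil p/2 \rceil$ equatorial segments in each direction — in particular, reading off the sequence of segments that $x_i$ crosses, at most $\lceil p/2\rceil$ of the "distinct segment visits" can come from the $a$-part.

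Now for the counting. Suppose $x_i$ does \emph{not} $k$-begin like $b$. Then the initial $b$-arc $(\infty p_i)_b$ of $x_i$ must fail to cover the first $k$ equatorial segments that $b$ crosses — otherwise $x_i$ would agree with $b$ on those first $k$ segments (the $a$-part comes strictly after the $b$-part along $x_i$ and so cannot interfere with the first $k$ crossings, as long as the $b$-part already exhausts them). Since the $p_j$ are strictly increasing along $b$ (we may assume all $x_j$ distinct, else the path has already terminated at $b$), each step $x_{j-1} \to x_j$ advances $p_{j}$ past at least one intersection point; more to the point, between consecutive stages the initial $b$-arc can gain only a bounded number of new equatorial-segment crossings before a new simple loop is forced. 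The bound is exactly $\lceil p/2 \rceil$: going from $p_{j}$ to $p_{j+1}$ along $b$, the intervening arc of $b$, together with the terminal $a$-part, forms (portions of) the loop, and the $a$-part caps off at most $\lceil p/2\rceil$ segments' worth of progress. Hence after $i > k\lceil p/2\rceil$ steps, the initial $b$-arc of $x_i$ has advanced past the first $k$ equatorial segments of $b$, so $x_i$ $k$-begins like $b$.

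The main obstacle I anticipate is pinning down the precise bookkeeping in that last step — matching the "$\lceil p/2\rceil$ crossings can be spoiled per step" claim with the definition of $k$-beginning carefully enough, in particular handling the orientation (we cross segments in a recorded direction) and the possibility that the $b$-arc and the $a$-arc of $x_i$ both dip in and out of early segments. The clean way to handle this is to argue directly about the combinatorial word of equatorial segments: the word of $x_i$ is (word of $(\infty p_i)_b$) followed by (a subword of the reversed word of $a$), so its first $\ell$ letters agree with those of $b$ as soon as $(\infty p_i)_b$ has length $\geq \ell$ as a segment-word; then one only needs that $i$ steps force $(\infty p_i)_b$ to have segment-length at least $i/\lceil p/2\rceil$, which follows because each step's new $b$-progress is capped by the $a$-budget $\lceil p/2\rceil$. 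I would present this as the body of the proof, with Figure-level intuition suppressed.
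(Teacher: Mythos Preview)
Your structural setup is correct: each unicorn loop $x_i$ is indeed a $b$-prefix $(\infty p_i)_b$ followed by a sub-arc of $a$, and the switching points $p_i$ do march strictly forward along $b$. The goal is also correctly identified: show that after $k\lceil p/2\rceil$ steps the $b$-prefix has already made $k$ equatorial crossings.

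The gap is in the quantitative step, and it is not just loose bookkeeping. Your argument runs in the wrong direction: you assert that ``each step's new $b$-progress is capped by the $a$-budget $\lceil p/2\rceil$,'' but a cap on progress per step gives an \emph{upper} bound on how far the $b$-prefix has advanced, not the lower bound you need. What must actually be shown is that \emph{at most} $\lceil p/2\rceil$ unicorn steps can occur before the $b$-prefix gains one more equator crossing. Nothing in your write-up establishes this; the observation that the $a$-part of $x_i$ has at most $p$ crossings total does not by itself bound how many intersections of $b$ with $a$ can occur between two consecutive equator crossings of $b$.

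The paper's proof supplies exactly this missing fact. The equator and $\infty$ divide the sphere into two hemispheres, and since all punctures lie on the equator, each hemisphere is an honest disk. The loop $a$ has at most $\lceil p/2\rceil$ arcs in either hemisphere. Because $a$ and $b$ are in minimal position and the hemisphere is simply connected, each arc of $a$ and each arc of $b$ in a given hemisphere intersect at most once (otherwise they would bound a bigon). Hence between any two consecutive equator crossings of $b$, there are at most $\lceil p/2\rceil$ intersections with $a$. Combined with your (correct) observation that each step advances $p_i$ past at least one $a\cap b$ point along $b$, this gives: after $i > k\lceil p/2\rceil$ steps, $p_i$ lies beyond the $k$-th equator crossing of $b$, so $x_i$ $k$-begins like $b$. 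The ``minimal position in a disk forces at most one intersection per arc pair'' is the key idea you are missing.
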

\begin{proof}
Divide $a=x_0$ into segments corresponding to which hemisphere of the sphere they are in.  Since $a$ intersects the equator $p$ times, there are at most $\lceil p/2\rceil$ in either hemisphere.  In particular, $b$ can intersect $a$ at most $\lceil p/2\rceil$ times before crossing the equator.  By the definition of unicorn paths, this implies that after at most $k\lceil p/2\rceil$ terms in the unicorn path, elements of the unicorn path will align with $b$ for at least $k$ equator crossings; that is, they will $k$-begin like $b$.
\end{proof}

Note that it is certainly possible that the unicorn path does not \emph{have} $\lceil p/2\rceil$ terms in it, in which case this lemma is vacuous.  The lemma just asserts that if the unicorn path is long enough compared to the number of intersections of $a$ with the equator, then elements of the unicorn path must start beginning like $b$.

\subsection{Infinite unicorn paths}

We now adapt the idea of unicorn paths to build unicorn paths between a loop and a long ray.  This idea is similar to the infinite unicorn paths defined by Witsarut Pho-On in~\cite{Pho-On}, Section~3.1.

\begin{definition}
Let $a$ be an oriented loop and $l$ a long ray.  Recall we have chosen geodesic class representatives for $a$ and $l$, so in particular they are in minimal position.  We think of $l$ as oriented away from $\infty$, and we define the unicorn path $P(a,l)$ as follows (see figure \ref{figu:infinite_unicorn_path}):
\begin{enumerate}
\item Set $a_0 = a$.
\item Let $x_{k+1}$ be the first intersection between $l$ and $a_{k}$ as we follow $l$ from $\infty$.  We define $a_{k+1} = (\infty x_{k+1})_l \cup (\infty x_{k+1})_{a_k}$.
\end{enumerate}
Note as with finite unicorn paths, $a_k$ and $a_{k+1}$ are clearly disjoint.
The unicorn path $P(a,l)$ is the (possibly infinite) sequence $P(a,l) = (a, a_1, \ldots )$.
\end{definition}

\begin{figure}[htb]
\labellist
\pinlabel $a$ at 56 97
\pinlabel $l$ at 83 73
\pinlabel $a_0$ at 257 195
\pinlabel $a_1$ at 257 155
\pinlabel $a_2$ at 257 98
\pinlabel $a_3$ at 257 35
\endlabellist
\centering
\includegraphics[scale=1.1]{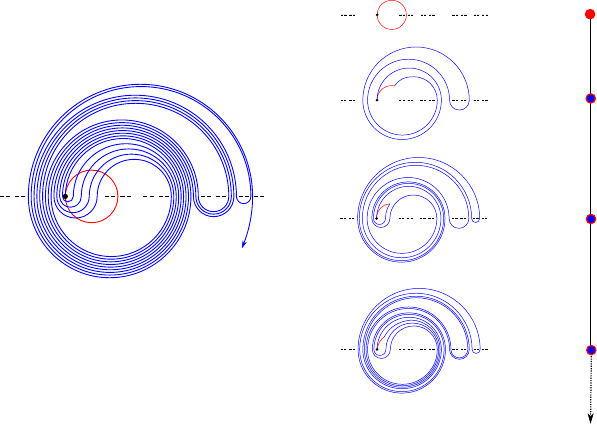}
\caption{Example of the beginning of an infinite unicorn path $P(a,l)$}
\label{figu:infinite_unicorn_path}
\end{figure}

Since we always have $a_{k+1}$ disjoint from $a_k$, $P(a,l)$ is a path in the loop graph, which we can think of as a path in $\RG$ if necessary.  In this definition, we refer to the ``first'' intersection of $a_k$ and $l$.  To address possible confusion, we note that the set of intersection points $a_k \cap l$ is discrete as a subset of $l$, although it might not be discrete as a subset of $a_k$.  That is, $l$ is a long ray, so it is free to accumulate in complicated ways in the loop $a_k$, but $a$ is a loop, so were it to accumulate anywhere in $l$, there would necessarily be points in the Cantor set arbitrary close to $a_k$ and thus \emph{on} $a_k$, which is a contradiction.

We also remark that we can always remember that $a_k$ is made up of one segment on $l$ and one segment on $a$.  As a consequence, if $b$ is disjoint from $a$, then $b$ is always disjoint from the segment $(\infty x_k)_{a_k}$, and if $b$ is also disjoint from $l$ then it is disjoint from the entire unicorn path $P(a,l)$.

\begin{remark}
It is important to note that an infinite unicorn path is so named because it is a unicorn path between a loop and an infinite object (a long ray).  It is possible that an infinite unicorn path is actually a finite sequence of loops if the long ray intersects the loop finitely many times.  For our purposes, these cases will be degenerate, so there should generally be no confusion.
\end{remark}

\begin{lemma}\label{lemma:unicorn_paths_different_loops}
Let $l$ be a long ray, and let $a$ and $b$ be two oriented loops at distance~$n$ in the loop graph. Then the unicorn paths $P(a,l)$ and $P(b,l)$ have Hausdorff distance at most $n$.
\end{lemma}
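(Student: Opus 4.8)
The plan is to prove the statement by induction on $n$, reducing to the case of adjacent loops, and then to handle that case by relating the infinite unicorn path $P(b,l)$ to a suitable \emph{finite} unicorn path.

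First I would reduce to $n=1$. If $a=c_0,c_1,\dots ,c_n=b$ is a geodesic in the loop graph, then $c_i$ and $c_{i+1}$ are disjoint for each $i$, and since Hausdorff distance between subsets of the loop graph satisfies the triangle inequality,
\[
d_{\mathrm{Haus}}\bigl(P(a,l),P(b,l)\bigr)\le \sum_{i=0}^{n-1} d_{\mathrm{Haus}}\bigl(P(c_i,l),P(c_{i+1},l)\bigr).
\]
Hence it is enough to show: if $a$ and $b$ are disjoint loops, then every vertex of $P(a,l)$ is at distance at most $1$ from some vertex of $P(b,l)$ (the reverse inclusion being symmetric), which gives $d_{\mathrm{Haus}}\le 1$ in that case and $\le n$ in general.

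Now fix a vertex $a_j$ of $P(a,l)=(a=a_0,a_1,\dots )$. If $j=0$ then $a_j=a$ is disjoint from $b=b_0\in P(b,l)$, so assume $j\ge 1$. Recall that $a_j$ is the union of an initial arc $\lambda$ of $l$ and an arc $\alpha\subseteq a$ running from $\infty$, the two meeting at a point $z\in l\cap a$. Since $a$ and $b$ are disjoint, $b$ is disjoint from $\alpha$, so every intersection point of $b$ with the loop $a_j$ lies on the sub-arc $\lambda$ of $l$. The point of this observation is that the \emph{finite} unicorn path $P(b,a_j)=(b=c_0,c_1,\dots ,c_m=a_j)$ then agrees with $P(b,l)=(b=b_0,b_1,\dots )$ for as long as it runs: I claim $c_i=b_i$ for every $i<m$. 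Indeed, in Definition~\ref{def:unicorn_2} one builds $c_{i+1}$ from $c_i$ using the first intersection of $c_i$ with $a_j$ met as one travels along $a_j$ from $\infty$; since $a_j$ begins with the arc $\lambda$ of $l$, and since inductively $c_i=b_i$ is built from $b$ and sub-arcs of $l$ (so meets $a_j$ only inside $\lambda$), this first intersection is exactly the first intersection of $b_i$ with $l$, and $c_{i+1}$ is formed in the same way as $b_{i+1}$. The path $P(b,a_j)$ reaches $a_j$ precisely when no further such intersection exists inside $\lambda$; at that stage the preceding vertex $c_{m-1}$ equals $b_{m-1}$, a genuine vertex of $P(b,l)$ adjacent to $a_j=c_m$. (If $m=0$, i.e.\ $b\cap a_j=\emptyset$, then $a_j$ is disjoint from $b=b_0$.) In every case $a_j$ lies within distance $1$ of $P(b,l)$; exchanging $a$ and $b$ yields the reverse inclusion and finishes the argument.

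The main obstacle will be making the claim that $P(b,a_j)$ and $P(b,l)$ coincide up to step $m-1$ fully rigorous. This means checking step by step that ``first intersection along $a_j$'' and ``first intersection along $l$'' are the same point, which requires bookkeeping of the shared initial sub-arcs of $l$ carried by the loops $b_i$ and by $a_j$, together with the fact — used at each step — that the arcs-on-$b$ occurring inside the $b_i$ never meet $\alpha$ because $a$ and $b$ are disjoint. One must also note that $P(b,a_j)$ cannot outrun $P(b,l)$, i.e.\ that $m-1$ does not exceed the length of $P(b,l)$: this holds because $P(b,l)$ terminates exactly when the remaining arc of $b$ stops meeting $l$, hence in particular stops meeting $\lambda\subseteq l$, which is also what forces $P(b,a_j)$ to terminate.
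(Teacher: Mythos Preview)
Your argument is correct, and it takes a genuinely different route from the paper's proof.  The paper argues directly: given $a_k=(\infty x_k)_l\cup(\infty x_k)_a\in P(a,l)$, it looks at the first point $y_k$ where $b$ meets the segment $l_k=(\infty x_k)_l$ \emph{as one travels along $b$}, and checks by hand that the loop $b'=(\infty y_k)_l\cup(\infty y_k)_b$ is simple, disjoint from $a_k$, and (via the Definition~\ref{def:unicorn_1} characterisation of unicorn loops) belongs to $P(b,l)$.  Your approach instead routes through the finite unicorn path $P(b,a_j)$ and proves it coincides with an initial segment of $P(b,l)$; the witness you find is the last such common vertex, which is in general a different element of $P(b,l)$ from the paper's $b'$.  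Your method gives a little extra structural information (the alignment of $P(b,a_j)$ with $P(b,l)$), at the cost of a longer inductive verification; the paper's is a two--line construction but leans on the equivalence of the two unicorn definitions.

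One point in your bookkeeping deserves to be made explicit.  When you assert that $c_i=b_i$ ``meets $a_j$ only inside $\lambda$'', you correctly note that the $b$--arc of $b_i$ avoids $\alpha$ because $a\cap b=\emptyset$.  But you also need that the $l$--arc of $b_i$ avoids $\alpha$.  This uses two facts which you should spell out: first, the inductive hypothesis forces $y_i\le x_j$ along $l$, so the $l$--arc of $b_i$ lies inside $\lambda$; second, $a_j$ is \emph{simple}, so $\alpha$ and $\lambda$ meet only at their common endpoint $x_j$, and in particular $\alpha$ has no transverse crossing with $(\infty y_i)_l\subseteq\lambda$.  With those two observations recorded, your induction goes through cleanly, and the termination step (when the next intersection $y_{i+1}$ of $l$ with $b_i$ lies beyond $x_j$, the loops $a_j$ and $b_i$ are genuinely disjoint) follows from the same decomposition.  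You should also note that the orientation on $a_j$ inherited from the construction indeed begins along $\lambda$, so that ``travelling along $a_j$ from $\infty$'' really means travelling along $l$ first; this is implicit in Definition~\ref{def:unicorn_2} but worth stating.
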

\begin{proof}
Assume first that $a$ and $b$ are disjoint and let us prove that every element of $P(a,l)$ is disjoint from some element of $P(b,l)$. The general result for $a$ and $b$ at distance $n \in \N$ follows by considering a path $(a=c_0,c_1,...,c_n=b)$ and apply this first result to each pair of disjoint loops $(c_j,c_{j+1})$.

Consider an element $a_{k}:=(\infty x_k)_l \cup (\infty x_k)_{a}$. Denote by $l_k$ the subsegment $(\infty x_k)_l\subset l$. If $l_k$ is disjoint from $b$, then $a_{k}$ is disjoint from $b$ and we are done. If $l_k$ is not disjoint from $b$, it intersects it finitely many times. Consider the first intersection point $y_k$ between $b$ and $l_k$ when we follow $b$ from $\infty$. The segment $(\infty y_k)_b$ is disjoint from $l_k$ (by definition) and from $a$ (because it is included in $b$). Hence the loop $b':=(\infty y_k)_l \cup (\infty y_k)_b$ is disjoint from $a_k$. Moreover, because $l_k$ intersects $b$ only finitely many times, $b'$ is an element of $P(b,l)$ (by definition of unicorn paths).
\end{proof}

\begin{lemma}\label{lemma:infinite_restrict_to_finite} Infinite unicorn paths restrict to finite unicorn paths. More precisely, if $a_j \in P(a,l)$ with $j\geq3$, then $P(a,a_j) \subset P(a,l)$.
\end{lemma}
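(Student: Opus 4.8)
The plan is to run the finite unicorn-path construction of Definition~\ref{def:unicorn_2} on the pair of loops $a$ and $a_j$ --- both are loops meeting in finitely many points, so this is a genuine finite unicorn path --- and to verify by induction that its $i$-th term equals $a_i$, the $i$-th term of $P(a,l)$. This will give $P(a,a_j)=(a_0,a_1,\dots,a_j)$, an initial subpath of $P(a,l)$, and in particular $P(a,a_j)\subset P(a,l)$.

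First I would record a preliminary fact. Write $P(a,l)=(a=a_0,a_1,a_2,\dots)$ with $a_{k+1}=(\infty x_{k+1})_l\cup(\infty x_{k+1})_{a_k}$, and recall from the remark after the definition of the infinite unicorn path that each $a_k$ is the concatenation of an initial sub-arc of $l$ from $\infty$ with a sub-arc of $a$. I claim the surgery points occur in strictly increasing order along $l$ from $\infty$, i.e.\ $x_1<x_2<\cdots$: the loop $a_k$ contains the initial sub-arc $(\infty x_k)_l$ of $l$, and since $l$ is simple the first intersection $x_{k+1}$ of $l$ with $a_k$ cannot lie on this shared sub-arc, hence occurs strictly beyond $x_k$ along $l$ (it lies on the $a_{k-1}$-arc of $a_k$). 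In particular, for every $1\le i\le j$ the point $x_i$ lies on the initial $l$-arc $(\infty x_j)_l$ of $a_j$.

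Next I would carry out the induction. Let $(b_0=a,b_1,\dots)$ be the terms and $y_1,y_2,\dots$ the surgery points of $P(a,a_j)$, with $a_j$ in the role of the second loop, and show $b_i=a_i$ for $0\le i\le j$. The base case is $b_0=a=a_0$. Assuming $b_i=a_i$ with $i\le j-1$, the point $y_{i+1}$ is the first point of $a_j\cap b_i=a_j\cap a_i$ met while travelling along $a_j$ from $\infty$; since one first traverses $(\infty x_j)_l$ and $l$ has no (transverse) intersection with $a_i$ before $x_{i+1}$ (which is by definition the first one), while $x_{i+1}\in(\infty x_j)_l\subset a_j$ and $x_{i+1}\in a_i$, one gets $y_{i+1}=x_{i+1}$. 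Then, using $(\infty x_{i+1})_{a_j}=(\infty x_{i+1})_l$ (as $x_{i+1}$ lies on the initial $l$-arc of $a_j$) and $b_i=a_i$,
\[
b_{i+1}=(\infty x_{i+1})_{a_j}\cup(\infty x_{i+1})_{b_i}=(\infty x_{i+1})_l\cup(\infty x_{i+1})_{a_i}=a_{i+1},
\]
completing the induction. Since $a_{j-1}$ and $a_j$ are disjoint, once $b_{j-1}=a_{j-1}$ the construction halts with $b_j=a_j$, so $P(a,a_j)=(a_0,\dots,a_j)$ is an initial subpath of $P(a,l)$.

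The step I expect to be the main obstacle is the bookkeeping of representatives: the surgered loop $a_j=(\infty x_j)_l\cup(\infty x_j)_{a_{j-1}}$ is not in minimal position with $l$ or with the earlier $a_i$, so --- just as is implicit in Lemma~\ref{lemma:unicorn_defs_equiv} and standard for unicorn-path arguments --- one has to check that passing to the geodesic representative of $a_j$ does not alter the combinatorial surgery data (the ``first intersection along $a_j$'') used above; equivalently, that the bigons removed in minimising position do not destroy the intersection points $x_1,\dots,x_j$ that the argument relies on. Once this is granted, the induction above applies verbatim.
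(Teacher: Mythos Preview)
Your approach is essentially the paper's, carried out in more detail: both arguments rest on the observation that $a_j$ contains the initial $l$-segment $(\infty x_j)_l$ carrying the first $j$ surgery points, so running Definition~\ref{def:unicorn_2} on $(a,a_j)$ reproduces $a_0,\dots,a_j$. The paper simply asserts this ``follows from the construction'' where you spell out the induction.

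The one point worth tightening is exactly the obstacle you flag at the end, and it is also where the hypothesis $j\ge 3$ --- which your induction never uses --- actually enters. Your argument runs Definition~\ref{def:unicorn_2} on the \emph{surgered} representative of $a_j$; the genuine $P(a,a_j)$ uses the geodesic representative, and the two can differ precisely when some $x_i$ sits in a bigon between $a$ and $a_j$. The paper disposes of this not by checking bigons directly but by invoking Lemma~\ref{lemma:subpaths_of_unicorn_paths}: in any finite unicorn path the sub-unicorn-path $P(a_i,a_j)$ agrees with the segment $(a_i,\dots,a_j)$ \emph{except} possibly when $j=i+2$. Since here $i=0$, the restriction $j\ge 3$ rules out that exception, and your induction then goes through with the minimal-position representative as well. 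So rather than leaving the bigon check as an unresolved obstacle, you can close it by citing Lemma~\ref{lemma:subpaths_of_unicorn_paths} and observing that your $j\ge 3$ avoids its special case.
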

\begin{proof}
This follows from the construction of $P(a,l)$ because $a_j$ contains the initial segment of $l$ which generates the first $j$ intersections with $a$.  The restriction $j \geq 3$ follows from the special case of $j=i+2$ in Lemma~\ref{lemma:subpaths_of_unicorn_paths}.
\end{proof}

\subsection{Infinite unicorn paths and cover-convergence}

\begin{lemma}\label{lemma:unicorns_k_begins} Let $(x_n)$ be a sequence of oriented loops which has a subsequence which cover-converges to a loop-filling ray $l$. Then for every $k \in \N$, there exists $n_k$ such that the unicorn paths $P(x_0,l)$ and $P(x_0,x_{n_k})$ have the same $k$-first terms.
\end{lemma}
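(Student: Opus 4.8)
The plan is to show that only a \emph{compact} initial sub-arc of $l$ matters for the first $k$ terms of the infinite unicorn path $P(x_0,l)$, and then to use cover-convergence to produce a loop $x_{n_k}$ whose own initial sub-arc agrees with that of $l$ far enough out that the two unicorn constructions are indistinguishable for $k$ steps.

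First I would analyze the construction of $P(x_0,l)=(x_0=a_0,a_1,a_2,\dots)$. By definition $a_{m+1}=(\infty x_{m+1})_l\cup(\infty x_{m+1})_{a_m}$, where $x_{m+1}$ is the first intersection of $l$ with $a_m$ as we follow $l$ from $\infty$. Since $l$ is simple and the sub-arc $(\infty x_m)_l$ already lies inside $a_m$, the curve $l$ can meet $a_m$ only along the portion $(\infty x_m)_{a_{m-1}}\subseteq a_{m-1}$; iterating, every intersection of $l$ with $a_m$ is in particular an intersection of $l$ with $x_0=a_0$, and hence occurs at or after $x_m$ along $l$. Thus the sub-arcs $l_m:=(\infty x_m)_l$ are nested increasing, $l_1\subseteq l_2\subseteq\cdots$, and the loops $a_0,\dots,a_k$ are determined by $x_0$ together with the single compact arc $l_k$. (If $l$ meets $x_0$ only finitely often, $P(x_0,l)$ is a finite path of fewer than $k$ terms and the statement should be read as an equality of all its terms, handled identically; the case of interest is when the path is genuinely infinite.)

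Next, since $l_k$ is compact it crosses only finitely many equatorial segments; let $N$ be one more than that number, so $l_k$ lies in the initial portion of $l$ crossing the first $N$ equatorial segments. By hypothesis some subsequence of $(x_n)$ cover-converges to $l$, so by Lemma~\ref{lemma:cover_convergence_k_begins} there is an index $n_k$ in that subsequence with $x_{n_k}$ $N$-beginning like $l$; that is, $x_{n_k}$ and $l$ cross the same first $N$ equatorial segments in the same directions, so passing to geodesic representatives in the conical cover, their distinguished lifts from $\tilde\infty$ traverse the same first $N$ fundamental domains in the same order, and the initial portion of $x_{n_k}$ through these segments may be taken to coincide with that of $l$, in particular to contain $l_k$. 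Then I would run the constructions of $P(x_0,l)$ and $P(x_0,x_{n_k})$ in parallel and prove by induction on $m\le k$ that they yield the same loop $a_m$: the base case $a_0=x_0$ is immediate, and in the inductive step one travels along the second curve ($l$, resp.\ $x_{n_k}$) to locate its first intersection with $a_m$, which by the nesting of Step~1 lies inside $l_k$ where the two curves agree, so the same point $x_{m+1}$ and the same loop $a_{m+1}$ are produced. Hence the first $k$ terms agree, proving the lemma.

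The step I expect to be the main obstacle is the translation just sketched: making rigorous that ``$N$-beginning like'' forces the relevant initial portions of $x_{n_k}$ and $l$ to coincide in a manner compatible both with the fixed loop $x_0$ and with the ``first intersection along the curve'' data used by the unicorn construction. This is bookkeeping in the conical cover — comparing the distinguished lifts of $x_{n_k}$, $l$, and $x_0$ from $\tilde\infty$ and using that crossing the same lifts of equatorial segments means passing through the same fundamental domains in the same order — but it needs care because unicorn paths are defined via geodesic representatives and transverse intersections rather than purely combinatorially.
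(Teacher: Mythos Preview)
Your proposal is correct and is precisely the argument the paper has in mind: the paper's proof is the single sentence ``This lemma follows from Lemma~\ref{lemma:cover_convergence_k_begins} and the definition of unicorn paths,'' and you have carefully unpacked exactly this, including the key observation that the nested compact initial arcs $l_1\subset l_2\subset\cdots\subset l_k$ determine the first $k$ terms and cross only finitely many equatorial segments. The bookkeeping you flag as the main obstacle is genuine but, as you indicate, is handled by passing to the conical cover and noting that $N$-beginning alike forces the distinguished lifts to traverse the same fundamental domains, so that the intersection combinatorics with (lifts of) $x_0$ agree on the relevant initial portion.
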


\begin{proof}
Because $l$ is loop-filling, $P(x_0,l)$ is infinite (by definition of unicorn path). The result follows then from Lemma \ref{lemma:cover_converge_iff_k_begin} and the definition of unicorn paths.
\end{proof}

\subsection{Infinite unicorn paths and filling rays}

In this section, we relate whether or not a ray is filling to the behavior of the path $P(a,l)$.  It will turn out that the long rays whose unicorn paths give points on the boundary of $\RG$ are exactly the high-filling rays.  The definition of high-filling is in terms of rays, while the definition of unicorn paths is in terms of loops.  Therefore, we must do some translating.

Note that all distances in this section are computed in the loop graph.  This is for simplicity.  However, by Theorem~\ref{theorem:completed_components}, all statements apply equally well to the loop, ray, or short-ray-and-loop graphs, possibly with a change of constants.

\subsubsection{Bounded infinite unicorn paths}

\begin{lemma}\label{lemma:unicorn_path_non_loop_filling}
Let $l$ be a long ray which is not loop-filling. For every oriented loop~$a$, $P(a,l)$ is bounded.
\end{lemma}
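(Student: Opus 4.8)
The plan is to produce a single fixed vertex of the loop graph that stays uniformly close to every term of $P(a,l)$. Since $l$ is not loop-filling, I would fix a loop $L$ whose geodesic representative is disjoint from that of $l$, and set $m = |a\cap L|$, the number of intersection points of the geodesic representatives of $a$ and $L$ (finite, since $a$ and $L$ are fixed vertices). The goal then becomes: bound $d(a_k, L)$ by a constant depending only on $m$, for every term $a_k$ of $P(a,l) = (a = a_0, a_1, a_2,\dots)$. This immediately gives $\mathrm{diam}\, P(a,l) \le 2\max_k d(a_k,L)$, hence $P(a,l)$ is bounded. (If $l$ meets $a$ only finitely often the unicorn path is a finite sequence and there is nothing to prove, so assume it is infinite.)

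\textbf{Key steps.} The first step is the structural observation, which I would take from the remark following the definition of infinite unicorn paths, that each $a_k$ is represented by a curve lying inside $l\cup a$ as a point set: $a_0 = a$ works, and inductively $a_{k+1} = (\infty x_{k+1})_l \cup (\infty x_{k+1})_{a_k}$ is a sub-arc of $l$ glued to a sub-arc of $a_k\subseteq l\cup a$. Since $L$ (geodesic) is disjoint from $l$ (geodesic), this representative of $a_k$ satisfies $a_k \cap L \subseteq a\cap L$, and therefore the geometric intersection number of $a_k$ and $L$ is at most $|a_k\cap L|\le |a\cap L| = m$ for \emph{every} $k$. The second step is the standard bounded-intersection-to-bounded-distance estimate in the loop graph: two loops with geometric intersection number at most $m$ are at distance at most $m+1$, which one sees directly from the finite unicorn path $P_1(a_k,L)$, whose intermediate vertices are indexed by a subset of the at most $m$ points of $a_k\cap L$, so it has length at most $m+1$ and is a path in the loop graph. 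Combining, $d(a_k,L)\le m+1$ for all $k$, and so $d(a_0, a_k)\le d(a,L) + d(L,a_k) \le (m+1) + (m+1)$, independent of $k$; hence $P(a,l)$ is bounded.

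\textbf{Main obstacle.} The only delicate point is the passage from the ``$a_k \subseteq l\cup a$'' structural claim to the intersection bound: one must compute all intersection numbers using geodesic (minimal‑position) representatives of $l$, $a$, and $L$, observe that the concrete concatenation representing $a_k$ uses sub-arcs of the geodesics $l$ and $a$, and use that the geometric intersection number $|a_k \cap L|$ of the \emph{isotopy class} of $a_k$ with $L$ is bounded above by the number of crossings of $L$ with that explicit concatenation. Once this bookkeeping is set up, the rest is a direct application of the unicorn-path length estimate already available in the paper.
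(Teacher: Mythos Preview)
Your proof is correct and takes a genuinely different route from the paper's.

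The paper's argument is a two-line reduction: since $l$ is not loop-filling, pick a loop $L$ disjoint from $l$; then $P(L,l)=\{L\}$ is a single point, and Lemma~\ref{lemma:unicorn_paths_different_loops} (infinite unicorn paths based at loops $a,b$ are $d(a,b)$-Hausdorff-close) immediately gives that $P(a,l)$ lies in the $d(a,L)$-ball about $L$ for any $a$.

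You instead argue directly, without invoking Lemma~\ref{lemma:unicorn_paths_different_loops}: using the structural remark that each $a_k$ is a concatenation of a sub-arc of $l$ and a sub-arc of $a$, you bound the geometric intersection $i(a_k,L)\le |a\cap L|=m$ (since $L$ is disjoint from $l$), and then convert this to a distance bound $d(a_k,L)\le m+1$ via the length of the finite unicorn path $P_1(a_k,L)$. This is entirely valid; your ``main obstacle'' paragraph correctly handles the one subtlety, namely that the concatenation is not the geodesic representative of $a_k$ but still upper-bounds the minimal intersection number. What you gain is a more self-contained argument that does not rely on the Hausdorff-closeness lemma for unicorn paths; what you lose is sharpness of the bound (the paper obtains $d(a_k,L)\le d(a,L)$, which is typically logarithmic in $m$, whereas you get the linear $m+1$), though for the purpose of this lemma only boundedness matters.
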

\begin{proof}
Consider a loop $a$ disjoint from $l$. The only element of $P(a,l)$ is $a$, hence $P(a,l)$ is bounded. According to Lemma \ref{lemma:unicorn_paths_different_loops}, for every oriented loop $b$, $P(b,l)$ is included in the $d(a,b)$-neighborhood of $P(a,l)$, hence $P(b,l)$ is bounded.
\end{proof}

\begin{lemma}\label{lemma:unicorn_path_non_ray_filling}
Let $l$ be a long ray which is not ray-filling. Then there exists a loop $a$ such that $P(a,l)$ is included in the $2$-neighborhood of $a$.  Moreover, for every oriented loop $b$, $P(b,l)$ is bounded.
\end{lemma}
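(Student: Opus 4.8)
The plan is to follow the template of Lemma~\ref{lemma:unicorn_path_non_loop_filling}: first produce a single loop $a$ for which $P(a,l)$ is tightly controlled, and then transfer boundedness to every loop $b$ using Lemma~\ref{lemma:unicorn_paths_different_loops}. Since $l$ is not ray-filling, by definition there is a short ray $x$ disjoint from $l$; let $p\in K$ be its endpoint. Pick $a$ to be any loop disjoint from $x$. The first point is that every vertex $a_k$ of $P(a,l)$ is disjoint from $x$: as recorded just after the definition of infinite unicorn paths, each $a_k$ is the union of an initial segment of $l$ and an initial segment of $a$, and $x$ is disjoint from both $l$ and $a$, hence from $a_k$.

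The heart of the argument is then to show $d(a_k,a)\le 2$ in the loop graph for every $k$. Fix $k$ and use geodesic (hence properly embedded, pairwise minimal position) representatives. Because $a$ and $a_k$ are loops, they are properly embedded in $S$, so their closures in $\Sph^2$ are obtained by adding only the point $\infty$; in particular neither closure contains $p$. Hence there is a small closed disk $D\subset \Sph^2$ centered at $p$ with $D\cap(a\cup a_k)=\emptyset$, $\partial D\cap K=\emptyset$, and $D\cap K$ a nonempty proper clopen sub-Cantor-set of $K$. Now build a loop $c_k$: run two arcs from $\infty$ along opposite sides of $x$, staying inside a regular neighborhood of $x$ thin enough to avoid $a\cup a_k$ (such a neighborhood exists because $a$ and $a_k$ are disjoint from $x$ as closed subsets of $S$), enter $D$, and join the two arcs by an arc in $D$ disjoint from $K$ that separates $D\cap K$ from the rest of $K$. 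The resulting $c_k$ is a simple nontrivial loop disjoint from $x$, and by construction disjoint from both $a$ and $a_k$. Therefore $d(a_k,a)\le 2$, so $P(a,l)$ lies in the $2$-neighborhood of $a$.

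Finally, for an arbitrary oriented loop $b$, Lemma~\ref{lemma:unicorn_paths_different_loops} says $P(b,l)$ has Hausdorff distance at most $d(a,b)$ from $P(a,l)$, so $P(b,l)$ is contained in the $\bigl(d(a,b)+2\bigr)$-neighborhood of $a$; in particular $P(b,l)$ is bounded.

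I expect the only delicate point to be the construction of $c_k$ in the second paragraph — checking that the ``thickening'' of $x$ near its endpoint really yields an honest nontrivial loop disjoint from the prescribed finite family of $x$-disjoint loops, taking proper care of the behavior at $\infty$ and of the sub-Cantor-set being separated. This is exactly the planar cut-and-paste used in \cite{Juliette} to build the quasi-isometry between the ray graph and the loop graph, so I would invoke it as standard rather than reprove it in detail.
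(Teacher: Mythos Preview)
Your proof is correct and rests on the same idea as the paper's: use a short ray $x$ disjoint from $l$, and for each unicorn element $a_k$ exhibit a loop encircling the endpoint of $x$ that is disjoint from both $a$ and $a_k$. The execution differs only in bookkeeping. The paper first fixes a nested sequence $(c_k)$ of loops shrinking around the endpoint of $x$ and then chooses $a$ disjoint from \emph{all} the $c_k$; for a given unicorn element, one simply takes $k$ large enough that $c_k$ misses the compact initial segment of $l$ used. You instead fix $a$ to be any loop disjoint from $x$, observe (via the remark after the definition of infinite unicorn paths) that every $a_k$ is then disjoint from $x$, and build an ad hoc loop $c_k$ in a thin neighborhood of $x$ avoiding the specific pair $a,a_k$. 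Your version is slightly more flexible in the choice of $a$ but requires the neighborhood-of-$x$ construction for each $k$; the paper's version front-loads the construction once. Both are the standard ray-to-loop thickening from \cite{Juliette}, and the ``moreover'' clause is handled identically via Lemma~\ref{lemma:unicorn_paths_different_loops}.
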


\begin{figure}[htb]
\labellist
\pinlabel $r$ at 220 157
\pinlabel $l$ at 50 73
\pinlabel $c_k$'s at 263 140
\pinlabel $a$ at 133 140
\endlabellist
\centering
\includegraphics[scale=0.5]{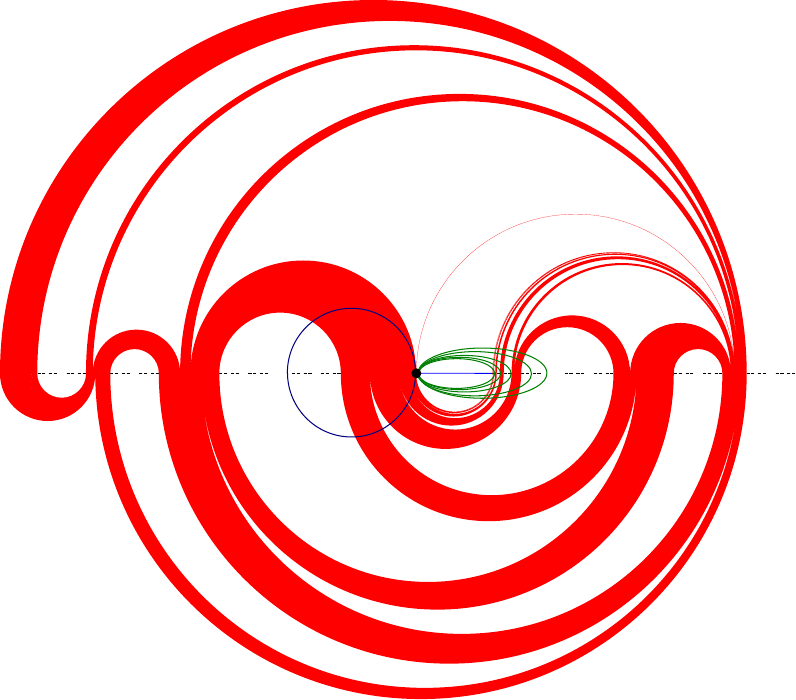}
\caption{Picture for the proof of Lemma \ref{lemma:unicorn_path_non_ray_filling}. Example of a nested sequence of loops.}
\label{figu:unicorn_nested}
\end{figure}

\begin{proof}
Let $r$ be a short ray disjoint from $l$. Consider a nested sequence of mutually disjoint loops $(c_k)$ ``around'' $r$, smaller and smaller (as in Figure \ref{figu:unicorn_nested}).

Let $a$ be a loop disjoint from all of the $c_k$ (this can be obtained by closely looping around a ray disjoint from $r$). By the construction of $(c_k)$, for every $x \in P(a,l)$, there exists $k$ such that $c_k$ is disjoint from $x$. Note that $l$ might be loop-filling, and hence the $c_k$ might not be disjoint from $l$.  However, each $x \in P(a,l)$ contains only a subsegment of $l$, followed by a subsegment of $a$, both of which are disjoint from $c_k$ for sufficiently large $k$.  Hence for all $x \in P(a,l)$ there is a $k$ so that the sequence $a-c_k-x$ is a path in the loop graph and therefore $P(a,l)$ is always within distance $2$ of $a$.

According to Lemma \ref{lemma:unicorn_paths_different_loops}, for every oriented loop $b$, $P(b,l)$ is included in the $d(a,b)$-neighborhood of $P(a,l)$, hence $P(b,l)$ is bounded.
\end{proof}

\begin{lemma}\label{lemma:unicorn_path_1_ray_filling}
Let $l$ be a $2$-filling ray. For every oriented loop $a$, $P(a,l)$ is bounded. Moreover, there exists $a$ such that $P(a,l)$ is included in the $2$-neighborhood of $a$.
\end{lemma}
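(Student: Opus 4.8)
The plan is to run the proof of Lemma~\ref{lemma:unicorn_path_non_ray_filling} again, with the long ray that witnesses $2$-fillingness playing the role of the disjoint short ray used there; the one genuinely new point is to produce a suitable nested family of loops around a \emph{long} ray rather than around a short one. Setting up: since $l$ is $2$-filling there are a short ray $l_0$ and a long ray $l_1$ with $l_0$ disjoint from $l_1$ and $l_1$ disjoint from $l$. Being $2$-filling, $l$ meets every short ray, i.e.\ is ray-filling, hence loop-filling by Lemma~\ref{lemma:loop_filling_ray_filling}. Then $l_1$ cannot be loop-filling: if it were, Lemma~\ref{lemma:disjoint_from_loop_filling} applied to the loop-filling ray $l_1$ and the two rays $l_0$, $l$ disjoint from it would force $l_0$ and $l$ to be disjoint, contradicting that $l$ is ray-filling. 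So there is a loop $a$ disjoint from $l_1$, and this $a$ is the loop in the ``moreover'' clause.

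Fix such an $a$ and choose a nested sequence $(c_k)$ of pairwise disjoint loops, each disjoint from $a$, enclosing smaller and smaller neighbourhoods of $l_1$ (together with the compact set of Cantor points on which $l_1$ accumulates), shrinking onto $l_1$ — the exact analogue of the loops ``around $r$'' in Lemma~\ref{lemma:unicorn_path_non_ray_filling}, now around the properly embedded arc $l_1$. Given any $x\in P(a,l)$, recall $x=(\infty x)_l\cup(\infty x)_a$: the second piece is a subarc of $a$, hence disjoint from every $c_k$; the first piece is an initial arc of $l$ issuing from $\infty$, hence disjoint from $l_1$, and it is a compact arc of $\Sph^2$ meeting neither $K$ nor $\{\infty\}$ except in the limit, so it is uniformly bounded away from $K$ and, near $\infty$, separated from $l_1$ because $l$ and $l_1$ are distinct geodesics going out the cusp. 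Hence, since the $c_k$ shrink onto $l_1$, this initial arc of $l$ is disjoint from $c_k$ once $k$ is large enough, and for such a $k$ the sequence $a-c_k-x$ is a path of length $2$ in the loop graph. Thus every $x\in P(a,l)$ lies in the $2$-neighbourhood of $a$, which is the ``moreover'' statement; the first assertion then follows formally, since by Lemma~\ref{lemma:unicorn_paths_different_loops} every $P(b,l)$ lies in the $d(a,b)$-neighbourhood of $P(a,l)$, hence in the $(d(a,b)+2)$-neighbourhood of $a$, and is therefore bounded.

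The part that needs care is the construction of the loops $c_k$ around the infinite ray $l_1$ and the verification that each \emph{fixed} compact initial arc of $l$ eventually escapes them; this rests on the same geometric input used in Lemma~\ref{lemma:unicorn_path_non_ray_filling} — disjointness of $l$ from the ``core'' object, including the behaviour near the puncture $\infty$ and near the Cantor set — and I would present it at the same informal level as there. Note one cannot replace the $c_k$ by a single loop $c$: the arcs $(\infty x)_l$ exhaust $l$ as $x$ runs over $P(a,l)$, and $l$ may well come arbitrarily close to $l_1$ near its Cantor limit set, so no single loop can be disjoint from all of them.
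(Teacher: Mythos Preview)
Your setup is correct and matches the paper: you correctly deduce that the witnessing long ray $l_1$ (the paper calls it $l'$) is not loop-filling, and you choose $a$ disjoint from $l_1$. The reduction of the first assertion to the ``moreover'' clause via Lemma~\ref{lemma:unicorn_paths_different_loops} is also the same.

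The gap is in your construction of the nested loops $(c_k)$ ``shrinking onto $l_1$''. For a short ray $r$ this is unproblematic: $r$ is a compact arc in $\Sph^2$ with one endpoint in $K$, and thin tubular neighbourhoods give the $c_k$. For a long ray the situation is genuinely harder. First, the closure of $l_1$ in $S$ need not be just $l_1$: it is a geodesic lamination, possibly with other leaves, so your parenthetical ``together with the compact set of Cantor points on which $l_1$ accumulates'' understates what the $c_k$ would have to enclose. Second, even granting the correct target set, producing simple loops based at $\infty$, disjoint from $a$, forming a shrinking neighbourhood basis of this possibly complicated continuum, and separating it from a given compact sub-arc of $l$ near the cusp, is not at ``the same informal level'' as the short-ray picture; it needs a real argument, which you do not give (and you acknowledge as much).

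The paper sidesteps this entirely. Instead of a universal family $(c_k)$, it builds the needed loop separately for each $x_k\in P(a,l)$: writing $l_k=(\infty p_k)_l$, the set $l_k\cup a$ is compact, so its complement in the sphere has finitely many components; the ray $l_1$, being disjoint from both $l$ and $a$, lies entirely in one of them, call it $C$. Since $l_1$ is a geodesic ray contained in $C$, the component $C$ is not simply connected (it contains points of $K$) and has $\infty$ on its boundary, so one can draw a nontrivial loop $y_k\subset C$, automatically disjoint from both $a$ and $x_k$. This uses only that $l_1$ is disjoint from $l$ and $a$ and is geodesic---no control on the closure of $l_1$ is needed. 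Your argument is repaired by replacing the nested family with this direct construction.
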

\begin{proof}
By definition of $2$-filling ray, there exists a ray $l'$ disjoint from $l$ and which is not ray-filling. By Corollary~\ref{corollary:disjoint_from_loop_filling}, $l'$ cannot be loop-filling (this would imply that $l$ is not loop filling, a contradiction).  Let $a$ be a loop disjoint from $l'$. Consider any $x_k=(\infty p_k)_l \cup (\infty p_k)_a$ in $P(a,l)$. We will find a loop $y_k$ disjoint from $a$ and $x_k$: hence $x_k$ is at distance at most two from $a$.  See figure \ref{figu:unicorn_path_1_ray_filling}.

\begin{figure}[htb]
\labellist
\pinlabel $l$ at 50 145
\pinlabel $a$ at 75 130
\pinlabel $l'$ at 52 90
\pinlabel $p_k$ at 120 125

\pinlabel $C$ at 335 100
\pinlabel $l_k$ at 270 145
\pinlabel $a$ at 300 130
\pinlabel $y_k$ at 327 75
\endlabellist
\centering
\includegraphics[scale=0.7]{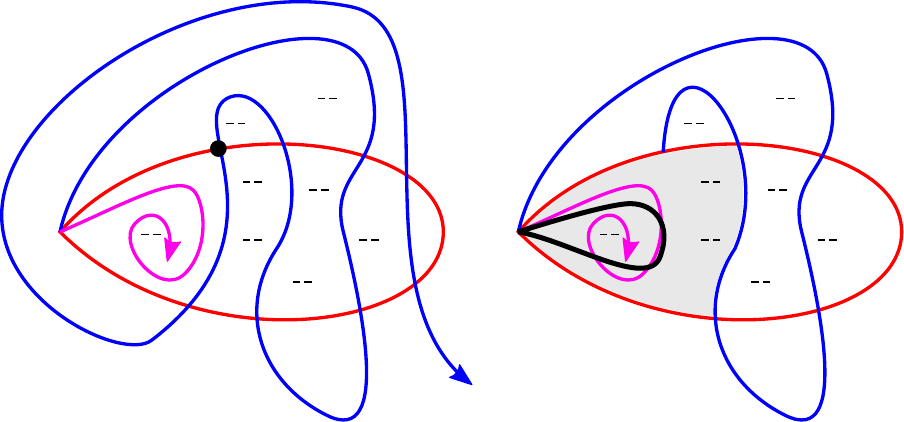}
\caption{The picture for the proof of Lemma~\ref{lemma:unicorn_path_1_ray_filling}. Here $y_k$ is drawn in black on the right; note it does not matter if it intersects $l'$ as long as it is disjoint from $a$ and $l_k$. }
\label{figu:unicorn_path_1_ray_filling}
\end{figure}

Denote by $l_k$ the sub-segment $(\infty p_k)_l$ of $l$ between $\infty$ and $p_k$. Consider $l_k \cup a$: this is a compact set of the sphere, whose complement is a finite union of open sets. Each component of the complement of $l_k \cup a$ is arcwise connected and homeomorphic to a disk or to a disk minus a Cantor set. As $l'$ is disjoint from both $l$ and $a$, it is included in a connected component of the complement of $l_k \cup a$. Denote it by $C$. Note that $C$:
\begin{itemize}
\item is not simply connected, hence contains some points of the Cantor set (because $l'$ is a geodesic included in $C$);
\item contains $\infty$ in its boundary (because it contains the ray $l'$).
\end{itemize}
It follows that we can draw a loop $y_k$ included in $C$: this loop is disjoint from both $a$ and $x_k$. Hence $P(a,l)$ is in the $2$-neighborhood of $a$. 

According to Lemma \ref{lemma:unicorn_paths_different_loops}, for every oriented loop $b$, $P(b,l)$ is included in the $d(a,b)$-neighborhood of $P(a,l)$, hence $P(b,l)$ is bounded.
\end{proof}

\subsubsection{Unbounded infinite unicorn paths}

\begin{lemma}\label{lemma:unbounded_iff_high_filling}
Let $a$ be an oriented loop and let $l$ be a long ray. The three following statements are equivalent:
\begin{enumerate}
\item The ray $l$ is high-filling.
\item The infinite unicorn path $P(a,l)$ is unbounded in $\RG$.
\item The infinite unicorn path $P(a,l)$ goes to infinity in $\RG$; that is,
if $(x_i)_i = P(a,l)$, then $\lim_{i\to\infty} d(a,x_i) = \infty$.
\end{enumerate}
\end{lemma}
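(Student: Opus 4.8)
The plan is to prove the three implications $\neg(1)\Rightarrow\neg(2)$, $(2)\Rightarrow(3)$ and $(1)\Rightarrow(2)$; since a path that goes to infinity is in particular unbounded, $(3)\Rightarrow(2)$ is immediate, and these four implications give the equivalence of the three statements. Throughout, distances are computed in the loop graph, which suffices by Theorem~\ref{theorem:completed_components}. The first implication is just a collation of the boundedness lemmas. If $l$ is not high-filling then either $l$ is not ray-filling, or $l$ is $k$-filling for some $k\in\N$; by Lemma~\ref{lemma:no_k_filling} the second case forces $k\le 2$, and a $1$-filling ray is in particular not ray-filling. So $l$ is either not ray-filling, in which case $P(a,l)$ is bounded for every loop $a$ by Lemma~\ref{lemma:unicorn_path_non_ray_filling} (or by Lemma~\ref{lemma:unicorn_path_non_loop_filling} in the sub-case that $l$ is not even loop-filling), or $2$-filling, in which case $P(a,l)$ is bounded by Lemma~\ref{lemma:unicorn_path_1_ray_filling}. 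In either case $(2)$ fails.

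For $(2)\Rightarrow(3)$, the point is that infinite unicorn paths do not backtrack. Write $P(a,l)=(a=a_0,a_1,a_2,\dots)$ and fix $N$; by $(2)$ there is an index $M\ge 3$ with $d(a_0,a_M)>N+13$. For any $i\ge M$, Lemma~\ref{lemma:infinite_restrict_to_finite} gives that $P(a,a_i)$ is a subpath of $P(a,l)$, hence equals $(a_0,\dots,a_i)$, so $a_M$ is a vertex of $P(a,a_i)$; by Lemma~\ref{lemma:unicorn_paths_close_to_geo} it lies within distance $13$ of some point $p$ on a geodesic $g$ from $a_0$ to $a_i$, and $d(a_0,p)\le d(a_0,a_i)$ because $g$ is geodesic. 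Hence $d(a_0,a_i)\ge d(a_0,a_M)-13>N$. Thus the tail of $P(a,l)$ past $a_M$ stays outside the ball of radius $N$ about $a_0$, which is precisely what it means for $P(a,l)$ to go to infinity.

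The heart of the matter is $(1)\Rightarrow(2)$. Assume $l$ is high-filling; then $l$ is loop-filling by Lemma~\ref{lemma:loop_filling_ray_filling}, so $l$ meets every loop $a_k$ produced in the construction of $P(a,l)$ and the path $P(a,l)=(a_0,a_1,\dots)$ is infinite. The points $x_k$ used in its construction all lie on $a$ and occur in strictly increasing order along $l$; since $a$ is a loop, $a\cap l$ has no accumulation point on $l$, so the $x_k$ must escape to the far end of $l$ and the initial arcs $(\infty x_k)_l$ exhaust $l$. Because $a_k$ begins (from $\infty$) with $(\infty x_k)_l$ and $l$ crosses infinitely many equatorial segments, it follows that for every $K$ the loop $a_k$ $K$-begins like $l$ once $k$ is large, so by Lemma~\ref{lemma:cover_convergence_k_begins} the sequence $(a_k)$ cover-converges to $l$. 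Suppose now, for contradiction, that $P(a,l)$ is bounded, say $d(a_0,a_k)\le R$ for all $k$, and choose loop-graph geodesics $g_k$ from $a_0$ to $a_k$, each of length at most $R$. Passing to a subsequence we may assume all the $g_k$ have a common length $m\le R$ and, using compactness of $\partial\tilde S$ and Lemma~\ref{lemma:simple_is_compact} via a diagonal extraction over the $m+1$ vertex positions, that the $j$-th vertex of $g_k$ cover-converges to a simple ray or loop $c_j$ for every $0\le j\le m$. Consecutive vertices of $g_k$ are disjoint, so $c_j$ and $c_{j+1}$ are disjoint by Lemma~\ref{lemma:cover_converge_limit}; moreover $c_0=a_0$, and $c_m=l$ since $(a_k)$ cover-converges to $l$. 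Then $(a_0=c_0,c_1,\dots,c_m=l)$ is a path of length $m\le R$ in $\RGC$ joining the loop $a_0$ to $l$, so $l$ lies in the connected component of $\RGC$ containing the loops -- the main component -- and is therefore at finite distance from the short rays. This contradicts $l$ being high-filling, so $P(a,l)$ is unbounded.

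I expect the main obstacle to be this final step, namely deducing from ``$P(a,l)$ bounded'' that $l$ sits at finite distance from the short rays in $\RGC$. A bounded sequence of loops can cover-converge to very different limits, so the argument must use that this particular sequence, the unicorn path, cover-converges to $l$ itself; this is exactly what lets the limits of the auxiliary loop-graph geodesics be assembled into a finite $\RGC$-path ending at $l$. Setting up that cover-convergence -- which needs both that $P(a,l)$ is infinite (hence the role of loop-filling) and that $a\cap l$ is discrete along $l$ -- is the delicate part; the rest is bookkeeping with the lemmas already proved.
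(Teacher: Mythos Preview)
Your proof is correct, and the central compactness argument for $(1)\Rightarrow(2)$ is essentially identical to the paper's: both show that the unicorn path cover-converges to $l$, then extract cover-convergent subsequences from bounded-length loop-graph paths to assemble a finite $\RGC$-path from a loop to $l$, contradicting high-filling.

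The one genuine difference is in the decomposition. The paper proves $(1)\Rightarrow(3)$ directly, assuming only that $P(a,l)$ fails to go to infinity (so merely that a \emph{subsequence} stays bounded) and running the compactness argument on that subsequence. You instead prove the weaker $(1)\Rightarrow(2)$ and then supply a separate, self-contained implication $(2)\Rightarrow(3)$ using the non-backtracking of unicorn paths (via Lemmas~\ref{lemma:infinite_restrict_to_finite} and~\ref{lemma:unicorn_paths_close_to_geo}). The paper's route is slightly more economical --- one implication fewer --- while your route isolates the pleasant fact that unbounded unicorn paths automatically go to infinity, independent of any filling hypothesis on $l$. Both are fine; the content is the same.
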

\begin{proof}
First note that $(2)$ is an immediate consequence of $(3)$. We prove that $(2)$ implies $(1)$. Lemmas~\ref{lemma:unicorn_path_non_ray_filling} and~\ref{lemma:unicorn_path_1_ray_filling} show that any non-ray-filling and any $2$-filling long ray $l$ has $P(a,l)$ bounded for all $a$.  By Lemma~\ref{lemma:no_k_filling}, there is no $k$-filling ray for $k>2$.  Hence any long ray $l$ in the main component of $\RG$ (i.e. not high-filling) produces a bounded unicorn path $P(a,l)$ for any loop $a$.  So if $P(a,l)$ is unbounded, then $l$ must be high-filling.

Next, we must show that $P(a,l)$ goes to infinity if $l$ is high-filling (i.e. (1) implies~(3)).  Towards this end, let $l$ be high-filling and let a loop $a$ be given.  Suppose towards a contradiction that $P(a,l)$ does not go to infinity.  We may assume that $d(a,x)=N$ for infinitely many $x \in P(a,l)$, and by taking a further subsequence we can produce a subsequence $(y_i)$ of $P(a,l)$ so that $d(y_i,a)=N$ for all $i$.  Now for each $y_i$, we can produce a sequence $a=y_i^0, y_i^1, \ldots, y_i^N=y_i$ of loops in the loop graph such that $y_i^j$ and $y_i^{j+1}$ are disjoint.

As we have discussed, each ray or loop gives a point on the boundary $\partial\tilde{S}$ of the conical cover, which is compact.  Therefore, after possibly taking a subsequence of the $y_i$, we may assume that all of the sequences $(y_i^j)_i$ for $0\le j\le N$ cover-converges in $\partial\tilde{S}$.  These limits might be loops or rays, but whatever they are, we name them $a=y_\infty^0, y_\infty^1, \ldots, y_{\infty}^N$.  We apply Lemma~\ref{lemma:cover_converge_limit}, which says that the limits of sequences of pairs of disjoint simple rays or loops are disjoint simple rays or loops (again, we don't actually know if they are rays or loops), we observe that this sequence is a path in the completed ray graph $\RGC$ starting at $a$.  Furthermore, by the construction of an infinite unicorn path, the limit object $y_\infty^N = \lim_{i\to\infty}y_i$ must be the original ray $l$.  Hence we have produced a path in the completed ray graph $\RGC$ from $a$ to $l$.  Since $l$ is high-filling, this is a contradiction.
\end{proof}

\begin{lemma}\label{lemma:unicorn_paths_disjoint_rays}
If $l$ and $l'$ are two disjoint long rays, and $l'$ is loop-filling, then for every loop $a$, $P(a,l)$ is included in the $1$-neighborhood of $P(a,l')$.
\end{lemma}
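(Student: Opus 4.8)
The plan is to build, for each element $a_k = (\infty x_k)_l \cup (\infty x_k)_a \in P(a,l)$, a nearby element of $P(a,l')$ by running the two unicorn constructions in parallel and using the fact that $l'$ is loop-filling to force intersections with $l'$ to occur where we need them. Concretely, fix $a_k \in P(a,l)$ and let $l_k = (\infty x_k)_l$ be the initial segment of $l$ it uses. Since $l$ and $l'$ are disjoint, the full ray $l$ is disjoint from $l'$; in particular $l_k$ is disjoint from $l'$. I would then follow $l'$ from $\infty$ and look at its first intersection with $a_k$. Because $a_k$ is the concatenation of $l_k$ (disjoint from $l'$) and the arc $(\infty x_k)_a \subseteq a$, any intersection of $l'$ with $a_k$ is actually an intersection of $l'$ with $a$, and the first such intersection as we travel along $l'$ is exactly the point $x_1'$ used to build $a_1' \in P(a,l')$ — so $a_1'$ and $a_k$ both contain the common arc along $a$ and are disjoint from it appropriately; I would check $a_k$ and $a_1'$ are disjoint directly. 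This handles $a_k$ when $l'$ meets $a$ and $l'$'s first step already "covers" $a_k$; the subtlety is $a_k$ whose $a$-part is very short, where we may need a later element of $P(a,l')$.

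The cleaner way to organize this is the following. For each $j$, write $a_j' = (\infty x_j')_{l'}\cup(\infty x_j')_{a} \in P(a,l')$. I claim that for every $k$ there is a $j$ with $a_k$ disjoint from $a_j'$, and conversely for every $j$ there is a $k$ with $a_j'$ disjoint from $a_k$; this gives Hausdorff distance (hence $1$-neighborhood containment) in both directions, though the statement only asks for one inclusion. Given $a_k$, its $a$-part $(\infty x_k)_a$ is an initial arc of $a$ of some length; choose $j$ so that $(\infty x_j')_a$ is the initial arc of $a$ that reaches at least as far, i.e. $x_j'$ is at or beyond $x_k$ along $a$ (if $P(a,l')$ runs out first, i.e. $l'$ meets $a$ fewer times, then since $l'$ is loop-filling it actually must meet every nontrivial loop, so by Lemma~\ref{lemma:disjoint_from_loop_filling} the situation is constrained — I would argue $P(a,l')$ is then forced to be long enough, using that $l'$ is loop-filling and unbounded). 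Now $a_k$ uses $l_k \subseteq l$ disjoint from $l'$, and $a_j'$ uses $(\infty x_j')_{l'} \subseteq l'$ which is disjoint from $l$; the two $a$-parts are initial arcs of $a$, one contained in the other, hence disjoint from everything the other needs. So $a_k$ and $a_j'$ are disjoint.

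The main obstacle I expect is the bookkeeping around the case where $l$ (or $l'$) meets $a$ only finitely often, so that one of the unicorn "paths" is a finite sequence, and ensuring the index matching $k \leftrightarrow j$ is well-defined and monotone. This is exactly where loop-filling of $l'$ is used essentially rather than cosmetically: if $P(a,l')$ were a short finite path we could not always find the required $a_j'$, but loop-filling (via Lemma~\ref{lemma:disjoint_from_loop_filling} and Lemma~\ref{lemma:unbounded_iff_high_filling}, noting a loop-filling long ray disjoint from another long ray is high-filling by Lemma~\ref{lemma:loop_filling_ray_filling} and the preceding discussion) guarantees $P(a,l')$ is unbounded, hence long enough. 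The remaining steps — verifying disjointness of $a_k$ and $a_j'$ from the decomposition into an $l$- or $l'$-segment plus an $a$-segment, exactly as in the remark following the definition of infinite unicorn paths — are routine.
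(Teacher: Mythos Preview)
Your first paragraph is close to the right idea, but you misidentify the intersection point and then abandon the approach; the second paragraph has a genuine gap, and the loop-filling hypothesis is used in the wrong place.

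\textbf{The gap.} You pair $a_k$ with $a_j'$ chosen so that $x_j'$ lies at or beyond $x_k$ along $a$, i.e.\ $(\infty x_k)_a \subseteq (\infty x_j')_a$, and then assert $a_k$ and $a_j'$ are disjoint. Three of the four cross-checks go through, but one does not: the segment $l_k = (\infty x_k)_l$ can meet $a$ at points lying in $(\infty x_j')_a \setminus (\infty x_k)_a$. As you walk along $l$ toward $x_k$, earlier intersections of $l$ with $a$ may well sit \emph{farther} along $a$ than $x_k$ does; each such point lies in the longer $a$-arc of $a_j'$ and gives a transverse intersection of $a_k$ with $a_j'$. (Choosing $x_j'$ on the other side of $x_k$ produces the symmetric problem: then $(\infty x_j')_{l'}$ may cross $(\infty x_k)_a$.) So the index-matching scheme does not give disjointness.

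\textbf{The misuse of loop-filling.} You use it to claim $P(a,l')$ is unbounded, hence ``long enough''. But Lemma~\ref{lemma:unbounded_iff_high_filling} characterizes unboundedness by \emph{high}-filling, and loop-filling does not imply high-filling (there may be $2$-filling rays). The assertion that ``a loop-filling long ray disjoint from another long ray is high-filling'' is not what Lemma~\ref{lemma:loop_filling_ray_filling} says and is not established anywhere.

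\textbf{What actually works} is your first paragraph, carried out correctly. Given $a_k = \bar l \cup \bar a$ with $\bar a = (\infty x_k)_a$, loop-filling is used exactly once: since $l'$ is disjoint from $\bar l \subset l$, if $l'$ also missed $\bar a$ it would miss the loop $a_k$, contradicting loop-filling. So $l'$ meets $\bar a$; take $p$ to be the \emph{first} such point along $l'$ (not the first intersection with all of $a$, which is why this is not $x_1'$). Then $y=(\infty p)_{l'}\cup(\infty p)_a$ is simple (the $l'$-arc meets $\bar a$ only at $p$, and $(\infty p)_a\subset\bar a$), hence $y\in P(a,l')$. Disjointness of $y$ from $a_k$ is now automatic: $(\infty p)_{l'}$ avoids $\bar l$ (since $l\cap l'=\emptyset$) and avoids $\bar a$ (choice of $p$); and $l_k$ cannot meet $(\infty p)_a\subset\bar a$ because $a_k$ is simple. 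No index-matching or unboundedness argument is needed.
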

\begin{proof}
Let $x$ be any element of $P(a,l)$. We will find $y \in P(a,l')$ such that $x$ and $y$ are disjoint. By definition, the loop $x$ is the union of a sub-segment $\bar{l}$ of $l$ and a sub-segment $\bar{a}$ of $a$. We first prove that $l'$ intersects $\bar{a}$. If not, as $l'$ is disjoint from $l$, $l'$ is in particular disjoint from $\bar{l}$, hence $l'$ is disjoint from $x$. This is a contradiction because $l'$ is loop-filling. Hence $l'$ intersects $\bar{a}$. Denote by $p$ the first intersection point between $l'$ and $\bar{a}$ when we follow $l'$ from $\infty$. The union $y=(\infty p)_{l'} \cup (p \infty)_a$ is an element of $P(a,l')$. Moreover, by construction, it is disjoint from $x$.
\end{proof}

\begin{lemma} \label{lemma:unicorn_paths_close_implies_disjoint}
Let $l,l'$ be two distinct high-filling rays, let $a,a'$ be two loops and let there be some $N\in \N$ such that $P(a,l)$ is included in the $N$-neighborhood of $P(a',l')$. Then $l$ and $l'$ are disjoint.
\end{lemma}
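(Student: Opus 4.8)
The plan is to argue by contradiction: suppose $l$ and $l'$ are distinct high-filling rays with $P(a,l)$ contained in the $N$-neighborhood of $P(a',l')$, but $l$ and $l'$ intersect. Since both are high-filling, both are loop-filling by Lemma~\ref{lemma:loop_filling_ray_filling}. The key idea is to use the fact that, by Lemma~\ref{lemma:unbounded_iff_high_filling}, both unicorn paths $P(a,l)$ and $P(a',l')$ go to infinity in $\RG$, and to extract limiting rays on the boundary $\partial\tilde S$ of the conical cover that coincide, forcing $l = l'$. Concretely, I would first reduce to a convenient basepoint: by Lemma~\ref{lemma:unicorn_paths_different_loops}, changing the loop $a$ (resp. $a'$) only moves $P(a,l)$ (resp. $P(a',l')$) by a bounded Hausdorff distance, so after enlarging $N$ we may assume $a = a'$, and indeed we may choose $a$ as convenient.

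Next I would analyze how the unicorn paths $k$-begin like their defining long rays. By Lemma~\ref{lemma:uinicorn_path_begin_like_b} (applied to the finite unicorn subpaths guaranteed by Lemma~\ref{lemma:infinite_restrict_to_finite}), the terms $x_i$ of $P(a,l)$ eventually $k$-begin like $l$ for any fixed $k$, once $i$ is large enough; the same holds for $P(a',l')$ and $l'$. Since $P(a,l)$ goes to infinity (Lemma~\ref{lemma:unbounded_iff_high_filling}), for each large $i$ there is a term $y_{j(i)} \in P(a',l')$ within distance $N$ of $x_i$, and since $P(a',l')$ also goes to infinity, $j(i) \to \infty$. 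The heart of the argument is to convert the metric closeness $d(x_i, y_{j(i)}) \le N$ in the loop graph into combinatorial closeness — i.e., to show that $x_i$ and $y_{j(i)}$ must $k$-begin like each other for $k$ growing with $i$. This is where I expect the main obstacle: distance $N$ in the loop graph does not immediately bound how many initial equatorial segments two loops share, because short loops near $\infty$ connect almost anything. To handle this I would invoke the quasi-geodesic/Morse machinery: the unicorn path $P(a,l)$ restricted to $[0,i]$ is a geodesic-like path (Lemma~\ref{lemma:unicorn_paths_close_to_geo}), and I want to say that since $x_i$ and $y_{j(i)}$ are boundedly close and both lie far out along paths heading to the same region of $\partial\tilde S$ (if $l = l'$ fails), a contradiction arises. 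More robustly, I would argue directly on $\partial\tilde S$: extract cover-convergent subsequences of $(x_i)$ and $(y_{j(i)})$ with limits $p, q \in \partial\tilde S$; by Lemma~\ref{lemma:uinicorn_path_begin_like_b} and Lemma~\ref{lemma:cover_converge_iff_k_begin}, $p$ is the endpoint of $l$ and $q$ is the endpoint of $l'$.

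Finally, to conclude $p = q$ (hence $l = l'$, the desired contradiction), I would use that $x_i$ and $y_{j(i)}$ are within loop-graph distance $N$ together with the structure of the conical cover: a loop-graph path of length $N$ between two loops that both cross very many initial equatorial segments can only "disagree" after a bounded-in-$N$ number of those segments — intuitively, each step of a loop-graph path can change only a bounded amount of initial combinatorial data relative to how deep into the cover you already agree, because a loop disjoint from $x_i$ that $m$-begins like $x_i$ and is disjoint from the next loop forces that next loop to share a comparable initial segment (this is the same mechanism underlying the $\alpha_k$ examples and the proof that unicorn paths $k$-begin like their target). Making this last step rigorous is the real work; I would formalize it as: if $d(x_i, y_{j(i)}) \le N$ and both $x_i, y_{j(i)}$ $k$-begin like their respective long rays with $k$ large, then $l$ and $l'$ must $(k - f(N))$-begin like each other for some function $f$ independent of $i$, so letting $i \to \infty$ gives that $l$ and $l'$ $\infty$-begin alike, i.e. $l = l'$, contradicting distinctness. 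If a clean combinatorial bound proves elusive, the fallback is the cover-convergence/compactness route: pass to limits of the full length-$N$ connecting paths as in the proof of Lemma~\ref{lemma:unbounded_iff_high_filling} to produce a finite $\RGC$-path between $l$ and $l'$, contradicting that distinct high-filling rays lie in distinct singleton cliques (Lemma~\ref{lemma:cliques}).
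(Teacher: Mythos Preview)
Your ``fallback'' in the final sentence is exactly the paper's proof, and it is the right argument: take for each $n$ a length-$N$ path $x_n = x_n^0, x_n^1, \ldots, x_n^N = y_n$ in the loop graph, pass to subsequences so that each $(x_n^i)_n$ cover-converges, and apply Lemma~\ref{lemma:cover_converge_limit} to obtain a path $l = l_0, l_1, \ldots, l_N = l'$ in $\RGC$. One correction to your wrap-up: cliques of high-filling rays are \emph{not} singletons in general (the paper exhibits cliques with several elements), so the path in $\RGC$ does not contradict $l \ne l'$. Rather, it places $l$ and $l'$ in the same connected component of $\RGC$, which by Lemma~\ref{lemma:cliques} (or Theorem~\ref{theorem:completed_components}) is a clique; hence $l$ and $l'$ are disjoint. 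That is the conclusion of the lemma directly --- no contradiction hypothesis is needed.

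Your primary approach, by contrast, has a genuine gap. The bound you posit --- that $d(x,y) \le N$ together with ``$x$ $k$-begins like $l$'' and ``$y$ $k$-begins like $l'$'' forces $l$ and $l'$ to $(k - f(N))$-begin alike --- is false as stated: two disjoint loops (distance $1$) can have entirely different first equatorial crossings, and iterating gives no control on initial segments from distance alone. A correct statement in this spirit does exist (see Lemma~\ref{lemma:control_of_neighborhoods_of_geodesics_general_m}: loops within distance $m$ of something that $N$-begins like $l$ must $k$-begin like \emph{some ray in the clique of $l$}), but its proof is precisely the cover-convergence/compactness extraction you describe as the fallback. So the combinatorial route is not an independent argument; drop it and lead with the compactness argument.
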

\begin{proof}
This proof is similar to the proof of Lemma~\ref{lemma:unbounded_iff_high_filling}.
We denote $P(a,l)$ by $(x_n)$. According to the hypothesis, for every $n$ there exists $y_n \in P(a',l')$ such that $d(x_n,y_n)\leq N$. For every $n$, we choose a path $(x_n=x^0_n,x^1_n,...,x^N_n=y_n)$ of length $N$ between $x_n$ and $y_n$. Because $(x_n)=P(a,l)$, $(x_n)$ cover-converges to $l$, and $(y_n)$ cover-converges to $l'$.  By taking subsequences, we can assume that each sequence $x^i_n$ cover-converges to some loop or ray $l_i$.  Applying the same idea as Lemma~\ref{lemma:unbounded_iff_high_filling}, we see that $l_i$ is disjoint from $l_{i+1}$, and hence $l=l_0, \ldots, l_N=l'$ is a path in the completed ray graph.  Since $l$ and $l'$ are high-filling rays, it follows from Theorem~\ref{theorem:completed_components} that $l$ and $l'$ are disjoint.
\end{proof}

\section{Gromov boundaries and unicorn paths}
\label{section:Gromov_boundaries}

We are building up to Theorem~\ref{theorem:boundary_bijection}, for which we need to understand some background about what the Gromov boundary is and how unicorn paths are related to quasi-geodesics.  In this section, we prove several key facts which will be assembled in Section~\ref{section:bijection}.

\subsection{Non-properness}

Our aim is to study the boundary of the ray graph $\RG$, or equivalently the boundary of the loop graph or main component of $\RGC$.  Recall that a metric space is proper if every closed ball is compact.  In proper Gromov hyperbolic spaces, we can readily understand the boundary using geodesic rays in the space.  Unfortunately, $\RG$, like the standard curve complex for finite-type surfaces, is not proper.  So we must be more careful.

\subsection{The Gromov boundary}

We first recall the definition of the \emph{Gromov-boundary} of a $\delta$-hyperbolic space $X$ (see \cite{Bridson-Hafliger}, or \cite{Kapovich}). Choose a basepoint $p \in X$. A sequence $(x_n)$ of elements of $X$ is said to be \emph{Gromov-converging at infinity} if:
$$\liminf_{i,j \rightarrow \infty} (x_i \cdot x_j)_p=\infty,$$
where $(x_i \cdot x_j)_p$ is the Gromov product:
$$(x_i \cdot x_j)_p:=\frac{1}{2}(d(x_i,p)+d(x_j,p)-d(x_i,x_j)).$$

Two such sequences $(x_n)$ and $(y_n)$ are said to be \emph{equivalent} if:
$$\liminf_{i,j \rightarrow \infty} (x_i \cdot y_j)_p=\infty.$$

\begin{definition}The \emph{Gromov-boundary} of a $\delta$-hyperbolic space $X$, denoted by $\partial X$, is the set of equivalent classes of sequences Gromov-converging to infinity.
\end{definition}

This definition does not depend on the choice of the basepoint $p$. If $X$ is proper and geodesic, it is known that every equivalence class of sequences Gromov-converging to infinity contains a sequence which is a geodesic of $X$. The graph $\RG$ is not proper, and we don't know if there is a geodesic in every equivalence class in its boundary. However, according to Remark $2.16$ of \cite{Kapovich}, there exists a $(1,10\delta)$ quasi-geodesic path in every element of the boundary. This quasi-geodesic will be enough for our purposes.

\begin{remark}
A quasi-isometry between Gromov hyperbolic spaces extends to a bijection of their boundaries (see for example Proposition 6.3 of \cite{Bonk-Schramm}).  
\end{remark}

\subsection{Quasi-geodesics and cover-convergence}

In this section, we show that the only way to approach the boundary of the loop graph is with a sequence cover-converging to a loop-filling ray.

\begin{lemma}\label{lemma:quasi_geodesic_to_long_ray}
Let $(x_n)$ be a quasi-geodesic sequence of loops in the loop graph.  For example, let $(x_n)$ represent a point on the Gromov boundary of the loop graph.  If $\alpha$ is a ray or loop such that some subsequence of $(x_n)$ cover-converges to $\alpha$, then $\alpha$ must be a loop-filling ray.
\end{lemma}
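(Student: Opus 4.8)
The plan is to argue by contradiction: suppose a subsequence of $(x_n)$ cover-converges to $\alpha$, where $\alpha$ is either a short ray or a loop (the only alternatives to a long ray among the possible cover-limits). In either case $\alpha$ is a vertex of the main component of $\RGC$ at bounded distance from the loop graph, and I want to show this forces the quasi-geodesic $(x_n)$ to stay bounded, contradicting that it is an unbounded quasi-geodesic (which it must be, since a quasi-geodesic sequence Gromov-converging to infinity — or more generally any infinite quasi-geodesic — leaves every bounded set).

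First I would reduce to the case where $\alpha$ is a loop: if $\alpha$ is a short ray, replace it by a disjoint loop $\alpha'$, and note that $k$-beginning like a short ray that is disjoint from $\alpha'$ still controls the behavior near $\alpha'$ in the loop graph (alternatively, work directly with the short ray, since Lemma~\ref{lemma:uinicorn_path_begin_like_b} and the $k$-begin machinery apply to short rays too). So assume $\alpha$ is a loop and pass to the cover-converging subsequence, still called $(x_n)$. By Lemma~\ref{lemma:cover_converge_iff_k_begin} (or Lemma~\ref{lemma:cover_convergence_k_begins}), for every $k$ there is an $I$ such that for all $n\ge I$, $x_n$ $k$-begins like $\alpha$. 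Now fix any loop $b$ disjoint from $\alpha$. I claim $d(x_n, b)$ is bounded. Indeed, $\alpha$ intersects the equator some fixed finite number $p$ of times, and $b$ does too, say $q$ times; choose $k$ large enough (roughly $k > (q+1)\lceil p/2\rceil$ or so) that $k$-beginning like $\alpha$ forces, by the argument in Lemma~\ref{lemma:uinicorn_path_begin_like_b} run in reverse, that a unicorn-type modification of $x_n$ agreeing with $\alpha$ on its first $k$ equator segments is disjoint from $b$. More cleanly: the unicorn path $P(x_n, b)$ terminates at $b$, and since $x_n$ $k$-begins like $\alpha$ which is disjoint from $b$, the early part of $x_n$ never meets $b$, so the number of intersections of $x_n$ with $b$ is bounded independently of $n$ (it is controlled by the tail of $x_n$, which lies in a bounded region once we fix $k$ large relative to $p$ and $q$). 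By the standard bound $d(x_n, b) \le 2\log_2(i(x_n,b)) + 2$ for loops (as in \cite{Juliette, Hensel-Przytycki-Webb}, or simply: a unicorn path between loops with $m$ intersections has length $O(\log m)$), we get $d(x_n, b)$ uniformly bounded.

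Hence the subsequence $(x_n)$ lies in a bounded ball around $b$. But a quasi-geodesic sequence — in particular one representing a point of the Gromov boundary — cannot have a subsequence contained in a bounded set: the Gromov products $(x_i \cdot x_j)_p$ along it tend to infinity, which forces $d(x_n, p) \to \infty$. This contradiction shows $\alpha$ cannot be a loop, and by the reduction above it cannot be a short ray either, so $\alpha$ is a long ray.

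The main obstacle I anticipate is making rigorous the claim that cover-convergence to $\alpha$ bounds the intersection number $i(x_n, b)$ uniformly. Cover-convergence only controls the \emph{beginning} of $x_n$ (the first $k$ equator crossings), not its tail, and a priori the tail of a long loop could spiral and accumulate many intersections with $b$. The fix is that $x_n$ is a \emph{loop}, not a long ray, so it has only finitely many equator crossings total; but we need the \emph{total} count to be bounded over $n$, which does not follow from cover-convergence alone. The right move is probably to avoid intersection-counting and instead argue geometrically: once $x_n$ $k$-begins like $\alpha$ for $k$ large, one can find a short ray or loop $c_n$ that "caps off" $x_n$ right after it has followed $\alpha$, producing a loop disjoint from both $x_n$ and from $b$ (using that $\alpha$ is disjoint from $b$ and escapes into a region bounded away from $b$), so that $d(x_n, b) \le 2$; this is exactly the nested-loop trick used in the proof of Lemma~\ref{lemma:unicorn_path_non_ray_filling}. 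Making that capping-off construction precise — in particular ensuring the capping loop can be chosen disjoint from $b$ uniformly — is the technical heart of the argument.
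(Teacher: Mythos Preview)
There is a genuine gap in your approach, and it lies exactly where you suspected, but it is more fundamental than a technicality to be patched.

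Your argument rests on invoking Lemma~\ref{lemma:cover_converge_iff_k_begin} (equivalently Lemma~\ref{lemma:cover_convergence_k_begins}) to conclude that $x_n$ $k$-begins like $\alpha$ for all $k$. But both lemmas are stated, and are only valid, when the limit $\alpha$ is a \emph{long ray}: the proof uses that the endpoint of $\alpha$ is not on the boundary of any fundamental domain. A short ray crosses the equator only finitely many times (after normalizing, \emph{zero} times), so ``$k$-begins like $\alpha$ for all $k$'' is vacuous or undefined; and a loop endpoint is a cusp on the boundary of a fundamental domain, so the equivalence fails there too. Thus the entire $k$-begin framework you set up does not apply to either case you need. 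Your reduction ``replace the short ray $\alpha$ by a disjoint loop $\alpha'$'' also does not help: the sequence $(x_n)$ cover-converges to $\alpha$, not to $\alpha'$, so you cannot transfer the hypothesis.

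The capping-off idea from Lemma~\ref{lemma:unicorn_path_non_ray_filling} does not rescue this: there the long ray $l$ is fixed and one shrinks loops around a short ray disjoint from $l$, so the nested loops eventually avoid any \emph{fixed finite} initial segment of $l$. Here you would need a loop $c_n$ disjoint from the \emph{entire} loop $x_n$, whose tail is completely uncontrolled by cover-convergence. The paper's proof avoids this trap by a rather different route: for the loop case it uses that loop endpoints are isolated (so cover-convergence forces $x_n=\alpha$ infinitely often, contradicting quasi-geodesicity); for the short ray case it uses Lemma~\ref{lemma:mod_transitive_on_short_rays} to arrange that $\alpha$ does not cross the equator and ends at a non-endpoint of a Cantor gap, deduces that the \emph{first} equator crossings of the $x_n$ range over infinitely many gaps, and then combines Lemma~\ref{lemma:uinicorn_path_begin_like_b} with Lemma~\ref{lemma:unicorn_paths_close_to_quasi} and a compactness/limiting argument in $\RGC$ to produce a path of bounded length from some fixed $x_k$ to the clique of rays not crossing the equator, contradicting unboundedness of the quasi-geodesic.
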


\begin{proof}
Recall from Lemma~\ref{lemma:simple_is_compact} that in the set of endpoints on the boundary of the conical cover of lifts of simple rays and loops, the set of endpoints of loops is isolated.  Therefore, if a subsequence of $x_n$ converges to a loop $\alpha$, that means that $\alpha$ appears infinitely often in the sequence $x_n$, which contradicts the fact that it is quasi-geodesic.  Thus $\alpha$ must be a ray, and we are left proving that $\alpha$ must be a loop-filling ray.

Suppose towards a contradiction that there exists
some loop $\beta$ disjoint from $\alpha$. Up to extending $(x_n)$ by a finite path between $\beta$ and $x_0$ if necessary, we can assume that $(x_n)$ is a quasi-geodesic which has a subsequence which cover-converges to $\alpha$ and whose first term $x_0$ is disjoint from $\alpha$. Abusing notation, we denote again by $(x_n)$ the subsequence which cover-converges to $\alpha$.

Denote by $\tilde \alpha$ and $\tilde x_n$ the lifts of $\alpha$ and $x_n$, respectively, which start at the preferred lift $\tilde{\infty}$ of $\infty$.
Consider also all the lifts of $x_0$ in the conical cover: none of them intersect $\tilde \alpha$. Because $(x_n)$ is a quasi-geodesic, there exists $N$ such that for any $n\geq N$, $x_n$ intersects $x_0$. For every such $n$, we denote by $s_n$ the first lift of $x_0$ that $x_n$ intersects.

Because $x_0$ is a loop, each preimage of each hemisphere only intersects finitely many preimages of $x_0$.  Consequently, for any $\epsilon$, there are only
finitely many lifts of $x_0$ whose endpoints on the circle are
distance greater than $\epsilon$ apart.  And because $\alpha$ is a ray, the end point of $\tilde{\alpha}$ on the boundary of the conical cover is not a lift of $\infty$.  Therefore, the sequence $s_n$ must have infinitely many distinct elements, and the endpoints must both converge to the endpoint of $\tilde{\alpha}$.
It follows that if a sequence of oriented loops $(w_n)$ is such that for every $n$, the first lift of $x_0$ intersected by $w_n$ is $s_n$, then $w_n$ cover-converges to $\alpha$.

Now we apply Lemma~\ref{lemma:unicorn_paths_close_to_quasi}: since $(x_n)$ is a subsequence of a quasi-geodesic, there is a $C$ such that for all $n,k$ with $n>k$, we have $P(x_0, x_k)$ contained in the $C$ neighborhood of $P(x_0,x_n)$. Choose $x_k$ at distance at least $C+2$ from $\alpha$. For each $n>k$ there is some loop $y_{n}$ in $P(x_0,x_{n})$ within distance $C$ of $x_k$. For every $n>k$, we can make a path of length $C$ in the loop graph between $x_k$ and $y_n$. Denote it by $x_k, z_{1,n}, z_{2,n}, \ldots, y_n$. By taking a further subsequence, we can make $z_{j,n}$ cover-converge for all $j$.  These limits give us a path $x_k, z_{1,\infty}, z_{2,\infty}, \ldots, y_\infty$ of length $C$ in the completed ray graph. Now note that because $y_n \in P(x_0,x_n)$ for every $n$ and $d(y_n,x_0)\geq 2$, by the definition of unicorn paths we have that the first lift of $x_0$ intersected by $y_n$ is $s_n$ ($x_0$ and $x_n$ must intersect, and all entries in the unicorn path $P(x_0,x_n)$ after the first follow $x_n$ until its first intersection with $x_0$). By the discussion we had in the previous paragraph, it follows that $y_\infty=\alpha$. Hence we have constructed a path of length $C$ between $x_k$ and $\alpha$, which are at distance at least $C+2$: this is a contradiction.
\end{proof}

\subsection{Infinite unicorn paths and quasi-geodesics}

In this section, we relate infinite unicorn paths to quasi-geodesics in the loop graph.  This, in turn, relates unicorn paths to the boundary via our remark above that equivalence classes on the boundary contain quasi-geodesics.  The lemma in this section is approximately saying that sequences conver-converging to a long ray are close to unicorn paths generated by that ray.

Recall that in every $\delta$-hyperbolic geodesic space $X$, quasi-geodesics stay closed to geodesics (see for example Theorem $1.7$, page $401$ of \cite{Bridson-Hafliger}). More precisely, for all $\kappa \geq 1$ and $\epsilon \geq 0$, there exists a constant $M$, called the $(\kappa,\epsilon)$-Morse constant of $X$, such that every $(\kappa,\epsilon)$-quasi-geodesic between any two points $x, y \in X$ is included in the $M$-neighborhood of any geodesic between $x$ and $y$.

\begin{lemma} \label{lemma:unicorn_paths_INFINITE_geodesics_close}
Let $\kappa,\epsilon \geq 0$. Denote by $M$ the $(\kappa,\epsilon)$-Morse constant of the loop graph. Let $(x_n)$ be a $(\kappa,\epsilon)$-quasi-geodesic path of the loop graph such that a subsequence of $(x_n)$ cover-converges to a high-filling ray $l$. Then $(x_n)$ is included in the $(13+M)$-neighborhood of the infinite unicorn path $P(x_0,l)$. 
\end{lemma}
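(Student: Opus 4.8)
The plan is to bound $d(x_n,P(x_0,l))$ for each fixed loop $x_n$ on the path by comparing $x_n$ first with a \emph{finite} unicorn path $P(x_0,x_N)$, where $x_N$ is a term of the cover-converging subsequence with $N$ very large, and then transporting the estimate onto $P(x_0,l)$ using that $P(x_0,x_N)$ and $P(x_0,l)$ share a long common initial segment.

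Concretely, first I would record that $l$ is high-filling, so by Lemma~\ref{lemma:unbounded_iff_high_filling} the path $P(x_0,l)=(x_0=w_0,w_1,w_2,\dots)$ is infinite and $d(x_0,w_j)\to\infty$ (in particular the degenerate case where $P(x_0,l)$ is a finite sequence does not occur). Fix $n$ and pick $N$ in the cover-converging subsequence, with $N>n$ and $N$ large. Since $(x_0,\dots,x_N)$ is a $(\kappa,\epsilon)$-quasi-geodesic it lies in the $M$-neighborhood of a geodesic $g=[x_0,x_N]$, so $x_n$ is within $M$ of $g$; and by Lemma~\ref{lemma:unicorn_paths_close_to_geo}, $g$ lies within Hausdorff distance $13$ of $P(x_0,x_N)$. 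Hence there is a vertex $u\in P(x_0,x_N)$ with $d(x_n,u)\le 13+M$. Writing $P(x_0,x_N)=(v_0,\dots,v_m)$, Lemma~\ref{lemma:unicorns_k_begins} applied to the cover-convergence of the subsequence to $l$ gives, for $N$ large along the subsequence, that $v_j=w_j$ for all $j\le J(N)$ with $J(N)\to\infty$, and $(w_0,\dots,w_{J(N)})=P(x_0,w_{J(N)})$ is a subpath of $P(x_0,l)$ by Lemma~\ref{lemma:infinite_restrict_to_finite}. So if the index $i$ of $u=v_i$ satisfies $i\le J(N)$, then $u=w_i$ lies on $P(x_0,l)$, and $d(x_n,P(x_0,l))\le 13+M$ as claimed.

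The hard part will be showing $i\le J(N)$ once $N$ is large enough (depending on $n$): this is the step I expect to be the main obstacle, since unicorn paths are not geodesics and could a priori backtrack. The argument I would use is a distance bookkeeping. On one hand, $d(x_0,v_i)\le d(x_0,x_n)+d(x_n,u)\le \kappa n+\epsilon+13+M=:R$, a bound independent of $N$. On the other hand, if $i>J:=J(N)$ then $v_i$ lies on the tail unicorn path $P(w_J,x_N)$, which is a subpath of $P(x_0,x_N)$ by Lemma~\ref{lemma:subpaths_of_unicorn_paths} (the degenerate case $m=J+2$ of that lemma is handled directly, since then $d(v_i,v_J)\le 2$), hence $v_i$ is within $6$ of a geodesic $[w_J,x_N]$ by Lemma~\ref{lemma:unicorn_paths_close_to_geo}. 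Since $w_J=v_J\in P(x_0,x_N)$ lies within $13$ of $g=[x_0,x_N]$, the Gromov product $(x_0\cdot x_N)_{w_J}$ is at most about $13$, so standard $\delta$-thin-triangle geometry forces every point of $[w_J,x_N]$ to be at distance at least $d(x_0,w_J)-O(\delta)$ from $x_0$; thus $d(x_0,v_i)\ge d(x_0,w_J)-O(\delta)-6$. As $d(x_0,w_J)\to\infty$ when $J=J(N)\to\infty$, for $N$ large this contradicts $d(x_0,v_i)\le R$. Therefore $i\le J(N)$, so $u\in P(x_0,l)$ and $d(x_n,P(x_0,l))\le 13+M$, which is what we wanted.
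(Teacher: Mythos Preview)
Your proof is correct and follows essentially the same strategy as the paper's: pick $N$ large in the cover-converging subsequence so that $P(x_0,x_N)$ shares a long initial segment with $P(x_0,l)$, use the Morse constant together with Lemma~\ref{lemma:unicorn_paths_close_to_geo} to place $x_n$ within $13+M$ of $P(x_0,x_N)$ via the geodesic $[x_0,x_N]$, and then argue by a distance count that the nearby point must lie in the shared initial segment.

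The only real difference is in how that last step is carried out. The paper works on the geodesic side: it fixes in advance an index $A$ so that $y_A$ is the first vertex of $P(x_0,l)$ at distance $d(x_0,x_j)+2M+6$ from $x_0$, chooses $N$ so the first $A$ terms of $P(x_0,x_N)$ and $P(x_0,l)$ coincide, and then checks by the triangle inequality that the geodesic point $z$ near $x_j$ lies on the initial subsegment $[x_0,w]$, which in turn is $13$-close to $\{y_0,\dots,y_A\}\subset P(x_0,l)$. You instead work on the unicorn side: you locate $u=v_i\in P(x_0,x_N)$ near $x_n$, and then rule out $i>J(N)$ by invoking Lemma~\ref{lemma:subpaths_of_unicorn_paths} to place $v_i$ on $P(w_J,x_N)$ and using a Gromov-product computation to force $d(x_0,v_i)\ge d(x_0,w_J)-O(1)$. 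Your route is slightly more machinery-heavy (it uses the subpath lemma and the Gromov-product identity, neither of which the paper's proof invokes), but it makes the ``no backtracking past $w_J$'' step fully explicit, whereas the paper's corresponding claim that each $y_n$ with $n\le A$ is $6$-close to $[x_0,w]$ is asserted without detail. Two minor remarks: the $O(\delta)$ in your estimate is not actually needed (the inequality $d(x_0,[w_J,x_N])\ge (w_J\cdot x_N)_{x_0}=d(x_0,w_J)-(x_0\cdot x_N)_{w_J}$ holds in any geodesic space), and $w_J\in P(x_0,x_N)$ is in fact within $6$ of $[x_0,x_N]$ rather than $13$.
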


\begin{proof}
It will be helpful during this proof to consult Figure~\ref{figu:unicorn_path_quasi}, as there are many details to keep track of.

We denote by $(y_n)$ the unicorn path $P(x_0,l)$, which is infinite and tends to infinity according to Lemma~\ref{lemma:unbounded_iff_high_filling}. Let $j \in \N$. Let us show that $x_j$ is in the $(13+M)$-neighborhood of $P(x_0,l)$. We denote by $d_j$ the distance $d(x_0,x_j)$.

Let $A \in \N$ such that $y_A$ is the first element of $P(x_0,l)$ at distance $d_j+2M+6$ from $x_0$; that is:
\begin{itemize}
\item $d(x_0,y_A)=d_j+2M+6$;
\item For every $0<k<A$, $d(x_0,y_k)<d_j+2M+6$.
\end{itemize}

\begin{figure}[htb]
\labellist
\pinlabel $x_0$ at 5 12
\pinlabel $x_N$ at 280 11
\pinlabel $x_j$ at 105 -6
\pinlabel $y_A$ at 110 62
\pinlabel $w$ at 116 41
\pinlabel $z$ at 93 23
\pinlabel $P(x_0,l)$ at 200 62
\pinlabel $P(x_0,x_N)$ at 250 41
\pinlabel $[x_0,x_N]$ at 150 23
\pinlabel $(x_n)$ at 170 -2
\endlabellist
\centering
\vspace{0.3cm}
\includegraphics[scale=1.1]{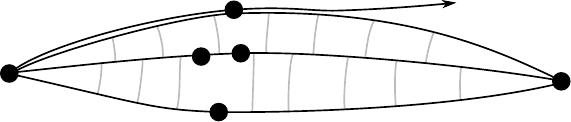}
\caption{The picture for the proof of Lemma~\ref{lemma:unicorn_paths_INFINITE_geodesics_close}.  The gray lines indicate the sequences are uniformly close.  The sequences $P(x_0,l)$ and $P(x_0,x_N)$ are actually the same until at least $y_A$.}
\label{figu:unicorn_path_quasi}
\end{figure}

According to Lemma~\ref{lemma:unicorns_k_begins}, there exists $N$ such that $P(x_0,x_{N})$ and $P(x_0,l)$ have the same $A$ first terms. Consider a geodesic $[x_0,x_N]$. According to Lemma~\ref{lemma:unicorn_paths_close_to_geo}, the path $P(x_0,x_N)$ is included in the $6$-neighborhood of $[x_0,x_N]$. Let $w \in [x_0,x_N]$ be the closest point from $x_0$ which satisfies $d(y_A,w) \leq 6$. For every $n \leq A$, $y_n$ is $6$-close to $[x_0,w]$. Hence $[x_0,w]$ is in the $13$-neighborhood of $P(x_0,l)$. The triangle inequality gives us $d(x_0,y_A) \leq d(x_0,w)+d(y_A,w)$, hence $d(x_0,w) \geq d_j +2M$.

Because $(x_n)$ is a $(\kappa,\epsilon)$-quasi-geodesic, $x_j$ is in the $M$-neighborhood of $[x_0,x_N]$. Let $z \in [x_0,x_N]$ such that $d(x_j,z) \leq M$. The triangle inequality gives us $d(x_0,z) \leq M + d_j$. Hence $z \in [x_0,w]$. It follows that $x_j$ is at distance at most $M+13$ from $P(x_0,l)$.
\end{proof}

\subsection{Infinite unicorn paths and the Gromov boundary}

Finally, we show that unicorn paths of high-filling rays produce points on the boundary of the ray graph $\RG$.

\begin{lemma}\label{lemma:high_filling_to_infinity}
If $l$ is high-filling, then $P(a,l)$ Gromov-converges to infinity for any loop $a$.
\end{lemma}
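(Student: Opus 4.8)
The plan is to take the basepoint $p = a = x_0$ and show directly that the sequence $P(a,l) = (a = x_0, x_1, x_2, \dots)$ satisfies $\liminf_{i,j\to\infty}(x_i\cdot x_j)_a = \infty$. Since $l$ is high-filling, Lemma~\ref{lemma:unbounded_iff_high_filling} gives us the two facts we will lean on: the path is genuinely infinite, and it \emph{goes to infinity}, i.e.\ $d(a,x_n)\to\infty$ as $n\to\infty$ (not merely unbounded). The remaining ingredients are Lemma~\ref{lemma:infinite_restrict_to_finite}, which says that for $j\ge 3$ the finite unicorn path $P(a,x_j)$ is exactly the initial segment $(x_0,\dots,x_j)$ of $P(a,l)$, so in particular $x_i$ is a vertex of $P(a,x_j)$ for all $i\le j$; and Lemma~\ref{lemma:unicorn_paths_close_to_geo}, which says $P(a,x_j)$ lies in the $6$-neighborhood of any geodesic $[a,x_j]$ of the loop graph.

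First I would fix indices $3\le i\le j$ and use the above to find a point $w$ on a geodesic $[a,x_j]$ with $d(x_i,w)\le 6$. Because $w$ lies on this geodesic we have $d(a,x_j)=d(a,w)+d(w,x_j)$; feeding this together with the triangle inequalities $d(a,x_i)\ge d(a,w)-6$ and $d(x_i,x_j)\le d(w,x_j)+6$ into the definition of the Gromov product yields $(x_i\cdot x_j)_a\ge d(a,w)-6\ge d(a,x_i)-12$. Invoking the symmetry of the Gromov product to cover the case $i>j$ as well, this produces the uniform estimate $(x_i\cdot x_j)_a\ge \min\{d(a,x_i),\,d(a,x_j)\}-12$ valid for all $i,j\ge 3$.

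Finally, since $d(a,x_n)\to\infty$ by Lemma~\ref{lemma:unbounded_iff_high_filling}, the right-hand side of this estimate tends to infinity as $i,j\to\infty$, so $\liminf_{i,j\to\infty}(x_i\cdot x_j)_a=\infty$, i.e.\ $P(a,l)$ Gromov-converges to infinity. The one point that genuinely matters is that we need the strong "goes to infinity" conclusion of Lemma~\ref{lemma:unbounded_iff_high_filling}(3) rather than mere unboundedness, since an unbounded path can keep returning near the basepoint and then fail to converge at infinity; so the substantive work has already been done in proving that lemma, and what is left here is the routine fellow-traveling estimate above. A minor bookkeeping point is that Lemma~\ref{lemma:infinite_restrict_to_finite} only applies for $j\ge 3$, which is harmless for a $\liminf$ over $i,j\to\infty$.
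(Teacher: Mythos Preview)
Your proof is correct and uses the same three ingredients as the paper's (Lemma~\ref{lemma:unbounded_iff_high_filling}, Lemma~\ref{lemma:infinite_restrict_to_finite}, and Lemma~\ref{lemma:unicorn_paths_close_to_geo}), so the approaches are essentially the same. The only cosmetic difference is in the final estimate: the paper bounds $(x_i\cdot x_j)_a$ via the $\delta$-hyperbolicity inequality $(x_i\cdot x_j)_a \ge d(a,[x_i,x_j]) - 2\delta$ together with the fact that $P(x_i,x_j)\subset P(a,l)$ lies far from $a$, whereas you project $x_i$ onto the geodesic $[a,x_j]$ and compute directly---your route has the minor advantage of not invoking the hyperbolicity constant at all.
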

\begin{proof}
The proof follows the steps of the proof of Proposition $3.6$ of \cite{Pho-On}. Recall that a sequence $(x_n)$ of loops Gromov-converges to infinity if
\[
\liminf_{i,j \rightarrow \infty} (x_i,x_j)_x=\infty,
\]
where $(x_i,x_j)_x$ is the Gromov product:
\[
(x_i,x_j)_x:=\frac{1}{2}(d(x_i,x)+d(x_j,x)-d(x_i,x_j)).
\]
We will prove this holds for $(a_n) := P(a,l)$. According to Lemma \ref{lemma:unbounded_iff_high_filling}, for every $R>0$, there exists $N>0$ such that $d(a,a_n)>R$ for every $n>N$. For all $m,n\geq N$, we have $(a_n,a_m)_a\geq d(a,[a_n,a_m])-2\delta$. Since $[a_n,a_m]$ and $P(a_n,a_m)$ have Hausdorff distance at most $13$ by Lemma~\ref{lemma:unicorn_paths_close_to_geo}, this implies that: 
\[
(a_n,a_m)_a \geq d(a,P(a_n,a_m))-13-2\delta > R -13-2\delta.
\]
For $|m-n|>2$, $P(a_n,a_m)$ is contained in $P(a,l)$ by Lemma~\ref{lemma:infinite_restrict_to_finite}, so we have the desired property for the sequence $P(a,l)$.  
\end{proof}

\section{The boundary of the ray graph}
\label{section:bijection}

In Sections~\ref{section:unicorn_paths} and~\ref{section:Gromov_boundaries}, we proved many technical facts about how infinite unicorn paths behave and how the filling-ness of a ray determines the behavior of its unicorn path.  In this section, we use these facts to provide a description of the boundary of the ray graph $\RG$.

\subsection{Bijection}

\begin{theorem}\label{theorem:boundary_bijection}
Given a point $p$ on the Gromov boundary $\partial \RG$, there is a nonempty set of long rays $R$ such that the point $p$ is the equivalence class of $P(a,l)$ for all $l \in R$ and any loop $a$.  The set $R$ is a clique (and an entire connected component) of high-filling rays in $\RGC$.  Conversely, given a connected component $R$ of high-filling rays in $\RGC$ (which is necessarily a clique), there is a single boundary point $p \in \partial \RG$ such that $p$ is the equivalence class of $P(a,l)$ for all $l \in R$ and any loop $a$, and $R$ is exactly the set of rays with this property.

Hence the set of high-filling cliques is in bijection with the boundary of the ray graph $\partial \RG$.
\end{theorem}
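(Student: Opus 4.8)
The plan is to exhibit an explicit bijection $\Phi$ between the set of connected components of $\RGC$ consisting of high-filling rays and the Gromov boundary $\partial\RG$. Each such component is a clique by Lemma~\ref{lemma:cliques}, so this will prove the last sentence of the statement once we also identify, for each boundary point $p$, the component $\Phi^{-1}(p)$ with the set $R$ of rays described in the first two sentences. The map is $\Phi(R)=[P(a,l)]$, the equivalence class of the infinite unicorn path $P(a,l)$, for any $l\in R$ and any loop $a$; the whole theorem is then the assertion that $\Phi$ is a well-defined bijection whose fiber over $p$ is exactly $\{l \text{ a long ray} : [P(a,l)]=p \text{ for some loop } a\}$.

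\emph{Well-definedness.} That $P(a,l)$ Gromov-converges to infinity, so that $[P(a,l)]\in\partial\RG$ is defined, is Lemma~\ref{lemma:high_filling_to_infinity}. Independence of the loop $a$ is immediate from Lemma~\ref{lemma:unicorn_paths_different_loops}: $P(a,l)$ and $P(b,l)$ are at finite Hausdorff distance and hence define equivalent Gromov sequences. Independence of $l$ inside a clique $R$ follows from Lemmas~\ref{lemma:loop_filling_ray_filling} and~\ref{lemma:unicorn_paths_disjoint_rays}: any two elements $l,l'$ of $R$ are disjoint and high-filling, hence loop-filling, so $P(a,l)$ and $P(a,l')$ each lie in the $1$-neighborhood of the other and again represent the same boundary point. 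Thus $\Phi$ is a well-defined map on high-filling cliques.

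\emph{Surjectivity.} Given $p\in\partial\RG$, take a $(1,10\delta)$-quasi-geodesic path $(x_n)$ in the loop graph representing $p$ (these exist, as recalled in Section~\ref{section:Gromov_boundaries} following \cite{Kapovich}). Since $\partial\tilde S$ is compact, some subsequence of $(x_n)$ cover-converges to a ray or loop, which by Lemma~\ref{lemma:quasi_geodesic_to_long_ray} is a long ray $l$. I claim $l$ is high-filling: otherwise $l$ is non-ray-filling or $2$-filling (Lemma~\ref{lemma:no_k_filling} excludes $k>2$), so $P(x_0,l)$ is bounded by Lemmas~\ref{lemma:unicorn_path_non_loop_filling},~\ref{lemma:unicorn_path_non_ray_filling} and~\ref{lemma:unicorn_path_1_ray_filling}; but the finite unicorn paths $P(x_0,x_{n_k})$ share arbitrarily long initial segments with $P(x_0,l)$ (Lemma~\ref{lemma:unicorns_k_begins}) while staying in a uniform neighborhood of $[x_0,x_{n_k}]$ (Lemma~\ref{lemma:unicorn_paths_close_to_geo}) and hence of the quasi-geodesic, which forces $P(x_0,l)$ to be unbounded — contradicting Lemma~\ref{lemma:unbounded_iff_high_filling}. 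With $l$ high-filling, Lemma~\ref{lemma:unicorn_paths_INFINITE_geodesics_close} puts $(x_n)$ in the $(13+M)$-neighborhood of $P(x_0,l)$; as both sequences Gromov-converge to infinity (Lemma~\ref{lemma:high_filling_to_infinity}), a routine Gromov-product estimate shows they are equivalent, so $p=[P(x_0,l)]=\Phi(R)$ for the clique $R$ of $l$.

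\emph{Injectivity and the fiber.} Suppose $\Phi(R)=\Phi(R')=p$ and pick $l\in R$, $l'\in R'$ and a loop $a$, so $[P(a,l)]=[P(a,l')]=p$. Since $l,l'$ are high-filling, $P(a,l)$ and $P(a,l')$ are quasi-geodesic rays — finite unicorn paths are uniform quasi-geodesics (the basic property of unicorn paths underlying Lemma~\ref{lemma:unicorn_paths_close_to_geo}; cf.\ \cite{Hensel-Przytycki-Webb,Juliette}), and for high-filling $l$ the path $P(a,l)$ tends to infinity by Lemma~\ref{lemma:unbounded_iff_high_filling} — emanating from the common loop $a$ and converging to the common point $p$ at infinity, so by stability of quasi-geodesics they fellow-travel and in particular $P(a,l)$ lies in some $N$-neighborhood of $P(a,l')$. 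If $l\ne l'$, Lemma~\ref{lemma:unicorn_paths_close_implies_disjoint} then forces $l$ and $l'$ disjoint, and being high-filling they lie in one connected component of $\RGC$ (Lemma~\ref{lemma:cliques}), so $R=R'$. The same reasoning applied to an arbitrary long ray $l'$ with $[P(a',l')]=p$ (which is automatically high-filling, since non-high-filling rays give bounded unicorn paths by the lemmas above and so cannot represent a boundary point) shows $l'$ is disjoint from every $l\in R$, hence $l'\in R$; thus $R$ is exactly the fiber $\Phi^{-1}(p)$. Together these statements say $\Phi$ is a bijection with the stated fibers, which is the theorem.

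I expect the main obstacle to be the surjectivity step: showing that the cover-limit of an arbitrary boundary-representing quasi-geodesic is high-filling and that its unicorn path is equivalent to the original quasi-geodesic. This is the point where Lemma~\ref{lemma:quasi_geodesic_to_long_ray}, Lemma~\ref{lemma:unicorn_paths_INFINITE_geodesics_close}, the four "bounded unicorn path" lemmas, and compactness of $\partial\tilde S$ all must be combined carefully; the one quantitative input beyond the cited lemmas is that finite unicorn paths are uniform quasi-geodesics, which also supplies the fellow-traveling needed for injectivity.
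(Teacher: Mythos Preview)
Your proof is correct and follows essentially the same route as the paper's: well-definedness via Lemmas~\ref{lemma:high_filling_to_infinity}, \ref{lemma:unicorn_paths_different_loops}, and \ref{lemma:unicorn_paths_disjoint_rays}; surjectivity by extracting a cover-limit $l$ of a $(1,10\delta)$-quasi-geodesic representative, using Lemma~\ref{lemma:quasi_geodesic_to_long_ray} and Lemma~\ref{lemma:unbounded_iff_high_filling} to see that $l$ is high-filling, and Lemma~\ref{lemma:unicorn_paths_INFINITE_geodesics_close} to identify the boundary point; and injectivity/fibers via Lemmas~\ref{lemma:unicorn_paths_disjoint_rays} and \ref{lemma:unicorn_paths_close_implies_disjoint}. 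The one place you are slightly more explicit than the paper is in the injectivity step, where you invoke the uniform quasi-geodesicity of (infinite) unicorn paths to pass from ``same boundary class'' to ``bounded Hausdorff distance''; the paper compresses this into a single clause, so your version is a welcome clarification rather than a different argument.
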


\begin{figure}[htb]
\labellist
\endlabellist
\centering
\includegraphics[scale=1]{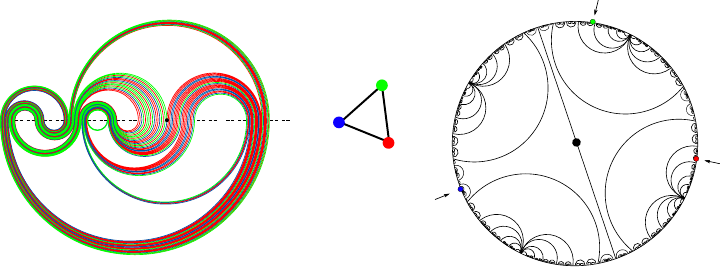}
\caption{Example of the different versions of a point of the boundary: the corresponding clique seen as disjoint high-filling rays on the surface (left), as connected component of the completed ray graph (middle), and as subset of the boundary of the conical cover (right)}
\label{figu:clique}
\end{figure}

\begin{proof}
By Lemma~\ref{lemma:high_filling_to_infinity}, given any short ray $a$ and and any high-filling ray $l$, $P(a,l)$ Gromov-converges to infinity (is in the equivalence class of a point on the Gromov boundary), and this boundary point does not depend on $a$ by Lemma~\ref{lemma:unicorn_paths_different_loops}. 

Conversely, let $p$ be a point on the boundary.  According to Remark $2.16$ of \cite{Kapovich}, there is a $(1,10\delta)$-geodesic path $(x_i)$ in the equivalence class of $p$.  Since $\partial \tilde{S}$ is compact, after passing to a subsequence we may assume that $x_i$ cover-converges to an element $l \in \partial \tilde{S}$.  By Lemma~\ref{lemma:quasi_geodesic_to_long_ray}, $l$ must be a loop-filling ray.  We claim that $l$ is a high-filling ray.  To see this, we observe that by Lemma~\ref{lemma:unicorn_paths_close_to_geo}, $P(x_0, x_i)$ stays uniformly close to the geodesic $[x_0, x_i]$, whose length goes to infinity.  By Lemma~\ref{lemma:unicorns_k_begins}, the length of $P(x_0,l)$ must then be infinite.  By Lemma~\ref{lemma:unbounded_iff_high_filling}, we must have $l$ be high-filling.  By Lemma~\ref{lemma:unicorn_paths_INFINITE_geodesics_close}, we have that $P(x_0,l)$ is a bounded distance from $(x_i)$.  Hence the equivalence class of $P(x_0,l)$ is the same as $(x_i)$, which is $p$, as desired.

We have proved that every high-filling ray has a unicorn path which produces a point on the boundary, and we have shown that for every boundary point, there is a high-filling ray whose unicorn path produces it. To complete the proof, we must show that two high-filling rays produce the same boundary point if and only if they are in the same connected component (which is a clique by Theorem~\ref{theorem:completed_components}).  The combination of Lemmas~\ref{lemma:unicorn_paths_disjoint_rays} and~\ref{lemma:unicorn_paths_close_implies_disjoint} shows that two high-filling rays are disjoint if and only if their unicorn paths stay a bounded distance apart i.e. if and only if the have the same equivalence class as boundary points.
\end{proof}

Following Theorem~\ref{theorem:boundary_bijection}, we can define a bijection between cliques in $\RGC$ and the boundary of $\RG$.  Let $\HC$ be the set of high-filling rays, and let $\E$ be the set of cliques of high-filling rays in the graph $\RGC$.  If $l$ is a high-filling ray, we let $[l]$ denote the clique in $\E$ containing $l$.  By Theorem~\ref{theorem:boundary_bijection}, 
we can define the map $F: \E \to \partial\RG$ by $F([l]) = [P(a,l)]$, and $F$ is a bijection.  That is, we take a clique in $\E$ to the equivalence class of $P(a,l)$ on the boundary, for any $l$ in the clique and any loop $a$.

\subsection{Topologies}

In this section, we define topologies on both the high-filling rays and the Gromov
boundary of the ray graph.  The topology on the high-filling rays induces a 
topology on the set of cliques in $\RGC$, and it the space $\RGC$ endowed with this
topology which we will show is homeomorphic to the Gromov boundary.
See Figure~\ref{figu:quotient} for an outline of the situation.

\begin{figure}[htb]
\labellist
\pinlabel $\HC$ at 10 120
\pinlabel $q$ at 60 90
\pinlabel $\E$ at 10 40
\pinlabel $F$ at 133 31
\pinlabel $\tilde F$ at 140 118
\pinlabel $\RG$ at 205 45
\pinlabel $\partial \RG$ at 187 95
\endlabellist
\centering
\includegraphics[scale=1]{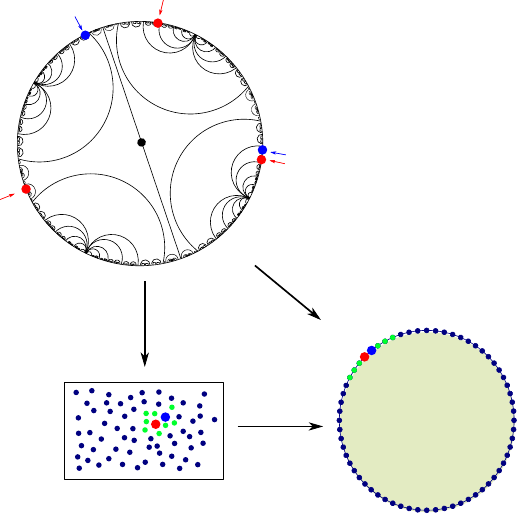}
\caption{The space $\HC$ of high-filling rays can be thought of as a subset of the boundary of the conical cover.  In the quotient $\E$, cliques are close if any of their constituent rays are close.  We will show $F: \E \to \partial \RG$ is a homeomorphism.}
\label{figu:quotient}
\end{figure}

\subsubsection{Topology on the set of high-filling cliques}

There is a natural topology on the set $\HC$ of high-filling rays given by the definition of $k$-beginning-like: for every $l \in \HC$ and every $k \in \N$, we define the $k$-neighborhood $N(l,k)$ of $l$ as the set of high-filling rays which $k$-begin like $l$. The $N(l,k)$'s are a basis of open sets for $\HC$. Note that they are also closed.  Also note that this topology is the same as the topology induced by thinking about the high-filling rays as a subset of the boundary of the conical cover $\partial \tilde{S}$, i.e. as a subset of the circle.

There is a quotient map $q: \HC \to \E$ defined by $q(l) = [l]$ taking a ray $l$ to its clique.  We define the topology on $\E$ to be the quotient topology of $\HC$ under the map $q$.  Note that there is a lift of this map $\tilde{F}:\HC \to \partial\RG$ defined by $\tilde{F}(l) = [P(a,l)]$.  Clearly we have $\tilde{F} = F \circ q$.  See Figure~\ref{figu:quotient}.

\subsubsection{Topology on the Gromov boundary}

There is a standard topology on the Gromov boundary, as follows.
For every $s \in \partial \RG$ and for every $r \in \N$, we denote by $U(s,r)$ the set of boundary points $t$ such that there exist two sequences $(x_n)$ and $(y_n)$ such that $s=[(x_n)]$, $t=[(y_n)]$ and ${\liminf}_{i,j \rightarrow \infty} (x_i \cdot y_j)_p \geq r$. 

The collection $\{U(s,r)|r>0\}$ define a basis of open neighborhoods for $s$ (see~\cite{Kapovich}). 

\begin{remark} Note that if $t \in U(s,r)$, then for \emph{any} sequences $(x_n)$ and $(y_n)$ such that $s=[(x_n)]$, $t=[(y_n)]$, we have $\liminf_{i,j \rightarrow \infty} (x_i \cdot y_j)_p \geq r-2 \delta$.
\end{remark} 

The remark follows from the fact that for any $x,y,z,p$ in a $\delta$-hyperbolic space, we have $(x \cdot y)_p \geq \min \{(x \cdot z)_p,(y \cdot z)_p\}.$

It is helpful to relate this topology to the usual topology on the boundary of the conical cover.  

\begin{lemma}\label{lemma:cover_convergence_gives_boundary_convergence}
Let $(l_i)$ be a sequence of high-filling rays which cover-converges to a high-filling ray $l$.  Then $[P(a,l_i)]$ converges to $[P(a,l)]$ in the topology on the Gromov boundary $\partial \RG$.
\end{lemma}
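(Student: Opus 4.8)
The plan is to show that for every $r \in \N$, the neighborhood $U([P(a,l)], r)$ of $[P(a,l)]$ contains $[P(a,l_i)]$ for all sufficiently large $i$; this is exactly convergence in the Gromov-boundary topology. Fix a basepoint for the Gromov product computations, which we may as well take to be $a$ itself (the definition is basepoint-independent up to bounded error). Write $(y_n) = P(a,l)$ and $(y_n^{(i)}) = P(a,l_i)$. By Lemma~\ref{lemma:unbounded_iff_high_filling}, since $l$ is high-filling, $(y_n)$ goes to infinity in $\RG$, so given $r$ we can choose $m_0$ with $d(a, y_n) > r + C$ for all $n \ge m_0$, where $C$ is a constant (roughly $13 + 2\delta$) absorbing the defect between the Gromov product based at $a$ and the distance to a geodesic, together with the Hausdorff bound between geodesics and unicorn paths from Lemma~\ref{lemma:unicorn_paths_close_to_geo}.

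First I would use Lemma~\ref{lemma:unicorns_k_begins}: since $l_i$ cover-converges to $l$, for each fixed $k$ there is an $I_k$ such that for all $i \ge I_k$ the unicorn paths $P(a,l)$ and $P(a,l_i)$ share their first $k$ terms. Choosing $k$ large enough that the common initial segment reaches past $y_{m_0}$ (and noting $P(a,l_i)$ also goes to infinity, as $l_i$ is high-filling), we conclude that for $i$ large, both sequences contain a long common initial stretch of loops $y_0 = y_0^{(i)}, y_1 = y_1^{(i)}, \dots$, all eventually at distance $> r + C$ from $a$. The key point is then the standard computation: for $n, n'$ large indices of terms lying on this common initial segment, the Gromov product $(y_n \cdot y_{n'}^{(i)})_a$ is, up to the additive constant $C$, at least $d(a, P(y_n, y_{n'}^{(i)}))$, and since $P(y_n, y_{n'}^{(i)})$ is a subpath of the common unicorn path (using Lemma~\ref{lemma:infinite_restrict_to_finite} / Lemma~\ref{lemma:subpaths_of_unicorn_paths} for indices differing by more than $2$), every loop on it is at distance $> r + C$ from $a$; hence $(y_n \cdot y_{n'}^{(i)})_a > r$. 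Taking $\liminf$ over $n, n' \to \infty$ through indices on the common segment shows $[P(a,l_i)] \in U([P(a,l)], r)$, exactly the argument structure already used in Lemma~\ref{lemma:high_filling_to_infinity}.

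The main obstacle is the uniformity: the common initial segment between $P(a,l)$ and $P(a,l_i)$ has length tending to infinity only as $i \to \infty$, so I cannot fix one index range and work with all $i$ at once. The fix is to interleave the quantifiers correctly: given $r$, first pick $m_0$ from $l$ being high-filling; then pick $k$ from the requirement that the $k$-th unicorn term of $P(a,l)$ lies beyond distance $r + C$ (here I must also invoke that $P(a,l_i)$ itself escapes to infinity, so its tail terms beyond the common segment are also far from $a$ — otherwise a short common segment would not control the $\liminf$ for $P(a,l_i)$); then Lemma~\ref{lemma:unicorns_k_begins} supplies $I_k$, and for $i \ge I_k$ the estimate goes through. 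A secondary technical point, already flagged in the proof of Lemma~\ref{lemma:quasi_geodesic_to_long_ray}, is that unicorn paths need not be genuine paths with adjacent vertices disjoint at the level of the *loop graph* metric in the degenerate finite case, but here high-filling guarantees the paths are genuinely infinite, so this does not arise. Once these quantifiers are arranged, the argument is a routine $\delta$-hyperbolic Gromov-product chase identical in spirit to Lemmas~\ref{lemma:high_filling_to_infinity} and~\ref{lemma:unicorn_paths_INFINITE_geodesics_close}.
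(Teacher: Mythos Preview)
Your proposal is correct and follows essentially the same approach as the paper: cover-convergence forces $l_i$ to $k$-begin like $l$ for $k\to\infty$, hence $P(a,l_i)$ and $P(a,l)$ share arbitrarily long initial segments, and the Gromov products go to infinity. The paper's own proof is a terse three-sentence version of exactly this; you have simply written out the Gromov-product chase that the paper leaves implicit. One small correction: the lemma you want for the first step is Lemma~\ref{lemma:cover_converge_iff_k_begin} (cover-convergence $\Leftrightarrow$ $k$-beginning for long rays), not Lemma~\ref{lemma:unicorns_k_begins}, whose hypothesis is a sequence of \emph{loops} rather than long rays---though the conclusion you draw (agreement of the first $k$ unicorn terms) follows from the $k$-beginning statement by exactly the same construction-of-unicorn-paths reasoning.
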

\begin{proof}
By Lemma~\ref{lemma:cover_converge_iff_k_begin}, as $i\to\infty$, $l_i$ $k$-begins like $l$ for $k\to\infty$.  Hence $P(a,l_i)$ and $P(a,l)$ agree for the first $n$ terms, for $n \to\infty$.  Thus the Gromov product of these two sequences must to go infinity.
\end{proof}

\subsection{High-filling rays and neighborhoods}

It will be important for us to be able to relate the notion of ``close'' in terms of rays, i.e. $k$-beginning-like or cover-converging, to the notions of ``close'' in the Gromov boundary.  This is the key step in relating the two topologies defined above. First, we show that being close in the ray graph translates to being close as rays.

\begin{lemma}\label{lemma:control_of_neighborhoods_of_geodesics_general_m} Let $l$ be a high-filling ray. Let $m\in \N$. For every $k \in \N$, there exists $N \in \N$ such that every loop at distance at most $m$ from a loop which $N$-begins like~$l$ $k$-begins like one of the rays of the clique of $l$. 
\end{lemma}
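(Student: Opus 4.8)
The plan is to argue by contradiction, using the compactness of the boundary $\partial\tilde{S}$ of the conical cover. Suppose the conclusion fails: then there is a fixed $k\in\N$ such that for every $N\in\N$ one can find a loop $c'_N$ which $N$-begins like $l$ and a loop $c_N$ with $d(c_N,c'_N)\le m$, yet $c_N$ does not $k$-begin like any ray in the clique $[l]$ associated to $l$. For each $N$, choose a path $c_N = c_N^0, c_N^1,\ldots, c_N^m = c'_N$ of loops of length exactly $m$ (padding by repeating $c'_N$ if the actual distance is smaller), with $c_N^j$ and $c_N^{j+1}$ disjoint for every $j$.

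The key step is to extract limits. Since $\partial\tilde{S}$ is compact, a diagonal argument over the finitely many indices $j=0,\ldots,m$ produces a subsequence of the $N$'s along which every sequence $(c_N^j)_N$ cover-converges; call the limits $l_0,\ldots,l_m$. By Lemma~\ref{lemma:simple_is_compact} each $l_j$ is a simple short ray, loop, or long ray, and by Lemma~\ref{lemma:cover_converge_limit} consecutive $l_j,l_{j+1}$ are disjoint (or equal), so $l_0,\ldots,l_m$ is a walk in $\RGC$. Because $c_N^m = c'_N$ $N$-begins like $l$ with $N\to\infty$, Lemma~\ref{lemma:cover_converge_iff_k_begin} forces $l_m = l$. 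Hence $l_0$ lies in the connected component of $l$ in $\RGC$, which by Lemma~\ref{lemma:cliques} (equivalently, by Theorem~\ref{theorem:completed_components}) is precisely the clique $[l]$ of high-filling rays; in particular $l_0$ is a long ray lying in $[l]$.

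To finish: along the chosen subsequence $c_N = c_N^0$ cover-converges to the long ray $l_0\in[l]$, so Lemma~\ref{lemma:cover_converge_iff_k_begin}, applied with our fixed $k$, says $c_N$ $k$-begins like $l_0$ for all large $N$ in the subsequence. This contradicts the defining property of $c_N$, and the lemma follows.

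I expect the middle step --- showing that the subsequential limits $l_0,\ldots,l_m$ genuinely assemble into a walk in $\RGC$ ending at $l$ --- to be the main obstacle, since everything then hinges on invoking Lemma~\ref{lemma:cliques} to conclude $l_0 \in [l]$: this is exactly where the hypothesis that $l$ is high-filling, and the disjointness of consecutive loops along the paths, are used. A minor point to verify is that the length-$m$ padding is harmless: a constant sequence of loops cover-converges to that loop, and a repeated vertex in the walk $l_0,\ldots,l_m$ simply collapses without affecting connectivity in $\RGC$.
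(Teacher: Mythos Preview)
Your proof is correct and follows essentially the same approach as the paper: argue by contradiction, extract cover-convergent subsequences along the length-$m$ paths to obtain a walk in $\RGC$ ending at $l$, and invoke the clique structure of the component of $l$ to derive a contradiction. The only cosmetic difference is that you place the loop which $N$-begins like $l$ at the end of the path rather than the beginning; your derivation of the final contradiction via Lemma~\ref{lemma:cover_converge_iff_k_begin} is in fact slightly more explicit than the paper's.
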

\begin{proof}
Let $k$ be given, and suppose towards a contradiction that for all $N>0$, there are loops $l_N, l'_N$ such that:
\begin{enumerate}
\item The distance between $l_N$ and $l'_N$ is at most $m$.
\item $l_N$ $N$-begins like $l$
\item $l'_N$ does not $k$-begin like any ray in the clique of $l$.
\end{enumerate}
We now apply the same idea as the proof of Lemma~\ref{lemma:unicorn_paths_close_implies_disjoint}.
For each $N$, we can write the path between $l_N$ and $l'_N$ as
\[
l_N = l_N^0, l_N^1, \ldots, l_N^m = l'_N
\]
By taking a subsequence, we can assume that each sequence $(l_N^i)_N$ cover-converges to some ray or loop $l_\infty^i$.  Note that $l_\infty^0 = l$ by the construction of $l_N$.  By Lemma~\ref{lemma:cover_converge_limit}, this produces a path in the completed ray graph:
\[
l = l_\infty^0, \ldots, l_\infty^m
\]
By the construction of $l'_N$, we have that $l_\infty^m$ cannot $k$-begin like any ray in the clique of $l$.  But by Theorem \ref{theorem:completed_components}, since $l$ is high-filling, we must actually have $l_\infty^m$ \emph{in} the clique of $l$.  This is a contradiction.
\end{proof}

Next, we generalize this result to the boundary, showing that being close on the boundary means being close as rays.  The following lemma says that for any point $P$ on the boundary, if $Q$ is close to $P$ in the boundary, then for any high-filling ray representative $\lambda'$ of $Q$, there is a high-filling ray representative $\lambda$ of $P$ which begins like $\lambda'$.

\begin{proposition}\label{prop:neighborhoods_on_boundary}
Let $P \in \partial \RG$. For every $k \in \N$, there exists $N>0$ such that for every $Q \in U(P,N)$, for every $\lambda' \in \tilde{F}^{-1}(Q)$, there exists $\lambda \in \tilde{F}^{-1}(P)$ which $k$-begins like $\lambda'$.
\end{proposition}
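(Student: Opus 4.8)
The plan is to bridge Lemma~\ref{lemma:control_of_neighborhoods_of_geodesics_general_m} (``close in $\RG$ implies close as rays'') with the standard fact that two quasi-geodesics representing nearby boundary points fellow-travel for a long time. First I would fix a loop $a$ and a ray $\lambda_0 \in \tilde{F}^{-1}(P)$, so that $P = [P(a,\lambda_0)]$, and set $(x_n) := P(a,\lambda_0)$; for a given $\lambda' \in \tilde{F}^{-1}(Q)$ set $(y_n) := P(a,\lambda')$, so that $Q = [P(a,\lambda')]$. Since $\lambda_0$ and $\lambda'$ are high-filling, Lemma~\ref{lemma:unbounded_iff_high_filling} says both paths go to infinity in the loop graph, and Lemma~\ref{lemma:unicorn_paths_close_to_geo} says each tracks a geodesic. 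Combining this with the definition of $U(P,N)$ and the remark that $\liminf_{i,j}(x_i\cdot y_j)_a \geq N - 2\delta$ when $Q\in U(P,N)$, standard hyperbolic geometry produces a constant $m = m(\delta)$ with the following property: for every $N$, if $Q\in U(P,N)$ then every element $y_n$ of $P(a,\lambda')$ with $d(a,y_n)\leq N-m$ lies within distance $m$ of some element $x$ of $P(a,\lambda_0)$, and then also $d(a,x)\geq d(a,y_n)-m$.

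Next, given $k$, I would set up the relevant thresholds, all depending only on $P$, $a$, $k$, $\delta$. Let $p$ be the number of equatorial segments crossed by $a$. Extending Lemma~\ref{lemma:uinicorn_path_begin_like_b} to infinite unicorn paths (via Lemma~\ref{lemma:infinite_restrict_to_finite}, using that ``$k$-begins like'' is monotone along a unicorn path), there is $I_1 = I_1(a,k)$ with $y_n$ $k$-beginning like $\lambda'$ for all $n\geq I_1$. Applying Lemma~\ref{lemma:control_of_neighborhoods_of_geodesics_general_m} with $l=\lambda_0$, this $m$, and this $k$ gives $N_1$ such that any loop at distance at most $m$ from a loop which $N_1$-begins like $\lambda_0$ must $k$-begin like one of the rays of the clique of $\lambda_0$, i.e. of $\tilde{F}^{-1}(P)$. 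The same extension of Lemma~\ref{lemma:uinicorn_path_begin_like_b} also gives $I_0 = N_1\lceil p/2\rceil + 1$ with $x_i$ $N_1$-beginning like $\lambda_0$ for all $i\geq I_0$; since the finitely many loops $x_0,\dots,x_{I_0-1}$ all lie within some distance $R^*$ of $a$, every element of $P(a,\lambda_0)$ at distance more than $R^*$ from $a$ automatically $N_1$-begins like $\lambda_0$. Finally I would choose $N > I_1 + m$ and $N > R^* + 2m + 1$.

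With this $N$ fixed, take any $Q\in U(P,N)$ and any $\lambda'\in\tilde{F}^{-1}(Q)$. Since consecutive elements of $(y_n)$ are disjoint loops, $|d(a,y_{n+1})-d(a,y_n)|\leq 1$ and $d(a,y_n)\leq n$; together with $d(a,y_n)\to\infty$ and the choice of $N$, a discrete intermediate-value argument starting at index $I_1$ yields an index $n^*\geq I_1$ with $R^*+m < d(a,y_{n^*}) \leq N-m$ (if $d(a,y_{I_1})$ already exceeds $R^*+m$ then it is at most $I_1 < N-m$; otherwise one walks forward from index $I_1$, overshooting $R^*+m$ by at most $1$). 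Then $y_{n^*}$ $k$-begins like $\lambda'$ because $n^*\geq I_1$; by fellow-traveling there is $x\in P(a,\lambda_0)$ with $d(x,y_{n^*})\leq m$ and $d(a,x)\geq d(a,y_{n^*})-m>R^*$, so $x$ $N_1$-begins like $\lambda_0$; hence by Lemma~\ref{lemma:control_of_neighborhoods_of_geodesics_general_m} the loop $y_{n^*}$ $k$-begins like some $\lambda\in\tilde{F}^{-1}(P)$. Since $y_{n^*}$ $k$-begins like both $\lambda'$ and $\lambda$, and ``$k$-begins like'' only records the first $k$ equatorial crossings with their directions, $\lambda$ $k$-begins like $\lambda'$, which is the desired conclusion.

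The hard part is not any single estimate but the bookkeeping in the last step: one needs a single loop $y_{n^*}$ of $P(a,\lambda')$ that is simultaneously far enough along the unicorn path to $k$-begin like $\lambda'$, close enough to $a$ to stay inside the fellow-traveling window $d(a,\cdot)\leq N-m$, and far enough from $a$ that any nearby loop of $P(a,\lambda_0)$ is forced to $N_1$-begin like $\lambda_0$. This works precisely because every threshold ($m$, $I_1$, $N_1$, $R^*$) depends only on $P$, $a$, $k$ and $\delta$ and not on $Q$ or $\lambda'$, so a single $N$ serves uniformly; the existence of $n^*$ is then just the observation that $d(a,y_n)$ moves in unit steps and tends to infinity.
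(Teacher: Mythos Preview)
Your proof is correct and follows essentially the same strategy as the paper: combine Lemma~\ref{lemma:control_of_neighborhoods_of_geodesics_general_m} with the fellow-travelling of unicorn paths (via Lemma~\ref{lemma:unicorn_paths_close_to_geo} and the Gromov-product bound from $U(P,N)$) and Lemma~\ref{lemma:uinicorn_path_begin_like_b} to locate a single loop on $P(a,\lambda')$ that simultaneously $k$-begins like $\lambda'$ and lies near a loop of $P(a,\lambda_0)$ that $N_1$-begins like $\lambda_0$. The only notable difference is that the paper shortcuts your intermediate-value bookkeeping by choosing the basepoint loop $a$ to cross the equator exactly once, so that the index along a unicorn path directly controls how far it begins like the target ray; your version works for an arbitrary $a$ at the cost of the extra thresholds $I_1$, $R^*$ and the discrete intermediate-value step.
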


\begin{proof} Consider $P \in \partial \RG$ and choose $k \in \N$.
Let $\lambda \in \tilde{F}^{-1}(P)$. Note that $\lambda$ is high-filling, and $[P(a,\lambda)]=P$, and the clique of high-filling rays disjoint from $\lambda$ is $\tilde{F}^{-1}(P)$.  These are all consequences of Theorem~\ref{theorem:boundary_bijection}.

According to Lemma \ref{lemma:control_of_neighborhoods_of_geodesics_general_m} with $m=2\delta + 20$, there exists $N' \in \N$ such that every loop at distance less than $2 \delta +20$ from a loop which $N'$-begins like $\lambda$ must $k$-begin like one of the rays of $\tilde{F}^{-1}(P)$. Let us prove that $N:=N'+2\delta$ satisfies the proposition.

Pick a loop $a$ which has only one intersection point with the equator. Denote by $(x_n)$ the infinite unicorn path $P(a,\lambda)$. Because $a$ has only one intersection with the equator, we have that for every $n\geq 1$, $x_n$ at least $(n-1)$-begins like $\lambda$ by Lemma~\ref{lemma:uinicorn_path_begin_like_b}. Hence $x_{N'+1}$ $N'$-begins like $\lambda$.

Choose $Q \in U(P,N'+2\delta)$ and $\lambda' \in \tilde{F}^{-1}(Q)$. Again by Theorem~\ref{theorem:boundary_bijection}, we have $\lambda'$ high filling and $[P(a,\lambda')]=Q$.  Denote by $(y_n)$ the unicorn path $P(a,\lambda')$. By definition of $U(P,N'+2\delta)$, we have $\liminf_{i,j \rightarrow \infty} (x_i \cdot y_j)_a \geq N'$. Choose $i>N'+1$ and $j \in \N$ such that $(x_i \cdot y_j)_a \geq N'$: any two geodesics $[a,x_i]$ and $[a,y_j]$ have their $N'$ first terms $(2 \delta)$-Hausdorff close.

Because geodesics and unicorn paths are uniformly close (Lemma~\ref{lemma:unicorn_paths_close_to_geo}), it follows that some point of $(y_n)$, say $y_p$, is in the $2\delta+20$ neighborhood of $x_{N'+1}$. According to Lemma \ref{lemma:control_of_neighborhoods_of_geodesics_general_m}, $y_p$ $k$-begins like one of the rays in $\tilde{F}^{-1}(P)$. Thus $\lambda'$ also $k$-begins like one of the rays in $\tilde{F}^{-1}(P)$. 
\end{proof}

\subsection{Homeomorphism}

In this section, we show that $F:\E \to \partial \RG$ is a homeomorphism, given the two topologies we have defined above.  

\begin{lemma}\label{lemma:F_continuous}
The map $\tilde F = F\circ q : \HC \rightarrow \partial \RG$ is continuous.  Consequently, $F$ is continuous.
\end{lemma}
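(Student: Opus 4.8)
The plan is to show that $\tilde F : \HC \to \partial\RG$ is continuous by checking the standard $\epsilon$--$\delta$ formulation directly on basic open sets: given a high-filling ray $l$ and a basic neighborhood $U(\tilde F(l), r)$ of its image, we must produce $N \in \N$ such that every high-filling ray $l'$ which $N$-begins like $l$ (i.e. $l' \in N(l,N)$, the basic neighborhood of $l$ in $\HC$) satisfies $\tilde F(l') \in U(\tilde F(l), r)$. Continuity of $F$ then follows formally: since $q$ is a quotient map and $\tilde F = F \circ q$ is continuous, $F$ is continuous by the universal property of the quotient topology.

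First I would fix $l$, set $P = \tilde F(l) = [P(a,l)]$ for a chosen loop $a$ (independent of $a$ by Lemma~\ref{lemma:unicorn_paths_different_loops}), and fix $r \in \N$. I would choose the loop $a$ to have exactly one intersection with the equator, so that by Lemma~\ref{lemma:uinicorn_path_begin_like_b} the term $x_n$ of the unicorn path $P(a,l) = (x_n)$ at least $(n-1)$-begins like $l$; this gives a quantitative handle converting "$l'$ agrees with $l$ for a long time" into "$P(a,l')$ agrees with $P(a,l)$ for a long time." Concretely, if $l'$ $N$-begins like $l$, then by Lemma~\ref{lemma:cover_convergence_k_begins} (and the definition of unicorn paths, as in Lemma~\ref{lemma:unicorns_k_begins}) the unicorn paths $P(a,l)$ and $P(a,l')$ share their first $\sim N$ terms. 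Two sequences that agree on arbitrarily long initial segments have infinite $\liminf$ Gromov product, so by taking $N$ large enough we force $\liminf_{i,j\to\infty}(x_i \cdot y_j)_p \ge r$, where $(y_n) = P(a,l')$; that is exactly the condition $\tilde F(l') \in U(P,r)$. This is essentially the content of Lemma~\ref{lemma:cover_convergence_gives_boundary_convergence}, which already handles the sequential version; I would either invoke it directly (noting $\HC$, as a subset of the circle, is metrizable, so sequential continuity suffices) or repeat its short argument in the open-set language.

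The one point requiring care — and the main obstacle — is that $P(a,l')$ could be a \emph{finite} sequence if $l'$ intersects $a$ only finitely many times; but a high-filling ray is loop-filling by Lemma~\ref{lemma:loop_filling_ray_filling}, so it meets every loop, in particular $a$, and moreover $P(a,l')$ is unbounded by Lemma~\ref{lemma:unbounded_iff_high_filling}, so this degeneracy does not occur on $\HC$. A second subtlety is making the passage "$l'$ $N$-begins like $l$ $\Rightarrow$ the first $k(N)$ unicorn terms coincide" uniform in $l'$; this follows because the recursive construction of $a_{n+1}$ from $a_n$ in the definition of the unicorn path depends only on the equator-crossing combinatorics of $l$ up to the relevant point (the first intersection of $l$ with $a_n$ lies within a bounded initial segment of $l$, controlled by the finitely many equator crossings of $a_n$, which in turn grow in a controlled way along the path). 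With these observations in place the argument is a short chase; I would keep the writeup to the $\epsilon$--$\delta$ translation plus the two remarks above, deferring to Lemmas~\ref{lemma:uinicorn_path_begin_like_b}, \ref{lemma:unbounded_iff_high_filling}, and \ref{lemma:cover_convergence_gives_boundary_convergence} for the technical inputs.
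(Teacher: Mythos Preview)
Your proposal is correct and takes essentially the same approach as the paper. In fact, the paper's proof is precisely the short route you mention as an alternative: it observes that convergence in $\HC$ is cover-convergence, invokes Lemma~\ref{lemma:cover_convergence_gives_boundary_convergence} to get convergence in $\partial\RG$ (sequential continuity suffices since $\HC$ sits in the circle), and then deduces continuity of $F$ from the quotient-topology universal property; your additional $\epsilon$--$\delta$ translation and the remarks about $P(a,l')$ being infinite are fine but not needed once that lemma is cited.
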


\begin{proof}
Consider a sequence of high-filling rays $l_n$ which converges to $l$ in the topology on $\HC$.  That is, for all $k$ there is $N$ such that for all $n>N$, we have $l_n$ $k$-begins like $l$.  This is equivalent to $l_n$ cover-converging to $l$, so applying Lemma~\ref{lemma:cover_convergence_gives_boundary_convergence}, we conclude that $([P(a,l_n)])_n$ converges to $[P(a,l)]$ in the Gromov topology, i.e. $(\tilde{F}(l_n))_n$ converges to $\tilde{F}(l)$.

The quotient map $F$ is then continuous by the definition of the quotient topology. \end{proof}

\begin{lemma}\label{lemma:F_open}
The map $F : \E \to \partial \RG$ is open.
\end{lemma}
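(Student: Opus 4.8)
The plan is to reduce openness of $F$ to a statement about its lift $\tilde F = F\circ q$ and then feed in Proposition~\ref{prop:neighborhoods_on_boundary}. Since $q\colon\HC\to\E$ is a quotient map and $F$ is a bijection, for any open $W\subseteq\E$ the set $V:=q^{-1}(W)$ is open in $\HC$, is $q$-saturated (if $l\in V$ then $\tilde F^{-1}(\tilde F(l))=q^{-1}(q(l))\subseteq V$), and $F(W)=\tilde F(V)$ because $q$ is surjective. So it suffices to show that $\tilde F(V)$ is open in $\partial\RG$ for every open $q$-saturated $V\subseteq\HC$.

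So I would fix $P\in\tilde F(V)$, say $P=\tilde F(l)$ with $l\in V$; by saturation the whole clique $\tilde F^{-1}(P)$ (which by Theorem~\ref{theorem:boundary_bijection} is the connected component / clique of high-filling rays containing $l$) is contained in $V$. The first key step is to extract a \emph{uniform} beginning-length. For each $\lambda\in\tilde F^{-1}(P)$ the sets $N(\lambda,\cdot)$ form a neighborhood basis at $\lambda$, so there is $k_\lambda$ with $N(\lambda,k_\lambda)\subseteq V$; since each $N(\lambda,k_\lambda)$ is open in $\HC$ and $\tilde F^{-1}(P)$ is compact by Lemma~\ref{lemma:cliques_compact}, finitely many $N(\lambda_1,k_1),\dots,N(\lambda_j,k_j)$ cover $\tilde F^{-1}(P)$, and I set $k:=\max_i k_i$. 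The second key step is to apply Proposition~\ref{prop:neighborhoods_on_boundary} with this $k$, obtaining $N>0$ such that every $Q\in U(P,N)$ and every $\lambda'\in\tilde F^{-1}(Q)$ admit some $\lambda\in\tilde F^{-1}(P)$ which $k$-begins like $\lambda'$.

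To finish, take any $Q\in U(P,N)$ and any $\lambda'\in\tilde F^{-1}(Q)$ (nonempty since $F$ is onto); get $\lambda\in\tilde F^{-1}(P)$ which $k$-begins like $\lambda'$. Since the $N(\lambda_i,k_i)$ cover $\tilde F^{-1}(P)$, we have $\lambda\in N(\lambda_{i_0},k_{i_0})$ for some $i_0$, so $\lambda$ and $\lambda_{i_0}$ agree on their first $k_{i_0}\le k$ equatorial crossings; as $\lambda$ and $\lambda'$ agree on their first $k$ crossings, $\lambda'$ and $\lambda_{i_0}$ agree on the first $k_{i_0}$ crossings, i.e. $\lambda'\in N(\lambda_{i_0},k_{i_0})\subseteq V$. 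Hence $q(\lambda')=[\lambda']\in W$ and $Q=\tilde F(\lambda')=F([\lambda'])\in F(W)$. Thus $U(P,N)\subseteq F(W)$, proving that $F(W)=\tilde F(V)$ is open.

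The main obstacle I expect is not a single estimate but lining up the quantifiers: the clique $\tilde F^{-1}(P)$ can contain many rays with genuinely different beginnings, so one cannot get away with a single $N(l,k)$, and it is precisely compactness of cliques (Lemma~\ref{lemma:cliques_compact}) together with the fact that the $N(\lambda,k)$ are clopen that lets us upgrade the conclusion of Proposition~\ref{prop:neighborhoods_on_boundary} (``$k$-begins like \emph{some} ray of the clique'') into ``lies in $V$''. One should also keep in mind the trivial monotonicity/transitivity of ``$k$-begins like'' on initial equatorial crossings, which powers the final chaining step.
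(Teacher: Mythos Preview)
Your proof is correct and follows essentially the same route as the paper: lift to the saturated open set $q^{-1}(\mathcal U)\subset\HC$, use compactness of the clique (Lemma~\ref{lemma:cliques_compact}) to extract a uniform $k$ from a finite subcover by basic sets $N(\lambda_i,k_i)$, apply Proposition~\ref{prop:neighborhoods_on_boundary} at that $k$, and chain through the transitivity of ``$k$-begin like'' to land $\lambda'$ in one of the $N(\lambda_i,k_i)$. The only difference is cosmetic packaging (you phrase the first reduction as ``$\tilde F$ sends saturated opens to opens''), not mathematical content.
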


\begin{proof}
Consider an open set $\mathcal{U} \subset \E$. Let us show that $F(\mathcal{U})$ is open in $\partial X$. Let $P \in F(\mathcal{U})$ be a point of the boundary. We denote by $R=F^{-1}(P) \in \E$ the preimage of $P$. We want to prove that some neighborhood of $P$ is included in $F(\mathcal{U})$.

 Because $\mathcal{U}$ is open, by definition of the quotient topology, this means that $q^{-1}(\mathcal{U})$ is open in $\mathcal{H}$, where $q : \HC \to \E$ is the quotient from the set of high-filling rays to the set of cliques, as previously defined. It follows that for every high-filling ray $\lambda$ in $q^{-1}(R)$, there exists $k(\lambda)>0$ such that $N(\lambda,k(\lambda)) \subset q^{-1}(\mathcal{U})$, i.e.:
$$ \bigcup_{\lambda \in R} N(\lambda,k(\lambda)) \subset q^{-1}(\mathcal{U}).$$
According to Lemma \ref{lemma:cliques_compact},  the clique $R$ is compact as a subset of the boundary of the conical cover, and since each set $N(\lambda, k(\lambda))$, thought of as a subset of the (circle) boundary of the conical cover is open (the intersection of an open interval with the set of high-filling rays), it follows that there exists a finite family $(\lambda_i)_{1\leq i \leq n}$ of rays in $R$ such that:
$$R \subset \bigcup_{1\leq i \leq n} N(\lambda_i,k(\lambda_i)).$$

Note for any ray $\mu \in R$, we must have $\mu \in N(\lambda_i, k(\lambda_i))$ for some $i$.
Set $k:=\max_{1\leq i \leq n}{k(\lambda_i)}$.  
According to Proposition~\ref{prop:neighborhoods_on_boundary}, there exists $N$ such that for every $Q\in U(P,N)$, for every $\lambda' \in \tilde{F}^{-1}(Q)$, there exists $\mu \in R$ which $k$-begins like $\lambda'$.

Now we have:
\begin{enumerate}
\item $\lambda'$ must $k$-begin like $\mu$ for $\mu \in R$.
\item Since $\mu \in R$, we have $\mu \in N(\lambda_i,k(\lambda_i))$ for some $i$, so $\mu$ $k(\lambda_i)$-begins like $\lambda_i$.
\item $k > k(\lambda_i)$, so as $\lambda'$ $k$-begins like $\mu$, and $\mu$ $k(\lambda_i)$ begins like $\lambda_i$, we have $\lambda'$ $k(\lambda_i)$-begins like $\lambda_i$.
\end{enumerate}
That is, $\lambda' \in N(\lambda_i, k(\lambda_i))$.  We specifically constructed this set so that $N(\lambda_i, k(\lambda_i)) \subset q^{-1}(\mathcal{U})$.  Hence, $\lambda' \in q^{-1}(\mathcal{U})$.

We have shown that for all $Q \in U(P,N)$ and for all rays $\lambda' \in \tilde{F}^{-1}(Q)$, we have $\lambda' \in q^{-1}(\mathcal{U})$.  That is, $Q \in F(\mathcal{U})$.  We conclude that $U(P,N) \subset F(\mathcal{U})$.  Since $P \in F(\mathcal{U})$ was arbitrary and $U(P,N)$ is an open set, we conclude that $F(\mathcal{U})$ is open.  Hence $F$ is open.
\end{proof}

\begin{theorem}\label{theorem:boundary_homeo}
The map $F : \E \to \partial \RG$ is a homeomorphism.
\end{theorem}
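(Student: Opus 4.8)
The plan is simply to assemble the three facts already established. By Theorem~\ref{theorem:boundary_bijection}, the map $F \colon \E \to \partial \RG$ is a bijection; by Lemma~\ref{lemma:F_continuous}, $F$ is continuous; and by Lemma~\ref{lemma:F_open}, $F$ is open. A continuous open bijection between topological spaces is a homeomorphism: the inverse $F^{-1}$ is continuous precisely because $F$ is open, since for open $\mathcal{U}\subseteq\E$ the preimage $(F^{-1})^{-1}(\mathcal{U}) = F(\mathcal{U})$ is open. Hence $F$ is a homeomorphism and nothing further is required.

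If one prefers a more self-contained account rather than citing the packaging lemmas, the two substantive ingredients are the following. Continuity reduces, via the definition of the quotient topology on $\E$ through $q\colon\HC\to\E$, to continuity of $\tilde F = F\circ q$ on $\HC$; and this is exactly the content of Lemma~\ref{lemma:cover_convergence_gives_boundary_convergence}, namely that cover-convergence of high-filling rays (equivalently, sharing longer and longer initial equatorial segments) forces the associated unicorn-path sequences to have Gromov product tending to infinity, hence forces convergence of the boundary points. Openness is where the work lies: given an open $\mathcal{U}\subseteq\E$ and $P\in F(\mathcal{U})$ with clique $R = F^{-1}(P)$, one uses compactness of $R$ as a subset of $\partial\tilde S$ (Lemma~\ref{lemma:cliques_compact}) to cover $R$ by finitely many basic neighborhoods $N(\lambda_i,k(\lambda_i))$ lying inside $q^{-1}(\mathcal{U})$, sets $k = \max_i k(\lambda_i)$, and then invokes Proposition~\ref{prop:neighborhoods_on_boundary} to produce $N$ so that every $Q\in U(P,N)$ has \emph{all} of its high-filling ray representatives $k$-beginning like some ray of $R$, hence lying in one of the $N(\lambda_i,k(\lambda_i))\subseteq q^{-1}(\mathcal{U})$; this gives $U(P,N)\subseteq F(\mathcal{U})$.

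The main obstacle, then, is not in the present theorem at all but was already handled in Lemma~\ref{lemma:F_open} and its key input Proposition~\ref{prop:neighborhoods_on_boundary}: translating ``closeness in the Gromov boundary'' (a statement about Gromov products of quasi-geodesic sequences of loops) into ``closeness as rays on $\partial\tilde S$'' (a statement about sharing long initial segments), while controlling the fact that a bounded-length path in the loop graph starting near a high-filling ray can only terminate near a ray of the same clique --- a rigidity coming from Theorem~\ref{theorem:completed_components}, i.e. the fact that a clique is its own connected component in $\RGC$. Granting those results, the homeomorphism statement follows at once by the continuous-open-bijection principle, so the proof of Theorem~\ref{theorem:boundary_homeo} is a one-line consequence of Lemmas~\ref{lemma:F_continuous} and~\ref{lemma:F_open} together with Theorem~\ref{theorem:boundary_bijection}.
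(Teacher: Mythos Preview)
Your proposal is correct and matches the paper's own proof essentially verbatim: the paper's argument is the single sentence that $F$ is a bijection (Theorem~\ref{theorem:boundary_bijection}), continuous (Lemma~\ref{lemma:F_continuous}), and open (Lemma~\ref{lemma:F_open}), hence a homeomorphism. Your additional paragraphs unpacking the content of Lemmas~\ref{lemma:F_continuous} and~\ref{lemma:F_open} are accurate summaries of those proofs but are not needed for Theorem~\ref{theorem:boundary_homeo} itself.
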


\begin{proof}
We have proved that $F$ is a bijection (Theorem~\ref{theorem:boundary_bijection}) which is continuous (Lemma~\ref{lemma:F_continuous}) and open (Lemma~\ref{lemma:F_open}).
\end{proof}


\begin{thebibliography}{99}
\bibitem{Juliette}
	Juliette Bavard, 
	\emph{Hyperbolicit\'{e} du graphe des rayons et quasi-morphismes sur un gros groupe modulaire}, 
	Geometry \& Topology {\bf 20} (2016) 491�-535
\bibitem{Beguin}
	F. B\'{e}guin, S. Crovisier, and F. Le Roux,
	\emph{Construction of curious minimal uniquely ergodic homeomorphisms on manifolds: the Denjoy-Rees technique}.
	Annales Scientifiques de lÕ\'{E}cole Normale Sup\'{e}rieure {\bf 40}, no. 2, 2007, 251--308.
\bibitem{Bonk-Schramm}
	Mario Bonk, Oded Schramm
   \emph{Embeddings of Gromov hyperbolic spaces},
   Geom. Funct. Anal., {\bf 10} (2000) 266--306
\bibitem{Bridson-Hafliger}
	Martin Bridson and Andr\'{e} H\"{a}fliger,
	Metric Spaces of Non-positive Curvature, Grundlehren der Mathematischen Wissenschaften 319 (1999)
\bibitem{Calegari-blog}
	Danny Calegari,
	\emph{Big mapping class groups and dynamics},
	\texttt{https://lamington.wordpress.com/2009/06/22/big-mapping-class-groups-and-dynamics/}
\bibitem{Funar-K-S}
	Louis Funar, Christophe Kapoudjian, and Vlad Sergiescu,
	\emph{Asymptotically rigid mapping class groups and Thompson groups},
    Handbook of Teichmuller theory vol III (Editor A.Papadopoulos), 2012, 595-664.
\bibitem{Hensel-Przytycki-Webb}
	Sebastian Hensel, Piotr Przytycki and Richard Webb,
	\emph{1-slim triangles and uniform hyperbolicity for arc graphs and curve graphs}, J. Eur. Math. Soc. (JEMS), {\bf{17}} (2015) 755--762
\bibitem{Kapovich}
	Ilya Kapovich and Nadia Benakli,
	Boundaries of hyperbolic groups. \emph{Combinatorial and geometric group theory}, Contemp. Math., 296, Amer. Math. Soc., Providence, RI, 2002.
\bibitem{Klarreich}
	Erica Klarreich,
	\emph{The boundary at infinity of the curve complex and the relative Teichm\"uller Space},
	preprint
\bibitem{Le Roux}
	Fr\'ed\'eric Le Roux,
	\emph{\'Etude topologique de l'espace des hom\'eomorphismes de Brouwer}, Topology 40(5):1051--1087 (2001).	
\bibitem{Moise}
	Edwin E. Moise,
	Geometric topology in dimensions {$2$} and {$3$}, Graduate Texts in Mathematics, Vol. 47, Springer-Verlag, New York-Heidelberg (1977)
\bibitem{Pho-On}
	Witsarut Pho-on, \emph{Infinite Unicorn Paths and Gromov Boundaries}, arXiv:1508.02296
\end{thebibliography}
\end{document}